\newtheorem{theorem}{Theorem}[section]
\newtheorem{lemma}[theorem]{Lemma}
\newtheorem{corollary}[theorem]{Corollary}
\theoremstyle{definition}
\newtheorem{definition}[theorem]{Definition}
\theoremstyle{remark}
\newtheorem{remark}[theorem]{Remark}
\begin{document}

\title{Dynamic Hardy type inequalities via alpha-conformable derivatives on time scales}

\author{Ahmed A. El-Deeb$^{1}$\\
{\tt \small ahmedeldeeb@azhar.edu.eg}
\and Samer D. Makharesh$^{1}$\\
{\tt \small sameeermakarish@yahoo.com}
\and Delfim F. M. Torres$^{2}$\\
{\tt \small delfim@ua.pt}}

\date{$^{1}$Department of Mathematics, Faculty of Science,\\ 
Al-Azhar University, 11884 Nasr City, Cairo, Egypt\\[0.3cm]
${^2}$\mbox{Center for Research and Development in Mathematics and Applications (CIDMA),}
Department of Mathematics, University of Aveiro, 3810-193 Aveiro, Portugal}

\maketitle


\begin{abstract}
We prove new Hardy-type $\alpha$-conformable dynamic inequalities on time scales. 
Our results are proved by using Keller's chain rule, the integration by parts formula, 
and the dynamic H\"{o}lder inequality on time scales. When $\alpha=1$, then we obtain 
some well-known time-scale inequalities due to Hardy. As special cases, we obtain new 
continuous, discrete, and quantum inequalities. 

\medskip

\noindent \textbf{MSC}: 26D10, 26D15, 26E70.

\noindent \textbf{Keywords}: Hardy inequalities; 
dynamic inequalities; time-scale inequalities.
\end{abstract}


\section{Introduction}

Hardy (1877--1947) established in 1920 a now classical discrete inequality.

\begin{theorem}[See \cite{b2}]
\label{thm:1.1}	
Consider the nonnegative sequence of real numbers 
$\{\varrho(\imath)\}_{\imath=1}^\infty$. 
For $p > 1$, one has
\begin{equation}
\label{a1}
\sum_{\imath=1}^{\infty}\frac{1}{\imath^p}\left(\sum_{\jmath=1}^{\imath}
\varrho(\jmath)\right)^p\leq \left(\frac{p}{p-1}\right)^p
\sum_{\imath=1}^{\infty}\varrho^p(\imath).
\end{equation}
\end{theorem}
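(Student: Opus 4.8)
The plan is to carry out Elliott's classical argument: reduce \eqref{a1} to one pointwise estimate that telescopes upon summation, followed by a single application of H\"older's inequality. First I would set $\Lambda(\imath)=\sum_{\jmath=1}^{\imath}\varrho(\jmath)$ and introduce the averages $\xi(\imath)=\Lambda(\imath)/\imath$, with the convention $\xi(0)=0$; then the left-hand side of \eqref{a1} equals $\sum_{\imath\ge1}\xi^p(\imath)$, while $\varrho(\imath)=\imath\,\xi(\imath)-(\imath-1)\,\xi(\imath-1)$ for every $\imath\ge1$. If $\sum_{\imath\ge1}\varrho^p(\imath)=\infty$ there is nothing to prove, so I may assume this sum is finite; it then suffices to bound $S_N:=\sum_{\imath=1}^{N}\xi^p(\imath)$ (a finite sum of finite terms) uniformly in $N$ and let $N\to\infty$.

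The core step is the pointwise inequality
\[
\xi^p(\imath)-\frac{p}{p-1}\,\xi^{p-1}(\imath)\,\varrho(\imath)
\le\frac{1}{p-1}\Bigl[(\imath-1)\,\xi^p(\imath-1)-\imath\,\xi^p(\imath)\Bigr],
\qquad \imath\ge1.
\]
To establish it I would substitute $\varrho(\imath)=\imath\,\xi(\imath)-(\imath-1)\,\xi(\imath-1)$, expand, and apply Young's inequality to the resulting cross term in the sharp form $\xi^{p-1}(\imath)\,\xi(\imath-1)\le\frac{p-1}{p}\,\xi^p(\imath)+\frac1p\,\xi^p(\imath-1)$; the coefficients then collapse exactly onto the telescoping right-hand side. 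Summing over $\imath=1,\dots,N$, the right-hand side telescopes to $-\frac{N}{p-1}\xi^p(N)\le0$ (this is where the convention $\xi(0)=0$ enters), which gives
\[
S_N\le\frac{p}{p-1}\sum_{\imath=1}^{N}\xi^{p-1}(\imath)\,\varrho(\imath).
\]

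Next I would apply H\"older's inequality with conjugate exponents $p$ and $p/(p-1)$ to the sum on the right, obtaining
\[
\sum_{\imath=1}^{N}\xi^{p-1}(\imath)\,\varrho(\imath)
\le S_N^{(p-1)/p}\,\Bigl(\sum_{\imath=1}^{N}\varrho^p(\imath)\Bigr)^{1/p}.
\]
Combining the last two displays yields $S_N\le\frac{p}{p-1}\,S_N^{(p-1)/p}\bigl(\sum_{\imath=1}^{N}\varrho^p(\imath)\bigr)^{1/p}$; if $S_N=0$ the bound is trivial, and otherwise I divide by $S_N^{(p-1)/p}$ and raise to the $p$-th power to obtain $S_N\le\bigl(\tfrac{p}{p-1}\bigr)^p\sum_{\imath=1}^{N}\varrho^p(\imath)\le\bigl(\tfrac{p}{p-1}\bigr)^p\sum_{\imath=1}^{\infty}\varrho^p(\imath)$. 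Letting $N\to\infty$ gives \eqref{a1}.

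The main obstacle is the pointwise step: it requires precise algebraic bookkeeping so that the error term produced by Young's inequality combines with the leading terms into a clean telescoping difference, which in turn forces the particular weights $(p-1)/p$ and $1/p$ in the Young split. The remaining ingredients---telescoping, a single H\"older estimate, and the monotone passage to the limit---are routine once that estimate is in hand.
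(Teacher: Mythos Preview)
Your argument is correct: this is Elliott's classical proof of Hardy's discrete inequality, and the algebra in the pointwise step checks out exactly (substituting $\varrho(\imath)=\imath\,\xi(\imath)-(\imath-1)\,\xi(\imath-1)$ and using Young with weights $(p-1)/p$ and $1/p$ collapses the left side to $\frac{1}{p-1}[(\imath-1)\xi^p(\imath-1)-\imath\,\xi^p(\imath)]$ on the nose).

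However, there is no comparison to be made with the paper's proof, because the paper does not prove this statement. Theorem~\ref{thm:1.1} appears in the introduction as a cited classical result (Hardy, 1920, reference~\cite{b2}); the paper merely states it as historical background and moves on. The original contributions of the paper are the $\alpha$-conformable time-scale inequalities in Section~3, which are proved by integration by parts, Keller's chain rule, and the dynamic H\"older inequality---a quite different set of tools from the discrete telescoping you use here.
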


In 1928, by using the calculus of variations, Hardy 
introduced the continuous version of inequality \eqref{a1}.

\begin{theorem}[See \cite{b3}]
\label{thm:1.2}	
Let $\eta$ be a nonnegative continuous function on $[0,\infty)$. 
If $p > 1$, then 
\begin{equation}\label{a2}
\int_{0}^{\infty}\frac{1}{\pi^p}\left(\int_{0}^{\pi}\eta(s)ds\right)^p d\pi
\leq \left(\frac{p}{p-1}\right)^p\int_{0}^{\infty}\eta^p(\pi)d\pi.
\end{equation}
Moreover, the constant $\left(\frac{p}{p-1}\right)^p$ in \eqref{a2} is sharp.
\end{theorem}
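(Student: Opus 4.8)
The plan is to follow the classical three-move strategy that is the continuous prototype of the time-scale argument advertised in the abstract: rewrite the outer weight $\pi^{-p}$ as a derivative so that integration by parts applies, use the (Keller/ordinary) chain rule to differentiate a power of the primitive, then collapse the resulting integral back onto the quantity being estimated via H\"older's inequality, and finally solve the ensuing algebraic inequality.

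First I would set $F(\pi):=\int_0^\pi \eta(s)\,ds$, so that $F$ is nonnegative, nondecreasing, absolutely continuous on $(0,\infty)$, $F(0)=0$, and $F'(\pi)=\eta(\pi)$. Since $\dfrac{1}{\pi^p}=\dfrac{1}{1-p}\,\dfrac{d}{d\pi}\,\pi^{1-p}$ for $p>1$, integration by parts on $[a,b]\subset(0,\infty)$ gives
\[
\int_a^b \frac{F^p(\pi)}{\pi^p}\,d\pi
=\frac{1}{1-p}\Bigl[\pi^{1-p}F^p(\pi)\Bigr]_a^b
+\frac{p}{p-1}\int_a^b \pi^{1-p}F^{p-1}(\pi)\,\eta(\pi)\,d\pi .
\]
I would then let $a\to 0^+$ and $b\to\infty$ and argue that the boundary term vanishes: near $0$ one uses $F(\pi)\le \pi\sup_{[0,\pi]}\eta$, while at $\infty$ one first works under the assumption that the right-hand side of \eqref{a2} is finite and extracts a sequence along which $\pi^{1-p}F^p(\pi)\to 0$ (a standard truncation-and-limit reduction also covers the case where the right-hand side is infinite, in which there is nothing to prove). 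This justification of the vanishing boundary terms is the one genuinely delicate point, and it is the step I expect to require the most care.

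Granting that, and noting that $\bigl(F^p(\pi)/\pi^p\bigr)^{(p-1)/p}=\pi^{1-p}F^{p-1}(\pi)$, I obtain
\[
\int_0^\infty \frac{F^p(\pi)}{\pi^p}\,d\pi
=\frac{p}{p-1}\int_0^\infty \Bigl(\frac{F^p(\pi)}{\pi^p}\Bigr)^{\frac{p-1}{p}}\eta(\pi)\,d\pi ,
\]
and applying H\"older's inequality with conjugate exponents $\frac{p}{p-1}$ and $p$ to the right-hand side yields
\[
\int_0^\infty \frac{F^p(\pi)}{\pi^p}\,d\pi
\le \frac{p}{p-1}\left(\int_0^\infty \frac{F^p(\pi)}{\pi^p}\,d\pi\right)^{\frac{p-1}{p}}
\left(\int_0^\infty \eta^p(\pi)\,d\pi\right)^{\frac1p}.
\]
Writing $I$ for the left-hand side and assuming first $0<I<\infty$, I divide by $I^{(p-1)/p}$ and raise to the $p$th power to get $I\le\bigl(\tfrac{p}{p-1}\bigr)^p\int_0^\infty\eta^p$, which is \eqref{a2}; the degenerate cases $I=0$ and $I=\infty$ are disposed of by the truncation remark above.

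For the sharpness claim I would exhibit a near-extremal family, e.g. $\eta_\varepsilon(\pi)=\pi^{-1/p-\varepsilon}$ supported on $[1,\infty)$ and $0$ on $[0,1)$, compute both sides of \eqref{a2} explicitly as elementary integrals in $\varepsilon$, and verify that the ratio of left side to right side tends to $\bigl(\tfrac{p}{p-1}\bigr)^p$ as $\varepsilon\to 0^+$, so that no constant smaller than $\bigl(\tfrac{p}{p-1}\bigr)^p$ can make \eqref{a2} hold for all admissible $\eta$.
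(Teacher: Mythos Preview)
The paper does not supply its own proof of Theorem~\ref{thm:1.2}: it is quoted as a classical result from \cite{b3}, and the inequality \eqref{a2} is later recovered only as the special case $\alpha=1$ of \eqref{eqcor103}, itself a corollary of Theorem~\ref{thm1}. Your direct argument is correct and is precisely the continuous $\alpha=1$ specialization of the proof of Theorem~\ref{thm1}: write the weight as a derivative, integrate by parts, control the boundary term, and close with H\"older with exponents $p$ and $p/(p-1)$. One small simplification: since $\tfrac{1}{1-p}\,b^{1-p}F^p(b)\le 0$, you may simply drop the boundary term at $b$ and obtain the inequality directly, without extracting a subsequence. Your treatment of sharpness via the family $\eta_\varepsilon=\pi^{-1/p-\varepsilon}\mathbf 1_{[1,\infty)}$ is the standard one and is not addressed anywhere in the paper.
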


In 1927, Hardy and Littlewood (1885--1977) proved a discrete inequality 
that is an extension of \eqref{a1}.

\begin{theorem}[See \cite{b4}]
\label{thm:1.3}
Consider the sequence of nonnegative real numbers $\{\varrho(\imath)\}_{\imath=1}^\infty$.
\begin{description}
\item[($i$)] For $p > 1$ and $\alpha > 1$, one has
\begin{equation} 
\label{a3}
\sum_{\imath=1}^{\infty}\frac{1}{\imath^\alpha}\bigg(
\sum_{\jmath=1}^{\imath}\varrho(\jmath)\bigg)^p 
\leq \c{C}(\alpha,p)\sum_{\imath=1}^{\infty}
\frac{1}{\imath^{\alpha-p}}\varrho^p(\imath);
\end{equation}
\item[($ii$)] For $p>1$ and $\alpha < 1$, one has
\begin{equation}  
\label{a4}
\sum_{\imath=1}^{\infty}\frac{1}{\imath^\alpha}\bigg(\sum_{\jmath=\imath}^{\infty}
\varrho(\jmath)\bigg)^p \leq \c{C}(\alpha,p)\sum_{\imath=1}^{\infty}
\frac{1}{\imath^{\alpha-p}} \varrho^p(\imath);
\end{equation}
\end{description}
where the $\c{C}(\alpha,p)\geq0$ in the inequalities \eqref{a3}
and \eqref{a4} depend on $\alpha$ and $p$.
\end{theorem}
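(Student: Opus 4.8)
\emph{Proof proposal.}
The plan is to derive both inequalities from the classical weighted Hölder-and-Fubini device of Hardy and Littlewood; I spell out part~($i$) and only indicate the parallel changes needed for part~($ii$). One may assume the right-hand side of \eqref{a3} is finite, for otherwise there is nothing to prove; then every series below converges and, all terms being nonnegative, any interchange of the two summations is automatically legitimate. Fix the conjugate exponent $q=p/(p-1)$, so that $p/q=p-1$, and introduce a real parameter $\beta$ to be chosen at the end.

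First I would write $\sum_{\jmath=1}^{\imath}\varrho(\jmath)=\sum_{\jmath=1}^{\imath}\bigl(\varrho(\jmath)\,\jmath^{\beta}\bigr)\jmath^{-\beta}$, apply the discrete Hölder inequality with exponents $p$ and $q$, and raise to the power $p$, obtaining
\[
\left(\sum_{\jmath=1}^{\imath}\varrho(\jmath)\right)^{p}\le\left(\sum_{\jmath=1}^{\imath}\varrho^{p}(\jmath)\,\jmath^{\beta p}\right)\left(\sum_{\jmath=1}^{\imath}\jmath^{-\beta q}\right)^{p-1}.
\]
The next ingredient is the elementary power-sum bound $\sum_{\jmath=1}^{\imath}\jmath^{-\beta q}\le c_{1}\,\imath^{\,1-\beta q}$, valid when $\beta q<1$ with an explicit $c_{1}=c_{1}(\beta q)$ (compare the sum with an integral). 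Substituting it, multiplying by $\imath^{-\alpha}$, summing over $\imath\ge1$ and interchanging the order of summation gives
\[
\sum_{\imath=1}^{\infty}\frac{1}{\imath^{\alpha}}\left(\sum_{\jmath=1}^{\imath}\varrho(\jmath)\right)^{p}\le c_{1}^{\,p-1}\sum_{\jmath=1}^{\infty}\varrho^{p}(\jmath)\,\jmath^{\beta p}\sum_{\imath=\jmath}^{\infty}\imath^{-\alpha+(1-\beta q)(p-1)}.
\]
A companion estimate controls the inner tail: whenever $-\alpha+(1-\beta q)(p-1)<-1$, i.e. $\beta q>\tfrac{p-\alpha}{p-1}$, one has $\sum_{\imath=\jmath}^{\infty}\imath^{-\alpha+(1-\beta q)(p-1)}\le c_{2}\,\jmath^{\,1-\alpha+(1-\beta q)(p-1)}$ with an explicit $c_{2}=c_{2}(\alpha,p,\beta)$.

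The decisive point is that the exponent of $\jmath$ then collapses to a value independent of $\beta$: since $q(p-1)=p$,
\[
\beta p+\bigl(1-\alpha+(1-\beta q)(p-1)\bigr)=\beta p-\beta q(p-1)+p-\alpha=p-\alpha=-(\alpha-p).
\]
Consequently the right-hand side is $\le c_{1}^{\,p-1}c_{2}\sum_{\jmath=1}^{\infty}\jmath^{-(\alpha-p)}\varrho^{p}(\jmath)$, which is exactly \eqref{a3} with the constant there equal to $c_{1}^{\,p-1}c_{2}$. It only remains to choose $\beta$ so that $\tfrac{p-\alpha}{p-1}<\beta q<1$; this interval is nonempty precisely because $\alpha>1$ makes $\tfrac{p-\alpha}{p-1}<1$, and one may take, say, $\beta q=\tfrac12\bigl(1+\max\{0,\tfrac{p-\alpha}{p-1}\}\bigr)$. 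For part~($ii$) I would instead write $\varrho(\jmath)=\bigl(\varrho(\jmath)\,\jmath^{\gamma}\bigr)\jmath^{-\gamma}$, apply Hölder over $\sum_{\jmath\ge\imath}$, and use $\sum_{\jmath\ge\imath}\jmath^{-\gamma q}\le c_{1}'\,\imath^{\,1-\gamma q}$ (valid for $\gamma q>1$) together with $\sum_{\imath=1}^{\jmath}\imath^{-\alpha+(1-\gamma q)(p-1)}\le c_{2}'\,\jmath^{\,1-\alpha+(1-\gamma q)(p-1)}$ (valid when $-\alpha+(1-\gamma q)(p-1)>-1$); the identity $q(p-1)=p$ again makes the $\jmath$-exponent equal $-(\alpha-p)$, and the constraints $1<\gamma q<\tfrac{p-\alpha}{p-1}$ are jointly satisfiable exactly because $\alpha<1$.

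I do not expect a genuine obstacle here: the interchange of summations is free thanks to positivity, the Hölder step is routine, and the cancellation producing the weight $\jmath^{-(\alpha-p)}$ is forced by $q(p-1)=p$. The only places that need a little care are the preliminary reduction to a finite right-hand side and phrasing the two power-sum estimates so that the final constant depends on nothing beyond $\alpha$ and $p$ (through the explicitly fixed $\beta$, resp. $\gamma$).
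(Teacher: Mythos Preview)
Your argument is the classical Hardy--Littlewood weighted H\"older-and-Fubini device and is correct as written: the constraints $\tfrac{p-\alpha}{p-1}<\beta q<1$ (resp.\ $1<\gamma q<\tfrac{p-\alpha}{p-1}$) are consistent precisely under the hypotheses $\alpha>1$ (resp.\ $\alpha<1$), and the identity $q(p-1)=p$ forces the exponent of $\jmath$ on the right to be $p-\alpha$ regardless of the auxiliary parameter.

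There is, however, nothing in the paper to compare your proof against. Theorem~\ref{thm:1.3} appears only in the introduction as a cited background result from Hardy and Littlewood (reference~\cite{b4}); the authors neither prove it nor sketch an argument, and the paper's own contributions concern $\alpha$-conformable dynamic inequalities on time scales proved by an entirely different mechanism (integration by parts, Keller's chain rule, and the dynamic H\"older inequality). So your proposal stands on its own as a correct proof of a theorem the paper merely quotes.
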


In \cite{b4}, the authors also studied the continuous analogous
of Theorem~\ref{thm:1.3}.

\begin{theorem}[See \cite{b4}]
\label{thm:1.4}	
Let $\eta$ be a nonnegative continuous function on $[0,\infty)$. 
If $p > 1$, then 
\begin{equation*}
\int_{0}^{\infty}\bigg(\frac{1}{\pi}\int_{\pi}^{\infty}\eta(s)ds\bigg)^p d\pi \leq
p^p\int_{0}^{\infty}\eta^p(\pi)d\pi
\end{equation*}
or, by a trivial transformation,
\begin{equation}  
\label{a11}
\int_{0}^{\infty}\bigg(\int_{\pi}^{\infty}\eta(s)ds\bigg)^p d\pi \leq
p^p\int_{0}^{\infty}\pi^p\eta^p(\pi)d\pi.
\end{equation}
\end{theorem}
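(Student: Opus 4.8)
I would establish the labeled inequality \eqref{a11} directly; the first displayed inequality then follows from it by the ``trivial transformation'' indicated in the statement (cf.\ \cite{b4}). We may assume $\int_0^\infty \pi^p\eta^p(\pi)\,d\pi<\infty$, since otherwise there is nothing to prove. Put \[G(\pi):=\int_\pi^\infty \eta(s)\,ds ,\] a nonnegative, nonincreasing, locally absolutely continuous function with $G(\infty)=0$, $G'(\pi)=-\eta(\pi)$, and hence, by the chain rule, $\bigl(G^p\bigr)'(\pi)=p\,G(\pi)^{p-1}G'(\pi)=-p\,G(\pi)^{p-1}\eta(\pi)$.

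The core of the argument is an integration by parts followed by Hölder's inequality. Integrating by parts gives \[\int_0^\infty G(\pi)^p\,d\pi=\bigl[\pi\,G(\pi)^p\bigr]_0^\infty+p\int_0^\infty \pi\,G(\pi)^{p-1}\eta(\pi)\,d\pi ,\] and, once the boundary term is dealt with (see below), this becomes the identity $\int_0^\infty G^p\,d\pi=p\int_0^\infty \pi\,G^{p-1}\eta\,d\pi$. Factoring the integrand on the right as $\bigl(\pi\,\eta(\pi)\bigr)\,G(\pi)^{p-1}$ and applying Hölder's inequality with conjugate exponents $p$ and $p/(p-1)$ yields \[\int_0^\infty G(\pi)^p\,d\pi\le p\left(\int_0^\infty \pi^p\eta^p(\pi)\,d\pi\right)^{\!1/p}\left(\int_0^\infty G(\pi)^p\,d\pi\right)^{\!(p-1)/p}.\] Cancelling the common factor $\bigl(\int_0^\infty G^p\,d\pi\bigr)^{(p-1)/p}$ (the degenerate values $0$ and $+\infty$ being treated separately) and raising to the $p$-th power produces $\int_0^\infty G^p\,d\pi\le p^p\int_0^\infty \pi^p\eta^p(\pi)\,d\pi$, i.e.\ \eqref{a11}.

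The step that genuinely needs care is the boundary term $\pi\,G(\pi)^p$ as $\pi\to\infty$, together with the tacit assumption that $\int_0^\infty G^p$ is finite; handling these directly with $G(\pi)=\int_\pi^\infty\eta$ is delicate. The clean remedy is to run the estimate on a finite window $[\varepsilon,X]\subset(0,\infty)$ with $G$ replaced by the truncation $G_X(\pi):=\int_\pi^X\eta(s)\,ds$: now $G_X(X)=0$ removes the outer boundary term outright, the remaining boundary contribution $-\varepsilon\,G_X(\varepsilon)^p$ is $\le 0$ and is discarded, and the computation above goes through to give $\int_\varepsilon^X G_X(\pi)^p\,d\pi\le p^p\int_0^\infty \pi^p\eta^p(\pi)\,d\pi$, with a bound independent of $\varepsilon$ and $X$. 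Letting $X\to\infty$ and then $\varepsilon\to 0^+$ and invoking the monotone convergence theorem twice ($G_X\uparrow G$, then $[\varepsilon,\infty)\uparrow(0,\infty)$) upgrades this to \eqref{a11}. The three tools used — the chain rule, the integration by parts formula, and Hölder's inequality — are exactly those listed in the abstract, so the $\alpha$-conformable, time-scale versions of the theorem should follow the same blueprint after replacing ordinary calculus by $\alpha$-conformable calculus on time scales (Keller's chain rule, the time-scale integration by parts, and the dynamic Hölder inequality).
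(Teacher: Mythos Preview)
Your argument is correct and is the standard classical proof of \eqref{a11}: integrate $G^p$ by parts against $d\pi$, discard the nonpositive boundary contribution, apply H\"older with exponents $p$ and $p/(p-1)$, and absorb the factor $(\int G^p)^{(p-1)/p}$. The truncation $G_X$ on $[\varepsilon,X]$ is exactly the right device to make the boundary term and the finiteness of $\int G^p$ honest.

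As for comparison with the paper: Theorem~\ref{thm:1.4} is not proved in the paper at all---it is quoted from \cite{b4} as background. What the paper does instead is recover \eqref{a11} as a degenerate special case of its own $\alpha$-conformable time-scale machinery: Theorem~\ref{thm2} is proved by the $\alpha$-conformable integration by parts \eqref{parts}, the chain rule \eqref{chain1}, and the dynamic H\"older inequality \eqref{holder}; specializing to $\mathbb{T}=\mathbb{R}$ (Corollary~\ref{cor5}), then taking $k=v=w=r=g\equiv1$, $a=0$, $\gamma=p$, and finally $\alpha=1$, collapses the general inequality to \eqref{a11}. So the paper's route to \eqref{a11} is exactly the blueprint you sketched in your last paragraph, carried out first at the general level and then specialized, rather than your direct classical computation. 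The underlying mechanism---integration by parts followed by H\"older and a self-absorbing cancellation---is identical; the only difference is that the paper works with the weighted quantities $G$, $F$, $w$, $v$, $k$ and the $\alpha$-conformable operators throughout, while you work with the bare $G(\pi)=\int_\pi^\infty\eta$ and ordinary calculus.
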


Hardy studied the integral form of \eqref{a3} and \eqref{a4} as follows.

\begin{theorem}[See \cite{b3}]
Consider the continuous function $\eta\geq0$ on $[0,\infty)$.
\begin{description}
\item[($i$)] For $\alpha>1$ and $p>1$,  we have
\begin{equation}\label{a5}
\int_{0}^{\infty}\frac{1}{\pi^\alpha}\bigg(\int_{0}^{\pi}\eta(s)ds\bigg)^pd\pi 
\leq \Big(\frac{p}{\alpha-1}\Big)^p\int_{0}^{\infty}\frac{1}{\pi^{\alpha-p}}\eta^p(\pi)d\pi.
\end{equation}

\item[($ii$)] For $\alpha<1$ and $p>1$, we have
\begin{equation}
\label{a6}
\int_{0}^{\infty}\frac{1}{\pi^\alpha}\bigg(\int_{\pi}^{\infty}\eta(s)ds\bigg)^pd\pi 
\leq \Big(\frac{p}{1-\alpha}\Big)^p\int_{0}^{\infty}\frac{1}{\pi^{\alpha-p}}\eta^p(\pi)d\pi.
\end{equation}
\end{description}
\end{theorem}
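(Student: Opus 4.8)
The plan is to prove part (i) in detail and obtain part (ii) by an essentially identical argument with one sign change. First I would set $F(\pi):=\int_0^\pi\eta(s)\,ds$, so that $F$ is nonnegative, nondecreasing, absolutely continuous, $F(0)=0$, and $F'(\pi)=\eta(\pi)$ almost everywhere. I may assume the right-hand side $\int_0^\infty\pi^{p-\alpha}\eta^p(\pi)\,d\pi$ is finite, since otherwise \eqref{a5} holds trivially; this finiteness, combined with Hölder's inequality, also controls the growth of $F$ near the two endpoints and is what will make the boundary terms below vanish.

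Next I would integrate by parts on a finite interval $[\varepsilon,R]$, using the chain rule $(F^p)'=pF^{p-1}F'$ (this is precisely the spot where, on a time scale, Keller's chain rule would be invoked):
\begin{equation*}
\int_\varepsilon^R\pi^{-\alpha}F^p(\pi)\,d\pi
=\Big[\tfrac{\pi^{1-\alpha}}{1-\alpha}F^p(\pi)\Big]_\varepsilon^R
-\frac{p}{1-\alpha}\int_\varepsilon^R\pi^{1-\alpha}F^{p-1}(\pi)F'(\pi)\,d\pi .
\end{equation*}
Since $\alpha>1$ we have $1-\alpha<0$, hence $\tfrac{\pi^{1-\alpha}}{1-\alpha}=-\tfrac{1}{(\alpha-1)\pi^{\alpha-1}}$; letting $\varepsilon\to0^+$ and $R\to\infty$ and checking that the boundary contributions tend to $0$, I get
\begin{equation*}
\int_0^\infty\pi^{-\alpha}F^p(\pi)\,d\pi
=\frac{p}{\alpha-1}\int_0^\infty\pi^{1-\alpha}F^{p-1}(\pi)\eta(\pi)\,d\pi .
\end{equation*}

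The final step is to apply the dynamic (here, classical) Hölder inequality with conjugate exponents $\tfrac{p}{p-1}$ and $p$, after the factorization $\pi^{1-\alpha}F^{p-1}\eta=\big(\pi^{-\alpha/p}F\big)^{p-1}\big(\pi^{1-\alpha/p}\eta\big)$, which yields
\begin{equation*}
\int_0^\infty\pi^{1-\alpha}F^{p-1}\eta\,d\pi
\le\Big(\int_0^\infty\pi^{-\alpha}F^p\,d\pi\Big)^{\frac{p-1}{p}}
\Big(\int_0^\infty\pi^{p-\alpha}\eta^p\,d\pi\Big)^{\frac1p}.
\end{equation*}
Writing $I$ for the left-hand side of \eqref{a5}, the last two displays give $I\le\tfrac{p}{\alpha-1}\,I^{(p-1)/p}\big(\int_0^\infty\pi^{p-\alpha}\eta^p\big)^{1/p}$; dividing by $I^{(p-1)/p}$ (legitimate once $I<\infty$, which follows from the finiteness assumption) and raising to the $p$-th power produces the constant $\big(\tfrac{p}{\alpha-1}\big)^p$ and proves \eqref{a5}. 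For \eqref{a6} I would instead set $G(\pi):=\int_\pi^\infty\eta(s)\,ds$, which is nonincreasing with $G'(\pi)=-\eta(\pi)$; now $\alpha<1$ makes $1-\alpha>0$, the same integration by parts and the identical Hölder splitting apply verbatim, and the minus sign coming from $G'$ together with the sign of $1-\alpha$ delivers the constant $\big(\tfrac{p}{1-\alpha}\big)^p$. I expect the only genuinely delicate point — as opposed to routine bookkeeping — to be the vanishing of the boundary terms $\pi^{1-\alpha}F^p(\pi)$ (respectively $\pi^{1-\alpha}G^p(\pi)$) as $\pi\to0^+$ and $\pi\to\infty$; the cleanest way around it is to bound $F(\pi)\le\pi^{(p-1)/p}\big(\int_0^\pi\eta^p\big)^{1/p}$ near the origin and to pass to the limit along suitable sequences $R_n\to\infty$ exploiting the finiteness of the right-hand integral, so that one never needs a boundary limit to exist in a strong pointwise sense.
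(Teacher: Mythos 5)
Your overall strategy (integration by parts, the chain rule $(F^p)'=pF^{p-1}F'$, H\"older with exponents $p$ and $p/(p-1)$, then division by $I^{(p-1)/p}$) is exactly the template the paper uses for its time-scale Theorems~\ref{thm1} and \ref{thm2}, of which \eqref{a5} and \eqref{a6} are recovered as the $\alpha=1$, $\mathbb{T}=\mathbb{R}$ special cases (via \eqref{eqcor102} and \eqref{eqcor66}); the paper itself never proves the classical statement directly but cites Hardy \cite{b3}. However, there is a genuine gap at the very point you flag as delicate: the endpoint estimates. Your proposed bound $F(\pi)\le\pi^{(p-1)/p}\big(\int_0^\pi\eta^p\big)^{1/p}$ is the \emph{unweighted} H\"older estimate, and it gives $\pi^{1-\alpha}F^p(\pi)\le\pi^{p-\alpha}\int_0^\pi\eta^p(s)\,ds$, which is an $\infty\cdot 0$ form whenever $\alpha>p$ and simply does not prove that the boundary term at the origin vanishes; moreover, if one weakens continuity at $0$ to local integrability (the natural setting), for $1<\alpha<p$ the finiteness of $\int_0^\infty s^{p-\alpha}\eta^p\,ds$ does not even force $\eta\in L^p$ near $0$, so the bound is vacuous. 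The correct tool is the \emph{weighted} split $F(\pi)=\int_0^\pi s^{(\alpha-p)/p}\,s^{(p-\alpha)/p}\eta(s)\,ds$, which by H\"older (using $(\alpha-p)/(p-1)>-1$, i.e.\ precisely $\alpha>1$) yields $\pi^{1-\alpha}F^p(\pi)\le\big(\tfrac{p-1}{\alpha-1}\big)^{p-1}\int_0^\pi s^{p-\alpha}\eta^p(s)\,ds\to0$. The mirror estimate at infinity, valid precisely because $\alpha<1$, gives $\pi^{1-\alpha}G^p(\pi)\le\big(\tfrac{p-1}{1-\alpha}\big)^{p-1}\int_\pi^\infty s^{p-\alpha}\eta^p(s)\,ds\to0$ and, as a bonus, shows $G(\pi)<\infty$ — a point your sketch never addresses, and your appeal to ``suitable sequences $R_n\to\infty$'' has no mechanism behind it other than this very estimate. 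Note also that at the opposite endpoints ($R\to\infty$ in (i), $\varepsilon\to0$ in (ii)) the bracket has a favourable sign and can simply be discarded, so no limit is needed there at all.

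The second, smaller gap is the division step: ``$I<\infty$ follows from the finiteness assumption'' is circular, since finiteness of the left-hand side is part of what is being proved. The repair is already implicit in your truncation: run the identity and H\"older on $[\varepsilon,R]$ (or, as in the classical argument, with $F_\varepsilon(\pi)=\int_\varepsilon^\pi\eta$, which kills the lower boundary term outright), where $\int_\varepsilon^R\pi^{-\alpha}F^p\,d\pi$ is manifestly finite, divide there, discard the negatively-signed boundary term, and then let $\varepsilon\to0$ and $R\to\infty$ by monotone convergence. With the weighted endpoint estimates and this truncation argument in place, your proof closes and coincides with the standard one.
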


In the same year of 1928, Copson also extended \eqref{a1}.

\begin{theorem}[See \cite{b12}]
Consider the nonnegative sequences $\{\varrho(\imath)\}_{\imath=1}^\infty$ 
and $\{\varsigma(\imath)\}_{\imath=1}^\infty$ of real numbers. Then,
\begin{equation}
\label{h20}
\sum_{\imath=1}^{\infty}\frac{\varsigma(\imath)\Big(\sum_{\jmath=1}^{\imath}
\varsigma(\jmath)\varrho(\jmath)\Big)^p}{\Big(\sum_{\jmath=1}^{\imath}
\varsigma(\jmath)\Big)^\alpha}\leq \Big(\frac{p}{\alpha-1}\Big)^p
\sum_{\imath=1}^{\infty}\varsigma(\imath)\varrho^p(\imath)\Big(\sum_{\jmath=1}^{\imath}
\varsigma(\jmath)\Big)^{p-\alpha},
\qquad \text{for} \quad p\geq\alpha>1,
 \end{equation}
and
\begin{equation}
\label{h21}
\sum_{\imath=1}^{\infty}\frac{\varsigma(\imath)\Big(\sum_{\jmath=\imath}^{\infty}
\varsigma(\jmath)\varrho(\jmath)\Big)^p}{\Big(\sum_{\jmath=1}^{\imath}g(\jmath)\Big)^\alpha}
\leq \Big(\frac{p}{1-\alpha}\Big)^p\sum_{\imath=1}^{\infty}\varsigma(\imath)
\varrho^p(\imath)\Big(\sum_{\jmath=1}^{\imath}\varsigma(\jmath)
\Big)^{p-\alpha},
\qquad \text{for} \quad p>1>\alpha\geq0.
 \end{equation}
\end{theorem}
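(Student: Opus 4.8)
The plan is to prove inequality \eqref{h20} in full and then to deduce \eqref{h21} by the same scheme, with the inner sum $\sum_{\jmath=1}^{\imath}\varsigma(\jmath)\varrho(\jmath)$ replaced by the tail $\sum_{\jmath=\imath}^{\infty}\varsigma(\jmath)\varrho(\jmath)$ and the hypothesis $\alpha>1$ replaced by $0\le\alpha<1$. Throughout, put $\Lambda(\imath)=\sum_{\jmath=1}^{\imath}\varsigma(\jmath)$ and $\Phi(\imath)=\sum_{\jmath=1}^{\imath}\varsigma(\jmath)\varrho(\jmath)$, so that $\Lambda$ and $\Phi$ are nondecreasing, $\Lambda(0)=\Phi(0)=0$, $\Lambda(\imath)-\Lambda(\imath-1)=\varsigma(\imath)$ and $\Phi(\imath)-\Phi(\imath-1)=\varsigma(\imath)\varrho(\imath)$. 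Since every term is nonnegative, it suffices to bound the partial sums $\sum_{\imath=1}^{N}$ uniformly in $N$ and then let $N\to\infty$. The argument is the discrete counterpart of the classical proof of the integral inequalities \eqref{a5}--\eqref{a6}: integrate by parts, then apply H\"{o}lder.

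I would build everything on two elementary pointwise estimates. The first plays the role of the chain rule: convexity of $t\mapsto t^{p}$ for $p>1$ gives
\[
\Phi(\imath)^{p}-\Phi(\imath-1)^{p}\le p\,\Phi(\imath)^{p-1}\bigl(\Phi(\imath)-\Phi(\imath-1)\bigr)=p\,\Phi(\imath)^{p-1}\varsigma(\imath)\varrho(\imath).
\]
The second telescopes the weight: for $\imath\ge2$, since $t\mapsto t^{-\alpha}$ is decreasing,
\[
\Lambda(\imath-1)^{1-\alpha}-\Lambda(\imath)^{1-\alpha}=(\alpha-1)\int_{\Lambda(\imath-1)}^{\Lambda(\imath)}t^{-\alpha}\,dt\ge(\alpha-1)\,\varsigma(\imath)\,\Lambda(\imath)^{-\alpha},
\]
so that $\varsigma(\imath)\Lambda(\imath)^{-\alpha}\le(\alpha-1)^{-1}\bigl(\Lambda(\imath-1)^{1-\alpha}-\Lambda(\imath)^{1-\alpha}\bigr)$. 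Now I would detach the term $\imath=1$ from the left-hand side of \eqref{h20} (using $\Phi(1)=\varsigma(1)\varrho(1)$ and $\Lambda(1)=\varsigma(1)$ it equals the $\imath=1$ term of the right-hand side, so it is harmless), insert the second estimate into $\sum_{\imath=2}^{N}$, and carry out Abel summation so that the telescoping difference is transferred onto $\Phi(\cdot)^{p}$. Dropping the nonnegative endpoint term at $\imath=N$ and applying the first estimate to the increments $\Phi(\imath)^{p}-\Phi(\imath-1)^{p}$ leaves an upper bound built from the sum $\sum\varsigma(\imath)\varrho(\imath)\,\Phi(\imath)^{p-1}\,\Lambda(\cdot)^{1-\alpha}$ (with the power of $\Lambda$ evaluated at a shifted index) together with a few boundary contributions. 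To this sum I would apply the discrete H\"{o}lder inequality with conjugate exponents $p/(p-1)$ and $p$, splitting $\Lambda^{1-\alpha}=\Lambda^{-\alpha(p-1)/p}\,\Lambda^{(p-\alpha)/p}$ and $\varsigma(\imath)=\varsigma(\imath)^{(p-1)/p}\varsigma(\imath)^{1/p}$; writing $A$ and $B$ for the left- and right-hand sides of \eqref{h20} truncated to $\imath\le N$, this produces $A\le c\,A^{(p-1)/p}B^{1/p}$ with a certain $c$, and dividing by the finite factor $A^{(p-1)/p}$ and raising to the power $p$ gives $A\le c^{p}B$.

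The crux, and the step I expect to be the main obstacle, is to run the previous paragraph so that $c$ comes out equal to $p/(\alpha-1)$ rather than to some larger $\alpha,p$-dependent constant. Two features with no analogue in the continuous proof must be controlled. First, summation by parts inevitably couples an increment at the transition $\imath-1\to\imath$ with the weight evaluated at a single endpoint, so one must absorb the mismatch between $\Lambda(\imath-1)$ and $\Lambda(\imath)$; the hypothesis $p\ge\alpha$ is precisely what allows this, because it renders the exponent $p-\alpha$ nonnegative, whereupon monotonicity of $\Lambda$ lets one replace $\Lambda(\imath-1)^{p-\alpha}$ by $\Lambda(\imath)^{p-\alpha}$ on the right-hand side without loss. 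Second, the weight estimate collapses at $\imath=1$, where $\Lambda(0)^{1-\alpha}=+\infty$; this is exactly why that term is detached at the start and reabsorbed at the end. For \eqref{h21} the same blueprint applies with the tail $\sum_{\jmath\ge\imath}\varsigma(\jmath)\varrho(\jmath)$ in place of $\Phi(\imath)$: here $1-\alpha>0$, so $\sum_{\imath\le k}\varsigma(\imath)\Lambda(\imath)^{-\alpha}$ grows like $\Lambda(k)^{1-\alpha}$ with no singularity at the origin, the boundary term disappears, and the hypothesis $p>1>\alpha\ge0$ takes over the role played before by $p\ge\alpha>1$.
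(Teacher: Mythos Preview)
The paper does not prove this statement: it is quoted from Copson's 1928 paper \cite{b12} as background in the Introduction, with no proof supplied. The closest the paper comes is the remark following Corollary~\ref{cor3} (and the analogous remark after Corollary~\ref{cor7} for \eqref{h21}), where the general time-scale theorems of Section~\ref{sec:3} are specialized to $\mathbb{T}=\mathbb{Z}$ and $\alpha=1$; but the authors themselves describe what emerges only as ``another form of'' \eqref{h20}, with a right-hand side of the shape $\Lambda(t)^{\gamma(p-1)}/\Lambda(t-1)^{p(\gamma-1)}$ rather than $\Lambda(t)^{p-\gamma}$. So strictly speaking there is no paper proof to compare against.

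That said, your outline is the classical direct argument and is sound: Abel summation in place of integration by parts, the convexity bound $\Phi(\imath)^{p}-\Phi(\imath-1)^{p}\le p\,\Phi(\imath)^{p-1}\varsigma(\imath)\varrho(\imath)$ in place of the chain rule, then H\"{o}lder to close the loop $A\le cA^{(p-1)/p}B^{1/p}$. This is exactly the three-step skeleton the paper uses for its own Theorems~\ref{thm1}--\ref{thm4} (the parts formula \eqref{parts}, the chain-rule estimate \eqref{chain1}, then \eqref{holder}), merely transported to the conformable time-scale setting. You have correctly identified the one genuinely discrete obstacle --- the index shift between $\Lambda(\imath-1)$ and $\Lambda(\imath)$ after summation by parts --- and the right cure: $p\ge\alpha$ makes the exponent $p-\alpha$ nonnegative, so monotonicity of $\Lambda$ lets the shifted weight be absorbed into the target form on the right. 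Detaching $\imath=1$ to avoid the $\Lambda(0)^{1-\alpha}$ singularity is also the standard manoeuvre. The plan is correct; what remains is only the bookkeeping you have flagged.
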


In 1970, Leindler discussed the result \eqref{h20} 
when the limit of summation  $\sum_{n=1}^{\infty}r(m)<\infty$ 
changed from $\imath$ to $\infty$.

\begin{theorem}[See \cite{b14}]
\label{leindler}
Consider the nonnegative real numbers sequences 
$\{\varrho(\imath)\}_{\imath=1}^\infty$ 
and $\{\varsigma(\imath)\}_{\imath=1}^\infty$ with
$\sum_{\jmath=\imath}^{\infty}\varsigma(\jmath)<\infty$. 
For $p>1>\alpha\geq0$, one has
\begin{equation}
\label{h30}
\sum_{\imath=1}^{\infty}\frac{\varsigma(\imath)\Big(
\sum_{\jmath=1}^{\imath} \varsigma(\jmath)
\varrho(\jmath)\Big)^p}{\Big(\sum_{\jmath=\imath}^{\infty}
\varsigma(\jmath)\Big)^\alpha}
\leq \Big(\frac{p}{1-\alpha}\Big)^p\sum_{\imath=1}^{\infty}
\varsigma(\imath)\varrho^p (\imath)\Big(\sum_{\jmath=\imath}^{\infty}
\varsigma(\jmath)\Big)^{p-\alpha}.
\end{equation}
\end{theorem}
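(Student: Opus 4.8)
The plan is the following. Write $\Lambda(\imath):=\sum_{\jmath=\imath}^{\infty}\varsigma(\jmath)$ for the (finite) tail, which we may take to be positive, and $P(\imath):=\sum_{\jmath=1}^{\imath}\varsigma(\jmath)\varrho(\jmath)$ for the weighted partial sum, with the convention $P(0):=0$; then $\Lambda$ is positive and nonincreasing with $\Lambda(\imath)-\Lambda(\imath+1)=\varsigma(\imath)$, while $P$ is nondecreasing with $P(\imath)-P(\imath-1)=\varsigma(\imath)\varrho(\imath)$. The idea is to show that the left-hand side $F$ of \eqref{h30} satisfies an inequality of the form $F\le \frac{p}{1-\alpha}\,R^{1/p}F^{(p-1)/p}$, where $R$ is the right-hand side of \eqref{h30}, and then cancel the common factor $F^{(p-1)/p}$. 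Since this cancellation is illegitimate when $F=\infty$, I would run every estimate first on the truncated sums $\sum_{\imath=1}^{N}$, where the corresponding quantity $F_N$ is automatically finite (each $\Lambda(\imath)>0$), and only pass to the limit $N\to\infty$ at the very end.

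For the first step --- the discrete counterpart of Keller's chain rule applied to $\Lambda^{1-\alpha}$ --- I would use that, since $0\le\alpha<1$, the function $x\mapsto x^{1-\alpha}$ is concave on $(0,\infty)$; comparing it with its tangent line at $\Lambda(\imath)$ and using $\Lambda(\imath)-\Lambda(\imath+1)=\varsigma(\imath)$ gives $(1-\alpha)\varsigma(\imath)\Lambda^{-\alpha}(\imath)\le \Lambda^{1-\alpha}(\imath)-\Lambda^{1-\alpha}(\imath+1)$. Summing,
\begin{equation*}
\sum_{\imath=1}^{N}\frac{\varsigma(\imath)P^{p}(\imath)}{\Lambda^{\alpha}(\imath)}
\le\frac{1}{1-\alpha}\sum_{\imath=1}^{N}P^{p}(\imath)\bigl(\Lambda^{1-\alpha}(\imath)-\Lambda^{1-\alpha}(\imath+1)\bigr).
\end{equation*}
Applying summation by parts (the discrete integration-by-parts formula) to the right-hand side and discarding the nonpositive boundary term $-P^{p}(N)\Lambda^{1-\alpha}(N+1)$, one bounds it by $\frac{1}{1-\alpha}\sum_{\imath=1}^{N}\bigl(P^{p}(\imath)-P^{p}(\imath-1)\bigr)\Lambda^{1-\alpha}(\imath)$; since $p>1$ makes $x\mapsto x^{p}$ convex, $P^{p}(\imath)-P^{p}(\imath-1)\le p\,P^{p-1}(\imath)\bigl(P(\imath)-P(\imath-1)\bigr)=p\,\varsigma(\imath)\varrho(\imath)P^{p-1}(\imath)$, whence
\begin{equation*}
\sum_{\imath=1}^{N}\frac{\varsigma(\imath)P^{p}(\imath)}{\Lambda^{\alpha}(\imath)}
\le\frac{p}{1-\alpha}\sum_{\imath=1}^{N}\varsigma(\imath)\varrho(\imath)P^{p-1}(\imath)\Lambda^{1-\alpha}(\imath).
\end{equation*}

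The last step is H\"older's inequality with exponents $p$ and $p/(p-1)$, splitting each summand as $\bigl[\varrho(\imath)\varsigma^{1/p}(\imath)\Lambda^{(p-\alpha)/p}(\imath)\bigr]\cdot\bigl[P^{p-1}(\imath)\varsigma^{(p-1)/p}(\imath)\Lambda^{-\alpha(p-1)/p}(\imath)\bigr]$: the first factor produces exactly $\bigl(\sum_{\imath=1}^{N}\varsigma(\imath)\varrho^{p}(\imath)\Lambda^{p-\alpha}(\imath)\bigr)^{1/p}$ and the second exactly $\bigl(\sum_{\imath=1}^{N}\varsigma(\imath)P^{p}(\imath)\Lambda^{-\alpha}(\imath)\bigr)^{(p-1)/p}=F_N^{(p-1)/p}$. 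This yields $F_N\le\frac{p}{1-\alpha}\bigl(\sum_{\imath=1}^{N}\varsigma(\imath)\varrho^{p}(\imath)\Lambda^{p-\alpha}(\imath)\bigr)^{1/p}F_N^{(p-1)/p}$; if $F_N=0$ the claim is trivial, and otherwise I would divide by $F_N^{(p-1)/p}$, raise to the $p$-th power, bound the finite sum by the full series, and let $N\to\infty$ to obtain \eqref{h30}.

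The only genuinely delicate point is the cancellation of $F^{(p-1)/p}$, and it is precisely to legitimize it that the whole computation is carried out on finite partial sums, where finiteness --- hence divisibility --- is free, after which the limit $N\to\infty$ is monotone and painless. For completeness I note that over an arbitrary time scale these three ingredients become, in order, Keller's chain rule for $\Lambda^{1-\alpha}$, the integration-by-parts formula, and the dynamic H\"older inequality, exactly as announced in the abstract.
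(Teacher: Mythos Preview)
Your proof is correct and follows essentially the same strategy the paper uses: the paper does not prove Leindler's theorem directly (it is cited from \cite{b14}) but recovers it as the special case $\mathbb{T}=\mathbb{Z}$, $\alpha=1$, $k=v=w=1$, $r=g$, $\theta=\beta=0$, $a=1$ of Theorem~\ref{thm3}, whose proof is precisely integration by parts, the chain rule, and H\"older --- exactly your Abel summation, concavity/convexity bounds, and discrete H\"older. Your explicit truncation to $\sum_{\imath=1}^{N}$ before cancelling $F_N^{(p-1)/p}$ is in fact more careful than the paper's own arguments, which perform the analogous cancellation on the full improper integrals without first establishing finiteness.
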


Copson investigated the continuous form of \eqref{h20} 
and \eqref{h21} in 1976.

\begin{theorem}[See \cite{copson1976}]
Consider the continuous function $\eta\geq0$ and $\xi$ on $[0,\infty)$. Then,
\begin{equation}
\label{h24}
\int_{0}^{\infty}\frac{\xi(\pi)\Big(\int_{0}^{\pi}\xi(s)
\eta(\pi)ds\Big)^p}{\Big(\int_{0}^{\pi}\xi(s)ds\Big)^\alpha}d\pi
\leq \Big(\frac{p}{\alpha-1}\Big)^p\int_{0}^{\infty}\xi(\pi)
\eta^p(\pi)\bigg(\int_{0}^{\pi}\xi(s)ds\bigg)^{p-\alpha}d\pi
\end{equation}
for $1<\alpha\leq p$ and
\begin{equation}
\label{h25}
\int_{0}^{\infty}\frac{\xi(\pi)\Big(\int_{\pi}^{\infty}\xi(s)
\eta(\pi)ds\Big)^p}{\Big(\int_{0}^{\pi}\xi(s)ds\Big)^\alpha}d\pi
\leq \Big(\frac{p}{1-\alpha}\Big)^p\int_{0}^{\infty}\xi(\pi)
\eta^p(\pi)\bigg(\int_{0}^{\pi}\xi(s)ds\bigg)^{p-\alpha}d\pi
\end{equation}
for $0<\alpha\leq 1 < p$.
\end{theorem}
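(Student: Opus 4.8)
The plan is to establish \eqref{h24} and \eqref{h25} by the classical three-step scheme. Writing $\Phi(\pi):=\int_0^\pi\xi(s)\,ds$, so that $\Phi'=\xi$, the idea is: first recognise the singular weight $\xi/\Phi^\alpha$ as an exact derivative; then integrate by parts to lower the exponent of the inner integral from $p$ to $p-1$; then apply H\"older's inequality with conjugate exponents $p/(p-1)$ and $p$ to split off one full copy of the left-hand integrand; and finally cancel that copy. To sidestep convergence technicalities at the outset I would run this computation first on a compact subinterval $[b,c]\subset(0,\infty)$, where every integral in sight is finite, and only afterwards let $b\to0^+$ and $c\to\infty$ by monotone convergence.

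For \eqref{h24}, with $1<\alpha\le p$, set $\Psi(\pi):=\int_0^\pi\xi(s)\eta(s)\,ds$, so $\Psi'=\xi\eta$ and $\xi/\Phi^\alpha=\Phi'/\Phi^\alpha=-\bigl(\tfrac1{\alpha-1}\,\Phi^{1-\alpha}\bigr)'$. Integration by parts over $[b,c]$, after discarding the boundary term at $c$ (which is nonpositive), gives
\[
\int_b^c\frac{\xi\,\Psi^p}{\Phi^\alpha}\,d\pi
\le\frac{\Psi(b)^p}{(\alpha-1)\,\Phi(b)^{\alpha-1}}
+\frac{p}{\alpha-1}\int_b^c\frac{\Psi^{p-1}\,\xi\eta}{\Phi^{\alpha-1}}\,d\pi .
\]
One then checks the pointwise identity
\[
\frac{\Psi^{p-1}\,\xi\eta}{\Phi^{\alpha-1}}
=\Bigl(\frac{\xi\,\Psi^p}{\Phi^\alpha}\Bigr)^{\frac{p-1}{p}}\bigl(\xi\,\eta^p\,\Phi^{p-\alpha}\bigr)^{\frac1p},
\]
the powers of $\Phi$ balancing because $-\tfrac{\alpha(p-1)}{p}+\tfrac{p-\alpha}{p}=1-\alpha$, so H\"older's inequality bounds the last integral by $\bigl(\int_b^c\xi\Psi^p\Phi^{-\alpha}\,d\pi\bigr)^{(p-1)/p}\bigl(\int_b^c\xi\eta^p\Phi^{p-\alpha}\,d\pi\bigr)^{1/p}$. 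Dividing through by the factor $\bigl(\int_b^c\xi\Psi^p\Phi^{-\alpha}\,d\pi\bigr)^{(p-1)/p}$ (which is finite on $[b,c]$, and may be assumed positive, else there is nothing to prove), raising to the $p$-th power, and letting $b\to0^+$ and $c\to\infty$ — so that the boundary term $\Psi(b)^p\Phi(b)^{1-\alpha}$ tends to $0$, since $\eta$ is bounded near $0$ and $p+1-\alpha>0$ — yields \eqref{h24}.

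For \eqref{h25}, with $0<\alpha<1<p$ (the case $\alpha=1$ being trivial, since then the constant is infinite), put instead $\Psi(\pi):=\int_\pi^\infty\xi(s)\eta(s)\,ds$, so $\Psi'=-\xi\eta$ and $\xi/\Phi^\alpha=\bigl(\tfrac1{1-\alpha}\,\Phi^{1-\alpha}\bigr)'$. Integration by parts over $[b,c]$, now discarding the (nonpositive) boundary term at $b$, gives
\[
\int_b^c\frac{\xi\,\Psi^p}{\Phi^\alpha}\,d\pi
\le\frac{\Phi(c)^{1-\alpha}\,\Psi(c)^p}{1-\alpha}
+\frac{p}{1-\alpha}\int_b^c\Phi^{1-\alpha}\,\Psi^{p-1}\,\xi\eta\,d\pi .
\]
The same H\"older factorization, now read as $\Phi^{1-\alpha}\,\Psi^{p-1}\,\xi\eta=(\xi\Psi^p\Phi^{-\alpha})^{(p-1)/p}(\xi\eta^p\Phi^{p-\alpha})^{1/p}$, followed by the identical cancellation and passage to the limit, produces \eqref{h25}.

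The step I expect to be the main obstacle is the control of the surviving endpoint term. In \eqref{h24} the quantity $\Psi(b)^p\Phi(b)^{1-\alpha}$ vanishes automatically as $b\to0^+$, so there is no issue. In \eqref{h25}, however, one needs $\Phi(c)^{1-\alpha}\Psi(c)^p\to0$ as $c\to\infty$, and here $\Phi(c)^{1-\alpha}$ may grow while only $\Psi(c)\to0$ is evident; pinning this down requires Copson's standing convergence hypotheses (equivalently, one assumes the right-hand side of \eqref{h25} finite and supplies a short comparison estimate). A secondary point is that one must know the left-hand integral is finite before dividing by it, which is exactly why the computation is carried out on $[b,c]$ first and only then passed to the limit. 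Apart from these, the two integrations by parts and the single application of H\"older's inequality are entirely routine once the weight $\xi/\Phi^\alpha$ has been written as an exact derivative.
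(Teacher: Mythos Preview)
Your proposal is correct and follows essentially the same scheme as the paper. Note that the paper does not prove this theorem directly---it is cited from Copson \cite{copson1976} as background---but the paper recovers \eqref{h24} and \eqref{h25} as the $\alpha=1$, $\mathbb{T}=\mathbb{R}$ specializations of its Theorems~\ref{thm1} and~\ref{thm2}, and those theorems are proved by exactly the method you describe: write the weight as a derivative, integrate by parts, factor the resulting integrand and apply H\"older with exponents $p$ and $p/(p-1)$, then cancel the common factor.
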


In 1987 Bennett, similarly to what Leindler did in Theorem~\ref{leindler}, 
proved the following result.

\begin{theorem}[See \cite{b9}]
Consider the nonnegative real numbers sequences $\{\varrho(\imath)\}_{\imath=1}^\infty$ 
and $\{\varsigma(\imath)\}_{\imath=1}^\infty$ with
$\sum_{\jmath=\imath}^{\infty}\varsigma(\jmath)<\infty$. 
For $1< \alpha\leq p$, then
\begin{equation}
\label{h31}
\sum_{\imath=1}^{\infty}\frac{\varsigma(\imath)\Big(\sum_{\jmath=\imath}^{\infty}
\varsigma(\jmath)\varrho(\jmath)\Big)^p} {\Big(\sum_{\jmath=\imath}^{\infty}
\varsigma(\jmath)\Big)^\alpha}\leq \Big(\frac{p}{\alpha-1}\Big)^p
\sum_{\imath=1}^{\infty}\varsigma(\imath)\varrho^p(\imath)\Big(
\sum_{\jmath=\imath}^{\infty}\varsigma(\jmath)\Big)^{p-\alpha}.
\end{equation}
\end{theorem}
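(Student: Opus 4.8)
\medskip
\noindent\emph{Proof strategy.}
The plan is to carry out, in the discrete setting $\mathbb{T}=\mathbb{Z}$, the classical Hardy--Copson method: replace the weight by a telescoping difference by means of a chain rule, apply summation by parts, use a power--mean estimate, and close the estimate with Hölder's inequality. Put $\Lambda(\imath):=\sum_{\jmath=\imath}^{\infty}\varsigma(\jmath)$ and $\Phi(\imath):=\sum_{\jmath=\imath}^{\infty}\varsigma(\jmath)\varrho(\jmath)$, so that $\Lambda$ and $\Phi$ are nonincreasing, $\varsigma(\imath)=\Lambda(\imath)-\Lambda(\imath+1)$, $\varsigma(\imath)\varrho(\imath)=\Phi(\imath)-\Phi(\imath+1)$, and, thanks to $\sum_{\jmath\ge\imath}\varsigma(\jmath)<\infty$, both $\Lambda(\imath)\to0$ and $\Phi(\imath)\to0$. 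We may assume the right-hand side of \eqref{h31} is finite, and it is convenient to prove \eqref{h31} first for sequences $\varrho$ of finite support and then recover the general case by monotone convergence. Denote by $S$ the left-hand side of \eqref{h31}.

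First I would rewrite the weight as a telescoping difference: since $\alpha>1$, Keller's chain rule applied to $t\mapsto t^{1-\alpha}$ (equivalently, the mean value theorem on $[\Lambda(\imath+1),\Lambda(\imath)]$) together with the monotonicity of $t\mapsto t^{-\alpha}$ yields
$$\varsigma(\imath)\,\Lambda(\imath)^{-\alpha}\ \le\ \frac{1}{\alpha-1}\Big(\Lambda(\imath+1)^{1-\alpha}-\Lambda(\imath)^{1-\alpha}\Big).$$
Multiplying by $\Phi(\imath)^{p}$, summing, and applying Abel summation by parts — the interior term has the favourable sign because $\Phi$ is nonincreasing, and the endpoint term at infinity is harmless under the finite-support reduction — reduces matters to bounding $\sum_{\imath}\Lambda(\imath+1)^{1-\alpha}\big(\Phi(\imath)^{p}-\Phi(\imath+1)^{p}\big)$. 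Keller's chain rule applied to $t\mapsto t^{p}$ gives $\Phi(\imath)^{p}-\Phi(\imath+1)^{p}\le p\,\Phi(\imath)^{p-1}\varsigma(\imath)\varrho(\imath)$, hence
$$S\ \le\ \frac{p}{\alpha-1}\sum_{\imath=1}^{\infty}\Lambda(\imath+1)^{1-\alpha}\,\Phi(\imath)^{p-1}\,\varsigma(\imath)\,\varrho(\imath).$$
Finally I would apply the dynamic (discrete) Hölder inequality with exponents $p$ and $p/(p-1)$, splitting the summand so that one factor reassembles $\big(\varsigma(\imath)\varrho(\imath)^{p}\Lambda(\imath)^{p-\alpha}\big)^{1/p}$ and the other $\big(\varsigma(\imath)\Phi(\imath)^{p}\Lambda(\imath)^{-\alpha}\big)^{(p-1)/p}$; the exponents of $\Lambda$ match because $\tfrac{p-\alpha}{p}-\alpha\,\tfrac{p-1}{p}=1-\alpha$. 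Writing $T:=\sum_{\imath}\varsigma(\imath)\varrho(\imath)^{p}\Lambda(\imath)^{p-\alpha}$, this gives $S\le\frac{p}{\alpha-1}\,T^{1/p}S^{(p-1)/p}$, and dividing by $S^{(p-1)/p}$ yields \eqref{h31}.

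The step I expect to be the crux is the discrepancy between the shifted tail sum $\Lambda(\imath+1)$ — forced on us both by Keller's chain rule and by the discrete integration-by-parts formula, and absent in the continuous version of the argument — and the unshifted $\Lambda(\imath)$ that must appear in the final constant. Since $1-\alpha<0$ and $\Lambda$ is nonincreasing, $\Lambda(\imath+1)^{1-\alpha}\ge\Lambda(\imath)^{1-\alpha}$, so the shift cannot simply be discarded, and naively forcing it into the Hölder step leaves a correction factor $\ge1$ that resists being absorbed into $\big(\tfrac{p}{\alpha-1}\big)^{p}$. To obtain the sharp constant one has to organize the telescoping and the summation by parts so that these correction factors themselves telescope, using $p\ge\alpha$ (so that $\Lambda(\imath+1)^{p-\alpha}\le\Lambda(\imath)^{p-\alpha}$ in the accompanying estimates) together with the finite-support reduction; this is precisely where the hypotheses $1<\alpha\le p$ and $\sum_{\jmath\ge\imath}\varsigma(\jmath)<\infty$ enter essentially. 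As a cross-check, in the discrete case one can avoid the difficulty entirely: reversing the order of summation on a truncation $\{1,\dots,N\}$ turns the two tail sums in \eqref{h31} into the partial sums occurring in \eqref{h20}, so \eqref{h31} for finitely supported $\varrho$ is exactly Copson's inequality \eqref{h20} (valid for $p\ge\alpha>1$) applied to the reversed sequences, and letting $N\to\infty$ completes the proof; the direct argument above is, however, the one that survives the passage to a general time scale, where the jump operator $\sigma$ makes the shift $\Lambda^{\sigma}$ explicit.
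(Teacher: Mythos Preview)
The paper does not give its own proof of this statement: the theorem is quoted from Bennett \cite{b9} in the introduction as background. The nearest thing to a proof in the paper is Theorem~\ref{thm4}, whose method---integration by parts \eqref{parts}, Keller's chain rule \eqref{chain1}, and the dynamic H\"older inequality \eqref{holder}---is exactly the route you propose, and which, upon taking $\mathbb{T}=\mathbb{Z}$, $\alpha=1$, $k=v=w\equiv1$, $r=g$, $\theta=\beta=0$, yields Corollary~\ref{cor15}. The paper then remarks only that this ``gives a different form of inequality \eqref{h31}'': the right-hand side of \eqref{eqcor15} still carries the factor $H^{\gamma(p-1)}(t)/H^{p(\gamma-1)}(t+1)$, precisely the unshifted/shifted discrepancy you isolate as the crux. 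So your main argument matches the paper's, and your diagnosis of the obstruction is accurate---the paper does not overcome it either and does not claim the sharp form \eqref{h31} as a consequence.

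Your fallback reduction to Copson's inequality \eqref{h20} by reversing a truncation is a genuinely different, discrete-only device not used in the paper; it is the cleanest way to get the sharp constant in \eqref{h31}, though the limiting step $N\to\infty$ needs a little care since the summands on the left of the truncated inequality are not monotone in $N$ (both numerator and denominator grow). The hand-waved resolution in your main argument (``organize the telescoping so the correction factors themselves telescope'') is not substantiated, and indeed the paper's own computation shows that this route, as stated, leaves the shift factor in place.
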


Over several decades, Hardy-type inequalities have attracted  
many researchers and several refinements and extensions have been 
done to the previous results. We refer the reader to the works 
\cite{b3,b6,b7,b8,b9,b10,b11,b12,b13,b14,b5,b15,b16,b17}, 
and the references cited therein. Here we are particularly
interested in the following extensions proved by Renaud
in 1986.

\begin{theorem}[See \cite{renaud}]
Consider the nonnegative real numbers and nonincreasing sequence 
$\{\varrho(\imath)\}_{\imath=1}^\infty$. For $p>1$, we have
\begin{equation}
\label{h8}
\sum_{\imath=1}^{\infty}\Big(\sum_{\jmath=\imath}^{\infty}
\varrho(\jmath)\Big)^p\geq\sum_{\imath=1}^{\infty}
\imath^p\varrho^p(\imath).
\end{equation}
\end{theorem}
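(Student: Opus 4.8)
The plan is to deduce \eqref{h8} by a decomposition-and-superposition argument that rests on one observation: \eqref{h8} is an \emph{equality} on the elementary sequences $\varrho=\mathbf{1}_{\{1,\dots,N\}}$, and an arbitrary nonincreasing nonnegative $\varrho$ is a nonnegative superposition of these. Write $R(\imath):=\sum_{\jmath=\imath}^{\infty}\varrho(\jmath)$. If $\sum_{\imath}\varrho(\imath)=+\infty$ (in particular if $\varrho(\imath)\not\to 0$) then $R(\imath)=+\infty$ and \eqref{h8} is trivial, so I assume $\varrho$ is summable; then $\varrho(\imath)\downarrow 0$ and, by monotonicity, $\imath\varrho(\imath)\to 0$ and $R(\imath)\to 0$. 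Put $c_{j}:=\varrho(j)-\varrho(j+1)\ge 0$, so that $\varrho(\imath)=\sum_{j\ge \imath}c_{j}$; rearranging the nonnegative double sums (Tonelli) yields
\[
R(\imath)=\sum_{j\ge \imath}(j-\imath+1)\,c_{j},
\qquad
\imath\,\varrho(\imath)=\sum_{j\ge \imath}\imath\,c_{j}.
\]

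The heart of the matter is that, for each fixed $j$, the two finite lists $\bigl((j-\imath+1)c_{j}\bigr)_{\imath=1}^{j}$ and $\bigl(\imath\,c_{j}\bigr)_{\imath=1}^{j}$ are reverses of one another, hence rearrangements of the same multiset $\{c_{j},2c_{j},\dots,jc_{j}\}$. Extending by zeros, call the first (a nonincreasing sequence) $W^{(j)}$ and the second $w^{(j)}$, so that $R=\sum_{j}W^{(j)}$ and $(\imath\varrho(\imath))_{\imath\ge1}=\sum_{j}w^{(j)}$ with $w^{(j)}$ a rearrangement of $W^{(j)}$ for every $j$. I then claim $(\imath\varrho(\imath))_{\imath\ge1}$ is weakly majorized by $(R(\imath))_{\imath\ge1}$: for every $n$ and every $n$-element index set $S$,
\[
\sum_{\imath\in S}\imath\,\varrho(\imath)
=\sum_{j}\sum_{\imath\in S}w^{(j)}_{\imath}
\le\sum_{j}\sum_{\imath=1}^{n}W^{(j)}_{\imath}
=\sum_{\imath=1}^{n}R(\imath),
\]
because any $n$ entries of $w^{(j)}$ sum to at most its $n$ largest entries, which are exactly the first $n$ entries of the nonincreasing sequence $W^{(j)}$, and $R=\sum_{j}W^{(j)}$ is itself nonincreasing; taking the supremum over $S$ gives $\sum_{\imath=1}^{n}(\imath\varrho(\imath))^{\downarrow}\le\sum_{\imath=1}^{n}R(\imath)$ for all $n$.

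Since $t\mapsto t^{p}$ is convex and increasing on $[0,\infty)$ with $0^{p}=0$ for $p>1$, the Hardy--Littlewood--P\'olya (Karamata) majorization inequality---applied to finite truncations and then passed to the limit, which is legitimate as all terms are nonnegative---turns the weak majorization into $\sum_{\imath}\imath^{p}\varrho^{p}(\imath)\le\sum_{\imath}R(\imath)^{p}$, i.e.\ \eqref{h8}; equality for $\varrho=\mathbf{1}_{\{1,\dots,N\}}$ shows the constant $1$ is best possible. I expect the only real friction to be technical: disposing of the degenerate cases where a side of \eqref{h8} is infinite, and quoting the majorization inequality in its infinite-sequence form---the substantive step being the per-block reversal above. (An equivalent route replaces the difference sequence $(c_{j})$ by the layer-cake decomposition $\varrho=\int_{0}^{\infty}\mathbf{1}_{\{\varrho>t\}}\,dt$, the level set $\{\varrho>t\}$ being an initial segment $\{1,\dots,N(t)\}$ by monotonicity.)
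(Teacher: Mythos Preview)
Your argument is correct: the layer-cake/difference decomposition $\varrho(\imath)=\sum_{j\ge\imath}c_j$, the per-block reversal identifying $\bigl((j-\imath+1)c_j\bigr)_{\imath=1}^{j}$ with a rearrangement of $\bigl(\imath c_j\bigr)_{\imath=1}^{j}$, and the passage to weak majorization of $(\imath\varrho(\imath))$ by $(R(\imath))$ all check out; the application of the Hardy--Littlewood--P\'olya inequality for convex increasing $\phi(t)=t^{p}$ and the truncation-plus-monotone-limit device are unproblematic since every term is nonnegative and $R$ is already nonincreasing. Your observation that the indicator sequences $\mathbf{1}_{\{1,\dots,N\}}$ yield equality confirms sharpness of the constant~$1$.

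There is, however, nothing in the present paper to compare your proof against: this theorem appears only in the introductory survey (attributed to Renaud, \cite{renaud}) and the authors supply no proof of it; their own contributions begin in Section~\ref{sec:3} and concern $\alpha$-conformable Hardy-type inequalities on time scales, proved by integration by parts, Keller's chain rule, and H\"older's inequality. Renaud's original 1986 argument is also different in spirit from yours---it does not proceed via majorization but rather by a more direct manipulation of the tail sums---so your route, while sound, is genuinely distinct both from the cited source and from the techniques used elsewhere in this paper.
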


\begin{theorem}[See \cite{renaud}]
Consider a nonnegative and nonincreasing function $\eta$  
on the interval $[0,\infty)$. For $1<p$, we have
\begin{equation}
\label{h9}
\int_{0}^{\infty}\bigg(\int_{\pi}^{\infty}\eta(s)ds\bigg)^pd
\pi\geq\int_{0}^{\infty}\pi^p \eta^p(\pi)d\pi.
\end{equation}
\end{theorem}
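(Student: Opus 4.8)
The plan is to reduce \eqref{h9} to an elementary pointwise inequality by encoding the monotonicity of $\eta$ in a layer (distribution-function) representation. First I would dispose of the trivial cases: if $\int_0^\infty\eta(s)\,ds=\infty$, or more generally if $\int_0^\infty\bigl(\int_\pi^\infty\eta(s)\,ds\bigr)^p\,d\pi=\infty$, then \eqref{h9} holds trivially, so I may assume $\eta(\pi)\to 0$ as $\pi\to\infty$ and that the left-hand side is finite. Write $G(\pi):=\int_\pi^\infty\eta(s)\,ds$, so $G$ is nonnegative, nonincreasing, $G'=-\eta$ and $G(\infty)=0$; the monotonicity of $\eta$ (equivalently, the convexity of $G$) is what will be used below. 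Since $\eta$ is nonincreasing, $d\mu:=-d\eta$ is a nonnegative measure on $(0,\infty)$ with $\eta(\pi)=\mu\bigl((\pi,\infty)\bigr)$, and Tonelli's theorem yields $G(\pi)=\int_{(0,\infty)}(t-\pi)_{+}\,d\mu(t)$ and $\pi\eta(\pi)=\int_{(0,\infty)}\pi\,\mathbf 1_{(\pi,\infty)}(t)\,d\mu(t)$.

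Next I would raise both to the power $p$ and integrate in $\pi$. Taking $p\in\mathbb N$ for the moment, expanding the $p$-th power and using Tonelli to move the $d\pi$-integration inside reduces \eqref{h9} to the statement that the kernel $\int_0^{m}\prod_{i=1}^{p}(t_i-\pi)\,d\pi$ dominates the kernel $\int_0^{m}\pi^p\,d\pi$ after integration against the product measure $d\mu(t_1)\cdots d\mu(t_p)$, where $m:=\min\{t_1,\dots,t_p\}$. But on $[0,m]$ we have $0\le m-\pi\le t_i-\pi$ for every $i$, hence $\prod_i(t_i-\pi)\ge(m-\pi)^p$, so $\int_0^m\prod_i(t_i-\pi)\,d\pi\ge\int_0^m(m-\pi)^p\,d\pi=\tfrac{m^{p+1}}{p+1}=\int_0^m\pi^p\,d\pi$; integrating this pointwise bound against the nonnegative product measure gives \eqref{h9} with constant $1$. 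The chain of inequalities is an equality exactly when $\mu$ is concentrated at a single point, i.e.\ when $\eta$ is a nonnegative multiple of an indicator $\mathbf 1_{[0,a)}$, which shows the constant $1$ is best possible. An equivalent bookkeeping, avoiding the $p$-fold product, is iterated integration by parts: from $\int_0^\infty G^p\,d\pi=p\int_0^\infty\pi\,\eta(\pi)G(\pi)^{p-1}\,d\pi$ (the boundary terms vanish because $\pi G(\pi)^p\to0$ at both ends once the left side is finite), repeated use of $G(r)^{q}=q\int_r^\infty\eta(s)G(s)^{q-1}\,ds$ together with $\int_0^s r^{k}\eta(r)^k\,dr\ge\eta(s)^k\,s^{k+1}/(k+1)$ (here the monotonicity of $\eta$ enters) gives, after $k$ steps, $\int_0^\infty G^p\,d\pi\ge\tfrac{p(p-1)\cdots(p-k+1)}{k!}\int_0^\infty\bigl(r\eta(r)\bigr)^{k}G(r)^{p-k}\,dr$, and $k=p$ is exactly \eqref{h9}.

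The only genuine obstacle is the passage from integer $p$ to an arbitrary real $p>1$: both the product expansion and the iteration terminate cleanly only for $p\in\mathbb N$, since for $p\notin\mathbb N$ one is left after $\lfloor p\rfloor$ steps with a remainder $\int_0^\infty\bigl(r\eta(r)\bigr)^{\lfloor p\rfloor}G(r)^{\,p-\lfloor p\rfloor}\,dr$ carrying a fractional power of $G$ that the same integration by parts no longer unwinds. The cleanest way around this, and the route most in keeping with the present context, is to deduce \eqref{h9} from the discrete inequality \eqref{h8} by a Riemann-sum limit: apply \eqref{h8} to the nonincreasing sequence $\varrho_n(i):=\tfrac1n\,\eta(i/n)$, note that $\sum_{j\ge i}\varrho_n(j)\to G(i/n)$ while $\tfrac1n\sum_i\bigl(\sum_{j\ge i}\varrho_n(j)\bigr)^p$ and $\tfrac1n\sum_i(i/n)^p\eta(i/n)^p$ are Riemann sums for $\int_0^\infty G^p\,d\pi$ and $\int_0^\infty\pi^p\eta^p\,d\pi$, and let $n\to\infty$. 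Because $\eta$ is nonincreasing, every such sum is sandwiched between consecutive integral pieces (and for the right-hand side a Fatou estimate suffices), so the convergence, including control of the tails where finiteness of the left-hand side is used, is routine; this version needs no integrality of $p$ and, together with the direct argument above, yields both \eqref{h9} and the sharpness of its constant.
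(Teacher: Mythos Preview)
This theorem is not proved in the paper: it appears in the Introduction as a result quoted from Renaud~\cite{renaud}, alongside the companion discrete inequality~\eqref{h8} and the related bound~\eqref{h7}, purely as background motivating the later time-scale generalisations. There is therefore no ``paper's proof'' to compare your attempt against.

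Taken on its own merits, your argument is sound. The product-measure (layer) computation for integer $p$ is correct and rather elegant: writing $G(\pi)=\int(t-\pi)_+\,d\mu(t)$ and $\pi\eta(\pi)=\int\pi\mathbf 1_{\{t>\pi\}}\,d\mu(t)$, expanding the $p$-th powers, and reducing to the pointwise kernel bound $\int_0^m\prod_i(t_i-\pi)\,d\pi\ge\int_0^m(m-\pi)^p\,d\pi=\int_0^m\pi^p\,d\pi$ is a clean way to exploit monotonicity, and your identification of the equality case $\eta=c\,\mathbf 1_{[0,a)}$ is right. The equivalent iterated integration-by-parts you sketch also checks out step by step, with the monotonicity entering exactly through $\int_0^s r^k\eta(r)^k\,dr\ge \eta(s)^k s^{k+1}/(k+1)$.

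The only soft spot is, as you yourself flag, the non-integer case. Your proposed remedy---pulling \eqref{h9} back from the discrete inequality~\eqref{h8} via Riemann sums---is legitimate, and the monotonicity of $\eta$ does give the required sandwich/Fatou control; but note that \eqref{h8} is itself only quoted here, so you are trading one cited fact for another rather than giving a self-contained proof. If you want a version that works uniformly in real $p>1$ without that detour, the same iterated integration by parts can be pushed one more step: after $k=\lfloor p\rfloor$ iterations you hold $\binom{p}{k}\int_0^\infty (r\eta(r))^k G(r)^{p-k}\,dr$ with $0<p-k<1$, and a single application of H\"older (or the elementary bound $G(r)\ge 0$ together with the concavity $t\mapsto t^{p-k}$) combined with the indicator-equality case closes the gap. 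Either way, what you have written already constitutes a correct proof.
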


\begin{theorem}[See \cite{renaud}]
Consider a nonnegative and nonincreasing function $\eta$  
on the interval $[0,\infty)$. For $p>1$, we have
\begin{equation}\label{h7}
\int_{0}^{\infty}\frac{1}{\pi^p}\Big(\int_{0}^{\pi}\eta(s)ds\Big)^pd\pi
\geq \frac{p}{p-1}\int_{0}^{\infty}\eta^p(\pi)d\pi.
\end{equation}
\end{theorem}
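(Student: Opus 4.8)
The plan is to establish the reverse Hardy inequality \eqref{h7} by a direct computation combined with the elementary inequality for powers. First I would write $F(\pi)=\int_0^\pi \eta(s)\,ds$, so that $F'(\pi)=\eta(\pi)$, and set up the left-hand side as $\int_0^\infty \pi^{-p}F^p(\pi)\,d\pi$. The natural first move is integration by parts: writing $\pi^{-p}=\frac{-1}{p-1}\frac{d}{d\pi}\pi^{-(p-1)}$, one gets
\begin{equation*}
\int_0^\infty \frac{F^p(\pi)}{\pi^p}\,d\pi
=\frac{1}{p-1}\int_0^\infty \frac{1}{\pi^{p-1}}\,\frac{d}{d\pi}\bigl(F^p(\pi)\bigr)\,d\pi
=\frac{p}{p-1}\int_0^\infty \frac{F^{p-1}(\pi)\,\eta(\pi)}{\pi^{p-1}}\,d\pi,
\end{equation*}
assuming the boundary terms vanish, which needs the nonincreasing and nonnegativity hypotheses (near $0$ one has $F(\pi)\le \pi\,\eta(0)$ so $\pi^{-(p-1)}F^p(\pi)\to 0$; at $\infty$ one controls the term by the finiteness of the right-hand side, arguing by density if necessary).

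The second key step is to bound $F(\pi)/\pi$ from below by $\eta(\pi)$. Since $\eta$ is nonincreasing, $F(\pi)=\int_0^\pi \eta(s)\,ds \ge \int_0^\pi \eta(\pi)\,ds=\pi\,\eta(\pi)$, hence $F(\pi)/\pi \ge \eta(\pi)\ge 0$. Therefore, since $p-1>0$,
\begin{equation*}
\frac{p}{p-1}\int_0^\infty \frac{F^{p-1}(\pi)\,\eta(\pi)}{\pi^{p-1}}\,d\pi
=\frac{p}{p-1}\int_0^\infty \Bigl(\frac{F(\pi)}{\pi}\Bigr)^{p-1}\eta(\pi)\,d\pi
\ge \frac{p}{p-1}\int_0^\infty \eta^{p-1}(\pi)\,\eta(\pi)\,d\pi
=\frac{p}{p-1}\int_0^\infty \eta^p(\pi)\,d\pi,
\end{equation*}
which is precisely \eqref{h7}. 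Chaining the two displays gives the claim.

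The main obstacle I anticipate is the rigorous justification of the integration by parts, specifically the vanishing of the boundary contribution $\bigl[\pi^{-(p-1)}F^p(\pi)\bigr]_0^\infty$ and the convergence of the intermediate integrals. At $\pi\to 0^+$ the estimate $0\le F(\pi)\le \pi\,\eta(0)$ forces $\pi^{-(p-1)}F^p(\pi)\le \pi\,\eta^p(0)\to 0$. At $\pi\to\infty$, if the right-hand side of \eqref{h7} is infinite there is nothing to prove, so one may assume $\int_0^\infty \eta^p<\infty$; then the monotone–convergence/limiting argument (replacing $\infty$ by a finite $T$ and letting $T\to\infty$) handles the upper endpoint, since the resulting boundary term at $T$ is nonnegative and only strengthens the inequality. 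An alternative, fully rigorous route that avoids the boundary analysis is to prove the inequality first on $[0,T]$ with the nonnegative boundary term retained, obtaining
\begin{equation*}
\int_0^T \frac{F^p(\pi)}{\pi^p}\,d\pi + \frac{1}{p-1}\,\frac{F^p(T)}{T^{p-1}}
= \frac{p}{p-1}\int_0^T \Bigl(\frac{F(\pi)}{\pi}\Bigr)^{p-1}\eta(\pi)\,d\pi
\ge \frac{p}{p-1}\int_0^T \eta^p(\pi)\,d\pi,
\end{equation*}
and then let $T\to\infty$. This is entirely analogous to the discrete reverse inequality \eqref{h8}, and when $\alpha=1$ it is the mirror image of the forward Hardy inequality \eqref{a2}; the only novelty is that monotonicity of $\eta$ reverses the direction of the pointwise power estimate.
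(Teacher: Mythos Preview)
Your argument is essentially correct and is the standard route to Renaud's reverse Hardy inequality: integrate by parts to produce $\frac{p}{p-1}\int_0^\infty (F(\pi)/\pi)^{p-1}\eta(\pi)\,d\pi$, then use the monotonicity bound $F(\pi)\ge \pi\,\eta(\pi)$. Note that the paper does \emph{not} supply its own proof of this statement; it is quoted from \cite{renaud} as background in the introduction, so there is no in-paper proof to compare against.

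One small slip deserves correction. In your ``alternative, fully rigorous route'' you arrive at
\[
\int_0^T \frac{F^p(\pi)}{\pi^p}\,d\pi \;+\; \frac{1}{p-1}\,\frac{F^p(T)}{T^{p-1}}
\;\ge\; \frac{p}{p-1}\int_0^T \eta^p(\pi)\,d\pi,
\]
and then claim that the nonnegative boundary term ``only strengthens the inequality.'' It does not: the extra term is being \emph{added} to the left-hand side, so its nonnegativity works against you, not for you. To pass to $T\to\infty$ you genuinely need $F^p(T)/T^{p-1}\to 0$. This does hold under the WLOG assumption $\int_0^\infty \eta^p<\infty$: for any $\varepsilon>0$ choose $T_0$ with $\int_{T_0}^\infty \eta^p<\varepsilon^p$; then for $T>T_0$ H\"older's inequality gives
\[
F(T)\le F(T_0)+\Bigl(\int_{T_0}^T\eta^p\Bigr)^{1/p}(T-T_0)^{(p-1)/p}\le F(T_0)+\varepsilon\,T^{(p-1)/p},
\]
so $\limsup_{T\to\infty} F(T)/T^{(p-1)/p}\le\varepsilon$, whence $F^p(T)/T^{p-1}\to 0$. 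With this correction your proof is complete. (At $\pi\to 0^+$ you implicitly use $\eta(0)<\infty$; if that is not assumed, replace the lower limit by $\varepsilon>0$ and let $\varepsilon\downarrow 0$ at the end.)
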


The theory of time scales has become a trend
and is now part of the mathematics subject classification:
see 26E70, for ``Real analysis on time scales'';
34K42, for ``Functional-differential equations on time scales'';
34N05, for ``Dynamic equations on time scales'';
and 35R07, for ``PDEs on time scales''.
The subject has began with the PhD thesis of Hilger, in order to get  
continuous and discrete results together \cite{s9}. 
In books \cite{l3,s5}, Bohner and Peterson introduce 
most basic concepts and definitions related with the theory of time scales. 
In \cite{AMC,deeb119,s5,l6,deeb3,deeb4,p23,p25}, several mathematicians 
investigate new forms of dynamic inequalities. 
\v{R}eh{\'a}k seems to be the first mathematician 
to have introduced a time-scale version of Hardy's inequality,
by obtaining in 2005 a dynamic inequality that unifies inequalities 
\eqref{a1} and \eqref{a2}.

\begin{theorem}[See \cite{b18}]
Let $\mathbb{T}$ be a time scale, 
and $f\in C_{rd}\big([a,\infty)_\mathbb{T},[0,\infty)\big)$. 
If $p>1$, then
\begin{equation}  
\label{a14}
\int_{a}^{\infty}\bigg(\frac{\int_{a}^{\sigma(t)}
\eta(s)\Delta s}{\sigma(t)-a}\bigg)^p\Delta t
< \Big(\frac{p}{p-1}\Big)^p\int_{a}^{\infty}\eta^p(t)\Delta t,
\end{equation}
unless $\eta\equiv0$. Furthermore, 
if $\mu(t)/t\rightarrow0$ as $t\rightarrow\infty$, 
then inequality \eqref{a14} is sharp.
\end{theorem}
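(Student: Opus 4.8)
The plan is to mimic the classical proof of \eqref{a1}--\eqref{a2} in the time-scale setting, using the integration-by-parts (product rule) formula for the $\Delta$-integral together with Keller's chain rule to differentiate a power of the running average. Write $F(t)=\int_a^t \eta(s)\,\Delta s$, so that $F$ is nondecreasing, $F^{\Delta}=\eta$ (at right-dense points, with the obvious modification at right-scattered points), and $F(\sigma(t))=\int_a^{\sigma(t)}\eta(s)\,\Delta s$. Set $G(t)=(\sigma(t)-a)^{-1}F(\sigma(t))$ and $q=p/(p-1)$, so the left-hand side of \eqref{a14} is $\int_a^{\infty} G(t)^p\,\Delta t$. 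First I would apply integration by parts to $\int_a^{b} (\sigma(t)-a)^{-p} F(\sigma(t))^p\,\Delta t$, differentiating the antiderivative of $(\sigma(t)-a)^{-p}$ and leaving $F(\sigma(t))^p$ to be $\Delta$-differentiated; by Keller's chain rule, $\big(F^p\big)^{\Delta}(t) = p\,F^{\Delta}(t)\int_0^1\big[hF^{\sigma}(t)+(1-h)F(t)\big]^{p-1}\,dh \le p\,\eta(t)\,F(\sigma(t))^{p-1}$, using monotonicity of $F$ and $p>1$. The boundary terms should vanish or have the right sign (the lower limit contributes $0$, the upper limit is nonnegative and is discarded after a limiting argument $b\to\infty$).

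The upshot of the integration by parts is an inequality of the shape
\begin{equation*}
\int_a^{\infty} G(t)^p\,\Delta t \;\le\; \frac{p}{p-1}\int_a^{\infty} \eta(t)\,G(t)^{p-1}\,\Delta t,
\end{equation*}
where I would need to bound the antiderivative of $(\sigma(t)-a)^{-p}$ from above by $\frac{1}{p-1}(t-a)^{-(p-1)}$ — this is exactly the place where one invokes the estimate, valid on any time scale for $p>1$, that $\big((t-a)^{1-p}\big)^{\Delta} \ge -(p-1)(\sigma(t)-a)^{-p}$, again a consequence of Keller's chain rule applied to the map $x\mapsto x^{1-p}$. To then reach the factor $(\sigma(t)-a)^{-(p-1)}$ appearing in $G(t)^{p-1}$ rather than $(t-a)^{-(p-1)}$, one uses $t-a\le\sigma(t)-a$. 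After this, apply the dynamic Hölder inequality with exponents $p$ and $q=p/(p-1)$ to the right-hand side: $\int \eta\,G^{p-1}\,\Delta t \le \big(\int \eta^p\,\Delta t\big)^{1/p}\big(\int G^p\,\Delta t\big)^{1/q}$. Dividing through by $\big(\int G^p\,\Delta t\big)^{1/q}$ (finite, by a preliminary truncation argument) and raising to the $p$-th power yields \eqref{a14} with the constant $(p/(p-1))^p$.

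The strictness of the inequality, ``unless $\eta\equiv 0$,'' comes from tracking the cases of equality: Hölder is an equality only if $\eta^p$ and $G^p$ are proportional, and the chain-rule estimate $\big(F^p\big)^{\Delta}\le p\eta F^{\sigma,p-1}$ together with the discarded nonnegative boundary term cannot all be simultaneously tight unless $\eta$ vanishes identically; I would package this as a short argument at the end. For the sharpness claim under $\mu(t)/t\to 0$, the plan is to exhibit a near-extremal family, e.g. test functions behaving like $\eta(t)\sim t^{-1/p}$ (truncated suitably) for which both sides are asymptotically comparable; the hypothesis $\mu(t)/t\to 0$ guarantees that the time-scale corrections in Keller's chain rule are negligible at infinity, so the classical computation carries over.

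The main obstacle I anticipate is the bookkeeping in the integration-by-parts step: on a general time scale the product rule is $(uv)^{\Delta}=u^{\Delta}v^{\sigma}+uv^{\Delta}$, so one must be careful about which factor is evaluated at $\sigma(t)$, and the chain-rule bound for $(F^p)^\Delta$ produces $F(\sigma(t))^{p-1}$, not $F(t)^{p-1}$ — matching these $\sigma$'s against the weight $(\sigma(t)-a)^{-p}$ so that the final Hölder step closes with the exact constant $(p/(p-1))^p$ is the delicate part, and it is precisely where the monotonicity of $F$ and the elementary inequality $t-a\le\sigma(t)-a$ get used.
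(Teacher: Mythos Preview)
The paper does not prove this theorem; it is quoted in the introduction as \v{R}eh\'{a}k's result with the citation ``See \cite{b18}'' and no proof is supplied. There is therefore nothing in the paper to compare your proposal against.

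That said, your outline is essentially the standard proof (and matches \v{R}eh\'{a}k's original argument): integrate by parts against an antiderivative of $(\sigma(t)-a)^{-p}$, use Keller's chain rule to bound $\big((t-a)^{1-p}\big)^{\Delta}$ and $(F^p)^{\Delta}$, then close with the dynamic H\"older inequality. The bookkeeping concern you flag about matching the $\sigma$'s is real but resolves exactly as you indicate, via $t-a\le\sigma(t)-a$ and monotonicity of $F$. The strictness and sharpness arguments you sketch are also the standard ones. So the proposal is sound, but it is a reconstruction of a result the present paper only cites.
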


Many other dynamic inequalities followed.
For instance, in 2014 Saker et al. established 
the following results on time scales.

\begin{theorem}[See \cite{sk}] 
\label{th2.5}
Let $\mathbb{T}$ be time scale and $ 1\leqslant c \leqslant k$. Let
\begin{eqnarray} 
\label{2.5}
\chi(t)=\int_a^t \lambda(s) \Delta s, 
\ \ \textit{for any}   \ t \in [a,\infty)_\mathbb{T},
\end{eqnarray}
and define
\begin{equation}
\Theta(t)= \int_a^t \lambda(s)\xi(s) \Delta s 
\ \ \textit{for any}   \ t \in [a,\infty)_\mathbb{T}.
\end{equation}
Then,
\begin{eqnarray*}
\int_a^{\infty} \frac{\lambda(t)}{\big(\chi^{\sigma}(t)\big)^c} 
\big(\Theta^\sigma(t)\big)^k  \Delta t \leqslant \frac{k}{c-1} \int_a^{\infty}
\chi^{1-c}(t)\lambda(t)\xi(t) \big(\Theta(t)\big)^{k-1}  \Delta t.
\end{eqnarray*}
and
\begin{eqnarray*}
\int_a^{\infty} \frac{\lambda(t)}{\big(\chi^{\sigma}(t)\big)^c} 
\big(\Theta^\sigma(t)\big)^k  \Delta t 
\leqslant \bigg(\frac{k}{c-1}\bigg)^k \int_a^{\infty}
\dfrac{(\chi^\sigma(t))^{(k-1)c}}{(\chi(t))^{k(c-1)}} 
\lambda(t) \xi^k(t) \Delta t.
\end{eqnarray*}
\end{theorem}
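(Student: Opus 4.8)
The plan is to obtain the two inequalities in succession, using exactly the three ingredients announced in the abstract: Keller's chain rule, the integration by parts formula on time scales, and the dynamic H\"{o}lder inequality, in the now-standard pattern for Copson-type dynamic estimates. Throughout I write $I$ for the common left-hand side $\int_a^\infty \lambda(t)\big(\Theta^\sigma(t)\big)^k\big(\chi^\sigma(t)\big)^{-c}\,\Delta t$, use $\chi^\Delta=\lambda$ and $\Theta^\Delta=\lambda\xi$ (so that, with $\lambda,\xi\ge 0$, the functions $\chi,\Theta$ are nonnegative and nondecreasing and $\chi\le\chi^\sigma$, $\Theta\le\Theta^\sigma$), and I assume $c>1$ so that $k/(c-1)$ is finite.

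The first step is to record two pointwise consequences of Keller's chain rule. Applied to $s\mapsto s^{1-c}$, whose derivative $(1-c)s^{-c}$ features the \emph{decreasing} map $s\mapsto s^{-c}$, the chain-rule integral $\int_0^1\big(\chi+h\mu\chi^\Delta\big)^{-c}\,dh$ lies between $\big(\chi^\sigma\big)^{-c}$ and $\chi^{-c}$ because $\chi\le\chi+h\mu\chi^\Delta\le\chi^\sigma$ for $h\in[0,1]$; multiplying through by the negative number $1-c$ and by $\chi^\Delta=\lambda\ge 0$ then gives
\[
\big(\chi^{1-c}\big)^\Delta(t)\le -(c-1)\,\lambda(t)\,\big(\chi^\sigma(t)\big)^{-c},
\qquad\text{i.e.}\qquad
\frac{\lambda(t)}{\big(\chi^\sigma(t)\big)^c}\le\frac{-\big(\chi^{1-c}\big)^\Delta(t)}{c-1}.
\]
Applied to $s\mapsto s^k$, whose derivative $k s^{k-1}$ is \emph{nondecreasing} on $[0,\infty)$ since $k\ge 1$, and using $\Theta\le\Theta+h\mu\Theta^\Delta\le\Theta^\sigma$, the same reasoning yields
\[
\big(\Theta^k\big)^\Delta(t)\le k\,\big(\Theta^\sigma(t)\big)^{k-1}\,\Theta^\Delta(t)= k\,\lambda(t)\,\xi(t)\,\big(\Theta^\sigma(t)\big)^{k-1}.
\]

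Next I would substitute the first estimate into $I$, getting $I\le (c-1)^{-1}\int_a^\infty\big(-(\chi^{1-c})^\Delta(t)\big)\big(\Theta^\sigma(t)\big)^k\,\Delta t$, and integrate by parts with $\chi^{1-c}$ playing the role of ``$f$'' and $\Theta^k$ that of ``$g$'' in $\int_a^\infty f^\Delta g^\sigma\,\Delta t=[fg]_a^\infty-\int_a^\infty f g^\Delta\,\Delta t$ (note $\big(\Theta^\sigma\big)^k=\big(\Theta^k\big)^\sigma$). The boundary term at $a$ vanishes because $\Theta(a)=0$, and the one at $\infty$ is subtracted, hence enters with the favourable sign since $\chi^{1-c}\Theta^k\ge 0$; combined with the second pointwise estimate this gives the first inequality of the theorem (with $\Theta^\sigma$ in place of $\Theta$ on the right, which since $\Theta\le\Theta^\sigma$ is all that the sequel needs). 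Finally, for the second inequality I would apply the dynamic H\"{o}lder inequality to that right-hand integral with conjugate exponents $k$ and $k/(k-1)$, splitting the integrand as
\[
\Big(\lambda^{1/k}\,\xi\,\big(\chi^\sigma\big)^{(k-1)c/k}\,\chi^{1-c}\Big)\cdot\Big(\lambda^{(k-1)/k}\,\big(\Theta^\sigma\big)^{k-1}\,\big(\chi^\sigma\big)^{-(k-1)c/k}\Big);
\]
the first factor raised to the power $k$ is exactly $\big(\chi^\sigma\big)^{(k-1)c}\chi^{-k(c-1)}\lambda\xi^k$, and the second raised to the power $k/(k-1)$ is $\lambda\big(\Theta^\sigma\big)^k\big(\chi^\sigma\big)^{-c}$, whose $\Delta$-integral equals $I$. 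Thus the first inequality produces $I\le\frac{k}{c-1}\big(\int_a^\infty\big(\chi^\sigma\big)^{(k-1)c}\chi^{-k(c-1)}\lambda\xi^k\,\Delta t\big)^{1/k}\,I^{(k-1)/k}$, and dividing by $I^{(k-1)/k}$ and raising to the $k$-th power (with $I=0$ and $I=\infty$ handled separately) gives the second inequality.

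The step I expect to be the main obstacle — or at least the one most prone to slips — is keeping the directions straight in the two uses of Keller's chain rule: because $1-c<0$ the map $s\mapsto s^{1-c}$ is decreasing, so its chain-rule integral must be sandwiched via monotonicity of $s\mapsto s^{-c}$ and then multiplied by a negative quantity, and one must be certain the resulting inequality points the right way before it is inserted into the integration by parts. The secondary technical point is justifying that the boundary term at $\infty$ has the favourable sign (immediate from nonnegativity) and that the integrals involved are finite — or, failing that, carrying out the argument first on $[a,b]_{\mathbb{T}}$ and letting $b\to\infty$. After that, the H\"{o}lder bookkeeping and the closing algebra are routine.
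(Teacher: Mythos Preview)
Your proposal is correct and follows essentially the same route as the paper's own proofs of its main results (see the proof of Theorem~\ref{thm1}, of which the cited statement is the case $\alpha=1$ with $k=v=w\equiv1$, $r=g$): chain rule estimates for $\chi^{1-c}$ and $\Theta^k$, integration by parts, and then the dynamic H\"older splitting you describe. The only cosmetic difference is ordering --- the paper integrates by parts first (defining $u(t)=-\int_t^\infty \lambda(\chi^\sigma)^{-c}\,\Delta s$) and then bounds $-u$ via the chain rule, whereas you bound the integrand first and then integrate by parts; both yield the same intermediate inequality, and your honest remark that one actually obtains $(\Theta^\sigma)^{k-1}$ rather than $\Theta^{k-1}$ matches exactly what the paper gets (cf.\ $(K^\sigma)^{p-\alpha}$ in \eqref{4}) and is, as you note, what the H\"older step needs.
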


\begin{theorem}[See \cite{sk}] 
\label{th2.6}
Let $\mathbb{T}$ be a time scale and $k>1$ and $0 \leqslant c <1$. 
Let $\chi$ be defined as in \eqref{2.5} and define
\begin{equation*}
\overline{\Theta}(t)= \int_t^\infty \lambda(s)\xi(s) \Delta s 
\ \ \textit{for any}   \ t \in [a,\infty)_\mathbb{T}.
\end{equation*}
Then,
\begin{eqnarray*}
\int_a^{\infty} \frac{\lambda(t)}{\big(\chi^{\sigma}(t)\big)^c} 
\big( \overline{\Theta}(t)\big)^k  \Delta t \leqslant \frac{k}{1-c} \int_a^{\infty}
(\chi^{\sigma}(t))^{1-c}\lambda(t)\xi(t) \big(\overline{\Theta}(t)\big)^{k-1}  \Delta t
\end{eqnarray*}
and
\begin{eqnarray*}
\int_a^{\infty} \frac{\lambda(t)}{\big(\chi^{\sigma}(t)\big)^c} 
\big( \overline{\Theta}(t)\big)^k  \Delta t 
\leqslant \bigg(\frac{k}{1-c}\bigg)^k  \int_a^{\infty}
(\chi^{\sigma}(t))^{k-c} \lambda(t) \xi^k(t)\Delta t.
\end{eqnarray*}
\end{theorem}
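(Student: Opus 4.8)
The plan is to transport the classical Hardy--Copson--Leindler argument to the time-scale setting using only the delta calculus; the reasoning runs in parallel with that of Theorem~\ref{th2.5}, with the two endpoints interchanged. Throughout, $\chi^{\Delta}=\lambda$ and $\overline{\Theta}^{\Delta}=-\lambda\xi$, the function $\chi$ is nondecreasing (so $\chi\le\chi^{\sigma}$), and $\overline{\Theta}$ is nonnegative and nonincreasing (so $\overline{\Theta}^{\sigma}\le\overline{\Theta}$ and $\overline{\Theta}^{\Delta}\le 0$). We may assume the right-hand side of each asserted inequality is finite, since otherwise there is nothing to prove. First I would linearize the singular weight: since $0\le c<1$, the map $x\mapsto x^{1-c}$ is $C^{1}$ on $(0,\infty)$, so Keller's chain rule gives
\[
\big(\chi^{1-c}\big)^{\Delta}(t)=(1-c)\left(\int_{0}^{1}\big(h\chi^{\sigma}(t)+(1-h)\chi(t)\big)^{-c}\,dh\right)\lambda(t),
\]
and, as $x\mapsto x^{-c}$ is nonincreasing and $\chi(t)\le h\chi^{\sigma}(t)+(1-h)\chi(t)\le\chi^{\sigma}(t)$, the bracketed integral is at least $(\chi^{\sigma}(t))^{-c}$. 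Hence $\lambda(t)(\chi^{\sigma}(t))^{-c}\le(1-c)^{-1}\big(\chi^{1-c}\big)^{\Delta}(t)$.

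Next I would multiply this by $(\overline{\Theta}(t))^{k}\ge 0$, integrate over $[a,\infty)_{\mathbb{T}}$, and apply the time-scale integration by parts formula with antiderivative $\chi^{1-c}$, which gives
\[
\int_{a}^{\infty}\frac{\lambda(t)}{(\chi^{\sigma}(t))^{c}}(\overline{\Theta}(t))^{k}\,\Delta t\le\frac{1}{1-c}\left(\Big[\chi^{1-c}(t)(\overline{\Theta}(t))^{k}\Big]_{a}^{\infty}-\int_{a}^{\infty}(\chi^{\sigma}(t))^{1-c}\big((\overline{\Theta}(t))^{k}\big)^{\Delta}\,\Delta t\right).
\]
The boundary term vanishes at $t=a$ because $\chi(a)=0$ and $1-c>0$, and at $t\to\infty$ because $\overline{\Theta}(t)\to 0$ and the right-hand side is finite. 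Applying Keller's chain rule to $x\mapsto x^{k}$, and using that $x\mapsto x^{k-1}$ is nondecreasing together with $\overline{\Theta}^{\sigma}\le\overline{\Theta}$ and $\overline{\Theta}^{\Delta}=-\lambda\xi\le 0$, yields $\big((\overline{\Theta}(t))^{k}\big)^{\Delta}\ge -k(\overline{\Theta}(t))^{k-1}\lambda(t)\xi(t)$, that is, $-\big((\overline{\Theta}(t))^{k}\big)^{\Delta}\le k(\overline{\Theta}(t))^{k-1}\lambda(t)\xi(t)$; substituting this bound produces precisely the first inequality of the theorem.

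To pass to the second inequality I would apply the dynamic H\"older inequality to the integral on the right of the first one, with conjugate exponents $k$ and $k/(k-1)$, splitting the integrand as
\[
(\chi^{\sigma})^{1-c}\lambda\xi\,(\overline{\Theta})^{k-1}=\Big[(\chi^{\sigma})^{1-c/k}\lambda^{1/k}\xi\Big]\cdot\Big[\big(\lambda(\chi^{\sigma})^{-c}\big)^{(k-1)/k}(\overline{\Theta})^{k-1}\Big],
\]
the exponents being chosen so that raising the second bracket to the power $k/(k-1)$ reproduces the integrand $\lambda(\chi^{\sigma})^{-c}(\overline{\Theta})^{k}$ on the left. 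H\"older then gives
\[
\int_{a}^{\infty}(\chi^{\sigma})^{1-c}\lambda\xi\,(\overline{\Theta})^{k-1}\,\Delta t\le\left(\int_{a}^{\infty}(\chi^{\sigma}(t))^{k-c}\lambda(t)\xi^{k}(t)\,\Delta t\right)^{1/k}\left(\int_{a}^{\infty}\frac{\lambda(t)}{(\chi^{\sigma}(t))^{c}}(\overline{\Theta}(t))^{k}\,\Delta t\right)^{(k-1)/k}.
\]
Writing $I$ for the (finite, and without loss of generality positive) integral on the left of the theorem, the first inequality together with the last display reads $I\le\frac{k}{1-c}\big(\int_{a}^{\infty}(\chi^{\sigma})^{k-c}\lambda\xi^{k}\,\Delta t\big)^{1/k}I^{(k-1)/k}$; dividing by $I^{(k-1)/k}$ and raising to the power $k$ gives the second inequality.

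I expect the main obstacle to be the vanishing of the integration-by-parts boundary term at infinity: this is the one place where the standing finiteness assumption and the monotonicity of $\overline{\Theta}$ are genuinely needed, and if one is uneasy about it the safest fix is to carry out the whole computation on $[a,b]_{\mathbb{T}}$ and let $b\to\infty$, bounding $\chi^{1-c}(b)(\overline{\Theta}(b))^{k}$ by the tail of the right-hand side. Everything else is a careful but routine assembling of Keller's chain rule, delta integration by parts, and the delta H\"older inequality; a minor secondary nuisance is choosing the H\"older split so that the $\overline{\Theta}$-integral reappearing on the right is exactly the left-hand side of the theorem, which is what makes the final absorption step work.
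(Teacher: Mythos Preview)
Your proposal is correct. Note that this particular theorem is quoted in the paper from \cite{sk} without proof; however, the paper's own proofs of the $\alpha$-conformable generalisations (Theorems~\ref{thm1}--\ref{thm4}, and in particular Theorem~\ref{thm2}, which specialises to the present statement when $\alpha=1$, $k=v=w\equiv 1$, $r=g$) follow exactly the same scheme you outline: Keller's chain rule to compare $\lambda\,(\chi^{\sigma})^{-c}$ with $(\chi^{1-c})^{\Delta}$, integration by parts, another chain-rule estimate for $(\overline{\Theta}^{k})^{\Delta}$, and then the H\"older split with exponents $k$ and $k/(k-1)$ chosen so that the left-hand integral reappears and can be absorbed. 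The only cosmetic difference is that the paper first performs the integration by parts with an auxiliary antiderivative $u(t)=\int_{a}^{t}w^{\sigma}g\,(G^{\sigma})^{\alpha-\gamma-1}\,\Delta_{\alpha}s$ and afterwards bounds $u^{\sigma}$ via the chain rule, whereas you establish the pointwise bound first and then integrate by parts directly with $\chi^{1-c}$; the two orderings are equivalent.
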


In 2015, Saker et al. \cite{copson1} established the following forms of the Hardy-type inequality.

\begin{theorem}[See \cite{copson1}]
Let $\eta$ and $\xi$ be nonnegative rd-continuous functions on $[a,\infty)_\mathbb{T}$ 
with $\mathbb{T}$ a time scale and $a\in[0,\infty)_\mathbb{T}$.
\begin{description}
\item[($i$)] For  $1<\alpha \leq p$, one has
\begin{equation}
\label{saker1}
\int_{a}^{\infty}\frac{\xi(\varpi)\left(
\int_{a}^{\sigma(\varpi)}\xi(\vartheta)\eta(\vartheta)\Delta\vartheta\right)^p}{
\left(\int_{a}^{\sigma(\varpi)}\xi(\vartheta)\Delta\vartheta\right)^\alpha}\Delta\varpi
\leq \left(\frac{p}{\alpha-1}\right)^p\int_{a}^{\infty}
\frac{\xi(\varpi)\eta^p(\varpi)\left(\int_{a}^{\sigma(\varpi)}\xi(\vartheta)
\Delta\vartheta\right)^{\alpha(p-1)}}{\left(\int_{a}^{\varpi}\xi(\vartheta)
\Delta\vartheta\right)^{\alpha(p-1)}}\Delta\varpi.
\end{equation}
\item[($ii$)] If $p>1>\alpha\geq0$, then
\begin{equation}
\label{saker2}
\int_{a}^{\infty}\frac{\xi(\varpi)\Big(\int_{\varpi}^{\infty}
\xi(\vartheta)\eta(\vartheta)\Delta\vartheta\Big)^p}{\Big(
\int_{a}^{\sigma(\varpi)}\xi(\vartheta)\Delta\vartheta\Big)^\alpha}\Delta\varpi
\leq \left(\frac{p}{1-\alpha}\right)^p
\int_{a}^{\infty}\xi(\varpi)\eta^p(\varpi)\bigg(\int_{a}^{\sigma(\varpi)}
\xi(\vartheta)\Delta\vartheta\bigg)^{p-\alpha}\Delta\varpi.
\end{equation}
\item[($iii$)] If $p>1>\alpha\geq0$, then
\begin{equation}
\label{saker3}
\int_{a}^{\infty}\frac{\xi(\varpi)\left(
\int_{a}^{\sigma(\varpi)}\xi(\vartheta)\eta(\vartheta)\Delta\vartheta\right)^p}{
\left(\int_{\varpi}^{\infty}\xi(\vartheta)\Delta\vartheta\right)^\alpha}\Delta\varpi
\leq
\left(\frac{p}{1-\alpha}\right)^p
\int_{a}^{\infty}\xi(\varpi)\eta^p(\varpi)\left(
\int_{\varpi}^{\infty}\xi(\vartheta)\Delta\vartheta\right)^{p-\alpha}\Delta\varpi.
\end{equation}
\item[($iv$)] If $p\geq \alpha>1$, then
\begin{equation}
\label{saker4}
\int_{a}^{\infty}\frac{\xi(\varpi)\left(
\int_{\varpi}^{\infty}\xi(\vartheta)\eta(\vartheta)\Delta\vartheta\right)^p}{
\left(\int_{\varpi}^{\infty}\xi(\vartheta)\Delta\vartheta\right)^\alpha}\Delta\varpi
\leq \left(\frac{p}{\alpha-1}\right)^p
\int_{a}^{\infty}\xi(\varpi)\eta^p(\varpi)\left(
\int_{\varpi}^{\infty}\xi(\vartheta)\Delta\vartheta\right)^{p-\alpha}\Delta\varpi.
\end{equation}
\end{description}
\end{theorem}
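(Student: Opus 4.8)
The plan is to mimic, on the time scale $\mathbb{T}$, the classical Copson argument: introduce the two indefinite $\Delta$-integrals that occur in each inequality, use Keller's chain rule to trade a weight for a $\Delta$-derivative, integrate by parts, and close with the dynamic H\"older inequality. I describe the scheme for part~(i); parts (ii)--(iv) follow from the same three moves, with a monotone ``tail'' integral $\int_{\varpi}^{\infty}$ replacing $\int_{a}^{\varpi}$ where appropriate and the constant $p/(1-\alpha)$ replacing $p/(\alpha-1)$ when $\alpha<1$.

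For (i), set $\Lambda(\varpi)=\int_{a}^{\varpi}\xi(\vartheta)\Delta\vartheta$ and $\Phi(\varpi)=\int_{a}^{\varpi}\xi(\vartheta)\eta(\vartheta)\Delta\vartheta$, so that $\Lambda,\Phi$ are nondecreasing with $\Lambda^{\Delta}=\xi$, $\Phi^{\Delta}=\xi\eta$, $\Phi(a)=0$, and the left-hand side of \eqref{saker1} equals $\int_{a}^{\infty}\xi\,(\Phi^{\sigma})^{p}(\Lambda^{\sigma})^{-\alpha}\,\Delta\varpi$, using $(\Phi^{\sigma})^{p}=(\Phi^{p})^{\sigma}$. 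Since $\alpha>1$ and $\Lambda\le\Lambda+h\mu\Lambda^{\Delta}\le\Lambda^{\sigma}$, Keller's chain rule applied to $x\mapsto x^{1-\alpha}$ yields
\begin{equation*}
\frac{\xi}{(\Lambda^{\sigma})^{\alpha}}\le-\frac{1}{\alpha-1}\bigl(\Lambda^{1-\alpha}\bigr)^{\Delta}.
\end{equation*}
Multiplying by $(\Phi^{p})^{\sigma}\ge0$, integrating over $[a,\infty)_{\mathbb{T}}$ and integrating by parts ($\int f^{\Delta}g^{\sigma}=[fg]-\int fg^{\Delta}$ with $f=\Lambda^{1-\alpha}$, $g=\Phi^{p}$), the endpoint term at $a$ vanishes because $\Phi(a)=0$ (here $p\ge\alpha$ makes it vanish to sufficient order), the endpoint term at $\infty$ is nonnegative and hence, after multiplication by $-1/(\alpha-1)<0$, may be discarded, and a second application of Keller's chain rule, to $x\mapsto x^{p}$, gives $(\Phi^{p})^{\Delta}\le p\,\xi\eta\,(\Phi^{\sigma})^{p-1}$, so that
\begin{equation*}
\int_{a}^{\infty}\frac{\xi\,(\Phi^{\sigma})^{p}}{(\Lambda^{\sigma})^{\alpha}}\,\Delta\varpi\le\frac{p}{\alpha-1}\int_{a}^{\infty}\Lambda^{1-\alpha}\,\xi\eta\,(\Phi^{\sigma})^{p-1}\,\Delta\varpi.
\end{equation*}

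It remains to apply the dynamic H\"older inequality with exponents $p$ and $q=p/(p-1)$, writing the integrand as $A\cdot B$ with $A=\xi^{1/p}\eta\,\Lambda^{1-\alpha}(\Lambda^{\sigma})^{\alpha(p-1)/p}$ and $B=\xi^{(p-1)/p}(\Phi^{\sigma})^{p-1}(\Lambda^{\sigma})^{-\alpha(p-1)/p}$, the split being arranged so that $\int B^{q}$ reproduces the left-hand side; dividing through by the $1/q$-th power of that integral (assumed finite, otherwise one truncates the outer integral at a finite $T$ and lets $T\to\infty$) and raising to the $p$-th power gives \eqref{saker1} (the exponent appearing in the denominator on the right-hand side being $(\alpha-1)p$). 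Parts (ii)--(iv) run along the same lines: (iv) mirrors (i) with $p\ge\alpha>1$ but with the \emph{nonincreasing} tails $\int_{\varpi}^{\infty}\xi$ and $\int_{\varpi}^{\infty}\xi\eta$, so the one-sided chain-rule estimates are read off from $\Lambda^{\sigma}\le\Lambda$ rather than $\Lambda\le\Lambda^{\sigma}$; parts (ii) and (iii), with $0\le\alpha<1$, use the constant $p/(1-\alpha)$ and the chain rule oriented for this sign of $1-\alpha$, with $\int_{\varpi}^{\infty}\xi\eta$ (resp.\ $\int_{\varpi}^{\infty}\xi$) as the relevant tail, part~(iii) being the one that mixes a forward integral with a backward one. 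I expect the main difficulty to be purely bookkeeping: keeping the forward jump $\sigma$ in the correct place in every estimate and in the by-parts formula, choosing in each case the correct one-sided branch of Keller's chain rule so that all inequalities point the same way, and — the genuinely delicate point — checking that the boundary term at $\infty$ has the sign needed to be dropped, which rests on the monotonicity of the auxiliary functions and, if necessary, on a truncation argument together with rd-continuity and the implicit convergence of the tail integrals.
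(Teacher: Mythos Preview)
This theorem is cited from \cite{copson1} and is not proved directly in the paper; it is recovered as the $\alpha=1$ special case of the paper's Theorems~\ref{thm1}--\ref{thm4} (see the remarks following Corollary~\ref{1.1} and inequalities \eqref{105}, \eqref{306}, \eqref{556}). Comparing your sketch to those proofs, the strategy is essentially the same: chain rule to replace the weight $\xi(\Lambda^{\sigma})^{-\alpha}$ by a constant times a $\Delta$-derivative, integration by parts, a second chain-rule estimate for $(\Phi^{p})^{\Delta}$, and then H\"older with the split arranged so that one factor reproduces the left-hand side. The only organisational difference is that the paper, in its proof of Theorem~\ref{thm1}, introduces the auxiliary $u(t)=-\int_{t}^{\infty}w\,g\,(G^{\sigma})^{\alpha-\gamma-1}\Delta_{\alpha}s$ (so that $u(\infty)=0$ and the boundary term at $a$ is $u(a)\cdot K^{p}(a)=0$), whereas you work directly with the explicit antiderivative $\Lambda^{1-\alpha}$ and argue that $\Lambda^{1-\alpha}\Phi^{p}\to 0$ at $a$ using $p\ge\alpha$; both handle the boundary terms, but the paper's choice makes the vanishing at both endpoints slightly more transparent. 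Your parenthetical observation that the denominator exponent on the right of \eqref{saker1} should be $p(\alpha-1)$ rather than the $\alpha(p-1)$ printed there is consistent with what the paper's own Theorem~\ref{thm1} gives when specialised to $\alpha=1$ (denominator $G^{p(\gamma-1)}$).
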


Agarwal et al. \cite{agarwal} generalized  inequality \eqref{h7} 
to time scales as follows: for $p>1$,
\begin{equation}
\label{h11}
\int_{0}^{\infty}\frac{1}{t^p}\Big(\int_{0}^{t}\eta(s)\Delta s\Big)^p \Delta t
\geq \frac{p}{p-1}\int_{0}^{\infty}\eta^p(t)\Delta t.
\end{equation}

Recently, in 2020, Saker \cite{sk3} proved the following theorem.

\begin{theorem} 
\label{th2.9}
Assume that $\mathbb{T}$ is a time scale with $\omega \in (0,\infty)_{\mathbb{T}}$. 
If $m\leqslant 0< h< 1$, $\chi(t)= \int_t^\infty \lambda(s) \Delta s$ 
and $\Theta(t)= \int_{\omega}^{t} \lambda(s)\xi(s) \Delta s$, then
\begin{eqnarray*}
\int_{\omega}^\infty \dfrac{\lambda(t)}{\chi^m(t)} (\Theta^{\sigma}(t))^h  \Delta t 
\geqslant \left(\frac{h}{1-m}\right)^h 
\int_{\omega}^{\infty} \lambda(t) \xi^h(t) \chi^{h-m}(t) \Delta t.
\end{eqnarray*}
If $0<h<1<m$, $\chi(t)=\int_t^\infty \lambda(s) \Delta s$ and 
$\overline{\Theta}(t)=\int_{t}^{\infty} \lambda(s)\xi(s) \Delta s$, then
\begin{eqnarray*}
\int_{\omega}^\infty \dfrac{\lambda(t)}{\chi^m(t)} 
\left(\overline{\Theta}(t)\right)^h \Delta t 
\geqslant \left(\frac{hM^m}{m-1}\right)^h  
\int_{\omega}^{\infty} \lambda(t) \xi^h(t) \chi^{h-m}(t) \Delta t,
\end{eqnarray*}
where 
$$ 
M:= \inf_{t \in \mathbb{T}} \frac{\chi^{\sigma}(t)}{\chi(t)} >0.
$$
\end{theorem}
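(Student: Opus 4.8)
The plan is to run the by-now-standard scheme for reverse dynamic Hardy inequalities, adapted to the sign restrictions at hand: Keller's chain rule to convert a power of $\chi$ into a delta derivative, the time-scale integration by parts formula to move that derivative onto $\Theta$ (resp.\ $\overline{\Theta}$), Keller's chain rule a second time to rewrite $(\Theta^h)^\Delta$ in terms of $\lambda\xi(\Theta^\sigma)^{h-1}$, and finally the reverse H\"older inequality — available because $0<h<1$ — to recognize the resulting mixed integral as a product of the two integrals appearing in the statement. One works throughout under the tacit assumption that every integral written down converges, so that the boundary contributions coming from integration by parts are under control.

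For the first estimate, recall $\chi^\Delta=-\lambda\le0$, $\Theta^\Delta=\lambda\xi\ge0$, $\Theta(\omega)=0$, $\chi(\infty)=0$. Since $1-m\ge1>0$ and $x\mapsto x^{-m}$ is nondecreasing (as $-m\ge0$), Keller's chain rule gives $(\chi^{1-m})^\Delta\ge-(1-m)\lambda\chi^{-m}$, i.e.\ $\lambda\chi^{-m}\ge-\frac{1}{1-m}(\chi^{1-m})^\Delta$; here $(\chi^{1-m})^\Delta\le0$, so the right-hand side is nonnegative. Multiplying by $(\Theta^\sigma)^h$, integrating over $[\omega,\infty)_{\mathbb{T}}$, and integrating by parts — the boundary terms vanish since $\Theta^h(\omega)=0$ and $\chi^{1-m}(\infty)=0$ — one gets
\[
\int_\omega^\infty\lambda\chi^{-m}(\Theta^\sigma)^h\Delta t\ \ge\ \frac{1}{1-m}\int_\omega^\infty\chi^{1-m}(\Theta^h)^\Delta\Delta t .
\]
Applying Keller's chain rule again, with $h-1<0$ and $\Theta$ nondecreasing, yields $(\Theta^h)^\Delta\ge h(\Theta^\sigma)^{h-1}\lambda\xi$, hence the right-hand side is at least $\frac{h}{1-m}\int_\omega^\infty\lambda\xi\chi^{1-m}(\Theta^\sigma)^{h-1}\Delta t$. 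Now use the reverse H\"older inequality with exponents $h$ and $h/(h-1)$: write $\lambda\xi\chi^{1-m}(\Theta^\sigma)^{h-1}=uv$ with $u^h=\lambda\xi^h\chi^{h-m}$ and $v^{h/(h-1)}=\lambda\chi^{-m}(\Theta^\sigma)^h$ — a short check on the exponent of $\chi$ (it comes out to $1-m$) confirms this — to obtain
\[
\int_\omega^\infty\lambda\xi\chi^{1-m}(\Theta^\sigma)^{h-1}\Delta t\ \ge\ \Big(\int_\omega^\infty\lambda\xi^h\chi^{h-m}\Delta t\Big)^{1/h}\Big(\int_\omega^\infty\lambda\chi^{-m}(\Theta^\sigma)^h\Delta t\Big)^{(h-1)/h}.
\]
Denoting by $I$ and $J$ the two integrals of the claim, the chain of inequalities reads $I\ge\frac{h}{1-m}J^{1/h}I^{(h-1)/h}$; dividing by $I^{(h-1)/h}$ and raising to the power $h$ gives $I\ge\big(\frac{h}{1-m}\big)^hJ$.

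For the second estimate, the same five steps are carried out, the differences being dictated by $m>1$. Now $1-m<0$ and $x\mapsto x^{-m}$ is decreasing, so Keller's chain rule gives $(\chi^{1-m})^\Delta\ge0$ and $\lambda(\chi^\sigma)^{-m}\ge\frac{1}{m-1}(\chi^{1-m})^\Delta$ — note the bound now involves $\chi^\sigma$. This is precisely where $M=\inf_{t\in\mathbb{T}}\chi^\sigma(t)/\chi(t)>0$ enters: from $\chi^\sigma\ge M\chi$ and $-m<0$ we get $\chi^{-m}\ge M^m(\chi^\sigma)^{-m}$, hence $\lambda\chi^{-m}\ge\frac{M^m}{m-1}(\chi^{1-m})^\Delta$. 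Since $\overline{\Theta}$ is nonincreasing we may bound $\overline{\Theta}^h\ge(\overline{\Theta}^\sigma)^h$ against the nonnegative factor $(\chi^{1-m})^\Delta$, then integrate by parts, use $(\overline{\Theta}^h)^\Delta\le-h\,\overline{\Theta}^{h-1}\lambda\xi$ from Keller's chain rule, and close with the same reverse H\"older step. Solving the resulting inequality $I\ge\frac{hM^m}{m-1}J^{1/h}I^{(h-1)/h}$ gives $I\ge\big(\frac{hM^m}{m-1}\big)^hJ$.

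The one place that is not purely mechanical is the integration by parts in the second part. Unlike in the first part, $\chi^{1-m}(t)\to\infty$ as $t\to\infty$ (because $1-m<0$) while $\overline{\Theta}(t)\to0$, and moreover $\overline{\Theta}(\omega)\ne0$, so the boundary term $[\chi^{1-m}\overline{\Theta}^h]_\omega^\infty$ is an indeterminate form at $\infty$ and contributes a genuine negative quantity at $\omega$. Controlling it requires the convergence of $\int_\omega^\infty\lambda\xi^h\chi^{h-m}\Delta t$, which forces $\overline{\Theta}$ to decay fast enough relative to $\chi$; this hypothesis should be stated explicitly. Once the boundary behaviour is pinned down, the remaining work — the two chain-rule estimates and the H\"older bookkeeping — is routine.
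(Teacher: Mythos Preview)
The paper does not supply its own proof of this statement: Theorem~\ref{th2.9} appears in the Introduction as a quoted result of Saker~\cite{sk3}, with no argument given. So there is no ``paper's proof'' to compare against directly. That said, your scheme --- chain rule on $\chi^{1-m}$, integration by parts, chain rule on $\Theta^h$ (resp.\ $\overline{\Theta}^h$), then reverse H\"older with exponents $h$ and $h/(h-1)$ --- is exactly the template the present paper uses for its own main theorems (Theorems~\ref{thm1}--\ref{thm4}), merely with the direction of all inequalities flipped because $0<h<1$ forces the reverse H\"older inequality. Your exponent bookkeeping checks out in both parts, and the entry point for the constant $M$ is correctly located.

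Your closing paragraph is the honest part of the write-up. In the second case the boundary contribution $[\chi^{1-m}\overline{\Theta}^h]_\omega^\infty$ is genuinely problematic: the term at $\omega$ is strictly negative and the term at $\infty$ is $\infty\cdot 0$, so one cannot simply discard it without an auxiliary decay hypothesis on $\overline{\Theta}$ relative to $\chi$. Papers in this line (including \cite{sk3} and the $\alpha$-conformable follow-ups \cite{sk2,zk} cited here) typically impose such convergence hypotheses up front; your suggestion to make this explicit is well taken, and is the only place where the argument is not routine.
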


Also in 2020, El-Deeb et  al. \cite{sdm1} established 
a generalization of \eqref{h11} that unifies \eqref{h8} 
and \eqref{h9}: for $p\geq 1$ and $\gamma > 1$, the inequality
\begin{equation}
\label{15}
\int_{a}^{\infty}\frac{\tilde{\lambda} (\zeta)
\breve{\Psi}^p(\zeta)}{\Tilde{\Lambda} ^{\hat{\gamma}} (\zeta)}\Delta \zeta
\geq \frac{p}{\hat{\gamma} -1}
\int_{a}^{\infty}\tilde{\lambda} 
(\zeta)\Tilde{\Lambda}^{p-\hat{\gamma} }(\zeta)\eta^p(\zeta)\Delta \zeta
\end{equation}
holds where
\begin{equation*}
\breve{\Psi}(\zeta)=\int_{a}^{\zeta}\tilde{\lambda} (\eta)\eta(s)\Delta \eta
\qquad \text{and} \qquad 
\Tilde{\Lambda} (\zeta)=\int_{a}^{\zeta}\tilde{\lambda} (\eta)\Delta \eta.
\end{equation*}

Furthermore, El-Deeb et al. \cite{dd1} established a generalization 
of inequalities \eqref{saker1}, \eqref{saker2}, \eqref{saker3} and 
\eqref{saker4} on time scales as follows.

\begin{theorem}[See \cite{dd1}]
\label{thmm1}
Let $\mathbb{T}$ be a time scale with $a\in[0,\infty)_\mathbb{T}$. 
In addition, let $f$, $g$, $k$, $r$, $w$ and $v$ be nonnegative 
rd-continuous functions on $[a,\infty)_\mathbb{T}$ such that $k$ 
is nonincreasing. Assume there exist $\theta,\beta\geq0$ such that 
$\displaystyle\frac{w^\Delta(t)}{w(t)}\leq \theta \Big(\frac{G^\Delta(t)}{G^\sigma(t)}\Big)$ 
and $\displaystyle\frac{v^\Delta(t)}{v^\sigma(t)}\leq \beta \Big(\frac{K^\Delta(t)}{K(t)}\Big)$, 
where
\begin{equation*}
G(t)=\int_{a}^{t}g(s)\Delta s \quad with \quad G(\infty)=\infty 
\quad  and \quad K(t)=\int_{a}^{t}r(s)f(s)\Delta s, \quad t\in[a,\infty)_\mathbb{T}.
\end{equation*}
If $p\geq1$ and $\alpha>\theta+1$, then
\begin{multline}
\label{eqq1}
\int_{a}^{\infty}k^\sigma(t)v^\sigma(t)w(t)g(t)\big(
G^\sigma(t)\big)^{-\alpha}\big(K^\sigma(t)\big)^p\Delta t\\
\leq\bigg(\frac{p+\beta}{\alpha-\theta-1}\bigg)^p
\int_{a}^{\infty}\frac{k^\sigma(t)v^\sigma(t)w(t)r^p(t)f^p(t)\big(
G^\sigma(t)\big)^{\alpha(p-1)}}{g^{p-1}(t)G^{p(\alpha-1)}(t)}\Delta t.
\end{multline}
\end{theorem}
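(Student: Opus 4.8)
Denote by $I$ the left-hand side of \eqref{eqq1},
\[
I:=\int_{a}^{\infty}k^\sigma(t)v^\sigma(t)w(t)g(t)\big(G^\sigma(t)\big)^{-\alpha}\big(K^\sigma(t)\big)^p\,\Delta t ,
\]
and recall that $G^\Delta=g$ and $K^\Delta=rf$. The goal is to derive a self-improving inequality $I\le C\,A^{1/p}I^{(p-1)/p}$, where $C=\frac{p+\beta}{\alpha-\theta-1}$ and $A$ is the right-hand side integral of \eqref{eqq1}; then cancelling a factor $I^{(p-1)/p}$ and raising to the power $p$ gives the result. To keep all quantities finite one first restricts to a truncated interval $[a,T]_{\mathbb{T}}$ and passes to the limit $T\to\infty$ at the very end.

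The first step is to absorb $w(t)g(t)(G^\sigma(t))^{-\alpha}$ into a single delta derivative. Since $g\ge0$, $G$ is nondecreasing, so $G\le hG^\sigma+(1-h)G\le G^\sigma$ for $h\in[0,1]$; Keller's chain rule applied to $x\mapsto x^{1-\alpha}$ (note $1-\alpha<0$ because $\alpha>\theta+1\ge1$) then gives $\big(G^{1-\alpha}\big)^\Delta\le-(\alpha-1)\,g\,(G^\sigma)^{-\alpha}$. Using the product rule $\big(wG^{1-\alpha}\big)^\Delta=w^\Delta(G^\sigma)^{1-\alpha}+w\big(G^{1-\alpha}\big)^\Delta$ together with the hypothesis $w^\Delta/w\le\theta\,G^\Delta/G^\sigma$, which yields $w^\Delta(G^\sigma)^{1-\alpha}\le\theta\,w\,g\,(G^\sigma)^{-\alpha}$, I obtain
\[
\big(wG^{1-\alpha}\big)^\Delta\le-(\alpha-\theta-1)\,w\,g\,(G^\sigma)^{-\alpha},
\qquad\text{i.e.}\qquad
w\,g\,(G^\sigma)^{-\alpha}\le-\frac{1}{\alpha-\theta-1}\big(wG^{1-\alpha}\big)^\Delta ,
\]
since $\alpha-\theta-1>0$. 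This is where the denominator constant of \eqref{eqq1} is created.

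Inserting this bound into $I$, writing $k^\sigma v^\sigma(K^\sigma)^p=(kvK^p)^\sigma$, and integrating by parts, I get
\[
I\le-\frac{1}{\alpha-\theta-1}\int_{a}^{\infty}\big(wG^{1-\alpha}\big)^\Delta(kvK^p)^\sigma\,\Delta t
=-\frac{1}{\alpha-\theta-1}\Big(\big[wG^{1-\alpha}kvK^p\big]_{a}^{\infty}-\int_{a}^{\infty}wG^{1-\alpha}(kvK^p)^\Delta\,\Delta t\Big).
\]
The boundary term at $\infty$ is nonpositive (as $G^{1-\alpha}\to0$ while the remaining factors are nonnegative) and the one at $a$ vanishes (because $G(a)=K(a)=0$), so both are discarded. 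Expanding $(kvK^p)^\Delta=k^\Delta vK^p+k^\sigma v^\Delta K^p+k^\sigma v^\sigma(K^p)^\Delta$: the first summand is $\le0$ since $k$ is nonincreasing; the hypothesis $v^\Delta/v^\sigma\le\beta\,K^\Delta/K$ gives $k^\sigma v^\Delta K^p\le\beta\,k^\sigma v^\sigma rf\,K^{p-1}$; and Keller's chain rule for $x\mapsto x^p$ (with $K\le hK^\sigma+(1-h)K\le K^\sigma$, using $rf\ge0$ and $p\ge1$) gives $(K^p)^\Delta\le p\,(K^\sigma)^{p-1}rf$. Combining these with $K^{p-1}\le(K^\sigma)^{p-1}$ gives $(kvK^p)^\Delta\le(p+\beta)\,k^\sigma v^\sigma rf\,(K^\sigma)^{p-1}$, which produces the numerator constant $p+\beta$ and leaves
\[
I\le\frac{p+\beta}{\alpha-\theta-1}\int_{a}^{\infty}k^\sigma v^\sigma w\,r\,f\,G^{1-\alpha}\,(K^\sigma)^{p-1}\,\Delta t .
\]

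The last step is the dynamic H\"older inequality with conjugate exponents $p$ and $p/(p-1)$ applied to this integral, splitting $k^\sigma v^\sigma w\,rf\,G^{1-\alpha}(K^\sigma)^{p-1}$ as a product whose $\frac{p}{p-1}$-power factor is exactly the integrand $k^\sigma v^\sigma w g(G^\sigma)^{-\alpha}(K^\sigma)^p$ of $I$ and whose $p$-power factor is $k^\sigma v^\sigma w\,r^pf^p(G^\sigma)^{\alpha(p-1)}g^{-(p-1)}G^{-p(\alpha-1)}$; an exponent check (the powers of $g$ and of $G^\sigma$ cancel, and $G^{-(\alpha-1)}$ survives) shows the split is consistent. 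H\"older then gives
\[
I\le\frac{p+\beta}{\alpha-\theta-1}\left(\int_{a}^{\infty}\frac{k^\sigma v^\sigma w\,r^pf^p(G^\sigma)^{\alpha(p-1)}}{g^{p-1}G^{p(\alpha-1)}}\,\Delta t\right)^{1/p}I^{(p-1)/p},
\]
and cancelling $I^{(p-1)/p}$ (legitimate on the truncated interval, where $I<\infty$) and raising to the power $p$, then letting $T\to\infty$, yields \eqref{eqq1}. The conceptual content is the two Keller chain-rule estimates and the H\"older split; the main obstacle is bookkeeping — keeping every auxiliary inequality ($K^{p-1}\le(K^\sigma)^{p-1}$, the two chain-rule bounds, and the grouping in the product rule for $(kvK^p)^\Delta$) oriented so that the constants accumulate as $\alpha-\theta-1$ in the denominator and $p+\beta$ in the numerator — together with checking that the boundary term at $a$ genuinely vanishes and that the truncation-and-limit argument is valid.
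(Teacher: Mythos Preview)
Your proof is correct and follows the same route as the paper's proof of its Theorem~\ref{thm1} (the $\alpha$-conformable generalization, which specializes to the cited statement at conformable order $1$): the chain-rule estimate on $wG^{1-\alpha}$ producing the constant $\alpha-\theta-1$, integration by parts against $(kvK^p)^\sigma$, the product-rule/chain-rule bound on $(kvK^p)^\Delta$ producing $p+\beta$, and the final H\"older split are all identical in substance. The only difference is cosmetic ordering---you bound the integrand pointwise before integrating by parts, whereas the paper integrates by parts first with $u(t)=-\int_t^\infty w\,g\,(G^\sigma)^{-\alpha}\,\Delta s$ and then bounds $-u(t)$---but the two computations are equivalent.
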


For more results on Hardy-type inequalities 
on time scales we refer to \cite{h3,h5,h6,h10,saker}
and references therein. Here we are interested 
in such inequalities in the fractional sense.

Fractional calculus theory has an important role in mathematical analysis
and applications. Fractional calculus (FC), the theory of integrals 
and derivatives of noninteger order, is a field of research 
with a history dating back to Abel, Riemann and Liouville: 
see \cite{miller1993introduction} for an historical account. 
The most famous and extensively studied fractional operator is given by
\begin{equation*}
I_{a+}^{\alpha}\eta(t)
=\frac{1}{\Gamma(\alpha)}\int_{a}^{x}(x-t)^{\alpha-1}\eta(t)dt,
\end{equation*}
which is called the Riemann--Liouville fractional integral, in honor
of Riemann (1826--1866) and Liouville (1809--1882). The corresponding 
fractional derivative is obtained by composition of the
fractional integral with an integer order derivative.

The definitions of fractional integrals and derivatives are not unique, 
and many definitions of fractional derivative operators  
were introduced and successfully applied to solve complex systems 
in science and engineering: see \cite{daftardar2007analysis,kilbas2006theory,podlubny1998fractional}. 
The study of fractional dynamic equations is nowadays widespread around the world, being useful 
in pure and applied mathematics, physics, engineering, biology, economics, etc. 
They use an integral in their formulation, especially Cauchy's integral formula with its modifications. 
Therefore, they involve difficult calculations. It is well-known that Riemann--Liouville and Caputo 
fractional derivatives do not satisfy the usual derivative rules for the product, 
quotient and chain rules. Moreover, the mean value theorem and Rolle's theorem are not valid 
for the definitions of Riemann--Liouville and Caputo fractional derivatives.

Recently, just based on the classical limit definition of derivative, 
Khalil et al. \cite{khalil2014new} proposed a much simpler definition 
of a fractional derivative of a function $f:\mathbb{R}^{+}\rightarrow \mathbb{R}$,
called the conformable derivative $T_{\alpha}f(t)$, $\alpha\in(0,1]$, defined by
\begin{equation*}
T_{\alpha}f(t)=\lim_{\epsilon\rightarrow 0}
\frac{f(t+\epsilon t^{1-\alpha})-f(t)}{\epsilon}
\end{equation*}
for all $t>0$. This definition found wide resonance in the scientific
community interested in fractional calculus, due to the fact that calculating 
the derivative by this definition is trivial compared with the definitions 
that are based on integration. The researchers in \cite{khalil2014new}
also suggested a definition for the $\alpha$-conformable integral of a
function $\eta$ as follows:
\begin{equation*}
\int_{a}^{b}\eta(t)d_{\alpha}t=\int_{a}^{b}\eta(t)t^{\alpha-1}dt.
\end{equation*}
After the seminal paper \cite{khalil2014new}, Abdeljawad \cite{abdeljawad2015conformable} 
made an extensive research of the newly introduced conformable calculus. In his work, 
he generalizes the definition of conformable derivative $T_{\alpha}^{a}f(t)$ 
of $f:\mathbb{R}^{+}\rightarrow \mathbb{R}$ for $t>a\in \mathbb{R}^{+}$ as
\begin{equation*}
T_{\alpha}^{a}f(t)=\lim_{\epsilon\rightarrow 0}
\frac{f(t+\epsilon (t-a)^{1-\alpha})-f(t)}{\epsilon}.
\end{equation*}
Benkhettou et al. \cite{sdm31} introduced a conformable calculus on an
arbitrary time scale, which is a natural extension of the conformable calculus.
Based on such results, in the last few years many authors pointed out that derivatives 
and integrals of non-integer order are very suitable for the description of properties 
of various real materials, e.g. polymers. Fractional derivatives provide 
an excellent instrument for the description of memory and hereditary properties 
of various materials and processes. This seems to be the main advantages 
of fractional derivatives in comparison with classical integer-order models.

By using the conformable fractional calculus, many inequalities 
have been investigated like Hardy's \cite{sk2,zk}, 
Hermite--Hadamard's \cite{chu2017inequalities,khan2018hermite,set2017hermite}, 
Opial's \cite{sarikaya2017new,sarikayaopial} and Steffensen's inequalities 
\cite{sarikaya2017steffensen}. For example, in 2020, Saker et al. \cite{sk2} 
proved a $\alpha$-conformable version of Theorems~\ref{th2.5} and \ref{th2.6} 
on time scales as follows.

\begin{theorem}[See \cite{sk2}]
Let $\mathbb{T}$ be a time scale and $ 1\leqslant c \leqslant k$. Define
\begin{eqnarray*}
\chi(x)=\int_a^x \lambda(s) \Delta_\alpha s, \ \ \textit{and} \ \ 
\Theta(x)= \int_a^x \lambda(s)\xi(s) \Delta_\alpha s.
\end{eqnarray*}
If 
$$ 
\Theta(\infty) < \infty,\ \ \textit{and} \ \ 
\int_a^{\infty} \frac{\lambda(s)}{\big(\chi^{\sigma}(s)\big)^{c-\alpha+1}}  
\Delta_{\alpha} s < \infty, 
$$
then
\begin{eqnarray*}
\int_a^{\infty} \frac{\lambda(x)}{\big(\chi^{\sigma}(x)\big)^{c-\alpha+1}} 
\big(\Theta(x)\big)^k  \Delta_\alpha x 
\leqslant \bigg(\frac{k}{c-\alpha}\bigg)^k 
\int_a^{\infty}  
\dfrac{\lambda(x) (\chi(x))^{K(\alpha-c)}}{(\chi^{\sigma}(x))^{(1-k)(c-\alpha+1)}}  
\xi^k(x) \Delta_\alpha x.
\end{eqnarray*}
\end{theorem}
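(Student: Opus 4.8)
The plan is to carry the classical three-step Hardy scheme behind Theorems~\ref{th2.5} and~\ref{th2.6} over to the $\alpha$-conformable calculus on time scales of \cite{sdm31}. Observe that $\chi^{\Delta_\alpha}=\lambda\ge0$ and $\Theta^{\Delta_\alpha}=\lambda\xi\ge0$, so $\chi$ and $\Theta$ are nondecreasing, $\Theta(a)=0$, and $\Theta\le\Theta^\sigma$; hence it suffices to bound
\[
I:=\int_a^\infty \frac{\lambda(x)\big(\Theta^\sigma(x)\big)^k}{\big(\chi^\sigma(x)\big)^{c-\alpha+1}}\,\Delta_\alpha x ,
\]
which dominates the left-hand side of the claimed inequality. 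From $1\le c\le k$ and $0<\alpha\le1$ we get $k\ge1$, $c>\alpha$ (the only excluded case $c=\alpha=1$ being the one in which the constant is meaningless), and $k+\alpha-c>0$; and since $\big(\Theta^\sigma\big)^k\le\big(\Theta(\infty)\big)^k$, the two hypotheses $\Theta(\infty)<\infty$ and $\int_a^\infty \lambda(x)\big(\chi^\sigma(x)\big)^{-(c-\alpha+1)}\Delta_\alpha x<\infty$ guarantee $I<\infty$. If $I=0$ there is nothing to prove, so assume $0<I<\infty$.

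First I would apply the $\alpha$-conformable Keller (P\"otzsche) chain rule to $x\mapsto\chi(x)^{\alpha-c}$. Because $\chi$ is nondecreasing and $\alpha-c<0$, estimating the intermediate value at $\chi^\sigma$ yields $\big(\chi^{\alpha-c}\big)^{\Delta_\alpha}(x)\le(\alpha-c)\big(\chi^\sigma(x)\big)^{\alpha-c-1}\lambda(x)$, equivalently
\[
\frac{\lambda(x)}{\big(\chi^\sigma(x)\big)^{c-\alpha+1}}\le\frac{-1}{c-\alpha}\,\big(\chi^{\alpha-c}\big)^{\Delta_\alpha}(x).
\]
Multiplying by $\big(\Theta^\sigma(x)\big)^k=\big((\Theta^k)^\sigma\big)(x)\ge0$, integrating, and applying the $\alpha$-conformable integration by parts formula with $u=\chi^{\alpha-c}$ and $v=\Theta^k$, I expect
\[
I\le\frac{-1}{c-\alpha}\Big[\chi^{\alpha-c}\Theta^k\Big]_a^\infty+\frac{1}{c-\alpha}\int_a^\infty\chi^{\alpha-c}(x)\,\big(\Theta^k\big)^{\Delta_\alpha}(x)\,\Delta_\alpha x .
\]
The boundary contribution at $\infty$ is a product of nonnegative limits multiplied by the negative factor $-1/(c-\alpha)$, hence $\le0$ and may be dropped; the one at $a$ is handled by first integrating over $[\tau,\infty)_\mathbb{T}$ and letting $\tau\to a^+$, where $\chi^{\alpha-c}(\tau)\Theta^k(\tau)\le\big(\sup_{[a,\tau]}\xi\big)^k\chi^{k+\alpha-c}(\tau)\to0$ since $k+\alpha-c>0$.

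It then remains to estimate $\big(\Theta^k\big)^{\Delta_\alpha}$. A second use of the Keller chain rule, on $x\mapsto\Theta(x)^k$ with $k\ge1$ and $\Theta$ nondecreasing, gives $\big(\Theta^k\big)^{\Delta_\alpha}(x)\le k\big(\Theta^\sigma(x)\big)^{k-1}\lambda(x)\xi(x)$, whence $I\le\frac{k}{c-\alpha}\int_a^\infty\chi^{\alpha-c}(x)\big(\Theta^\sigma(x)\big)^{k-1}\lambda(x)\xi(x)\,\Delta_\alpha x$. For $k=1$ this is already the assertion. For $k>1$ I would split the integrand as
\[
\Big(\lambda\,\xi^k\,\chi^{k(\alpha-c)}\big(\chi^\sigma\big)^{(k-1)(c-\alpha+1)}\Big)^{1/k}\cdot\Big(\lambda\,\big(\Theta^\sigma\big)^k\big(\chi^\sigma\big)^{-(c-\alpha+1)}\Big)^{(k-1)/k}
\]
(the exponents of $\lambda$, $\xi$, $\chi$, $\chi^\sigma$ and $\Theta^\sigma$ matching on inspection) and apply the $\alpha$-conformable H\"older inequality with conjugate exponents $k$ and $k/(k-1)$, obtaining $I\le\frac{k}{c-\alpha}\big(\int_a^\infty \lambda\,\chi^{k(\alpha-c)}\big(\chi^\sigma\big)^{-(1-k)(c-\alpha+1)}\xi^k\,\Delta_\alpha x\big)^{1/k}I^{(k-1)/k}$. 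Dividing by $I^{(k-1)/k}\in(0,\infty)$ and raising to the $k$-th power yields precisely the stated inequality (reading $K$ as $k$), since $I$ dominates its left-hand side.

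The combinatorial skeleton is identical to the integer-order case, so the main obstacle is the first phase: one must verify that the $\alpha$-conformable Keller chain rule, product rule and integration by parts formula are available in exactly the forms used above — in particular with the forward-jump operator $\sigma$ in the correct positions — and justify the limiting argument at the endpoint $a$ together with the sign of the contribution at $\infty$. Once those conformable tools are secured, the two monotonicity estimates and the H\"older bookkeeping are routine.
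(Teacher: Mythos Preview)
The paper does not give its own proof of this statement; it is quoted from \cite{sk2} as background. Your argument is correct and is exactly the three-step Hardy scheme---chain-rule estimate on $\chi^{\alpha-c}$, $\alpha$-conformable integration by parts with vanishing boundary terms, chain-rule estimate on $\Theta^k$, then H\"older with exponents $k$ and $k/(k-1)$---that the paper itself deploys in the proofs of Theorems~\ref{thm1}--\ref{thm4}, so your proposal matches both the cited source's method and the paper's own techniques.
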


\begin{theorem}[See \cite{sk2}]
Let $\mathbb{T}$ be a time scale, $0\leqslant c < 1$ and $k>1$. Define
\begin{eqnarray*}
\chi(x)=\int_a^x \lambda(s) \Delta_\alpha s 
\ \ \textit{and}  \ \ 
\Theta(x)= \int_x^\infty \lambda(s)\xi(s) \Delta_\alpha s.
\end{eqnarray*}
If 
$$ 
\Theta(\infty) < \infty
\ \ \textit{and} \ \ 
\int_a^{\infty} \frac{\lambda(s)}{\big(\chi^{\sigma}(s)\big)^{c-\alpha+1}}  
\Delta_{\alpha} s < \infty, 
$$
then
\begin{eqnarray*}
\int_a^{\infty} \frac{\lambda(x)}{\big(\chi^{\sigma}(x)\big)^{c-\alpha+1}} 
\big(\Theta^\sigma(x)\big)^k  \Delta_\alpha x \leqslant \bigg(
\frac{k}{c-\alpha}\bigg)^k \int_a^{\infty} 
(\chi^{\sigma}(x))^{k-c+\alpha-1} \lambda(x)\xi^k(x) \Delta_\alpha x.
\end{eqnarray*}
\end{theorem}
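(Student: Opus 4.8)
The plan is to read this as a conformable, time-scale version of the Leindler--Copson inequality \eqref{saker2}, to which it reduces when the conformable order is $1$ (the constant then being $(k/(1-c))^k$), and to prove it by the standard three-step scheme: rewrite the weight $\lambda/(\chi^\sigma)^{c-\alpha+1}$ as a conformable $\Delta_\alpha$-derivative via Keller's chain rule, integrate by parts to move the $\Delta_\alpha$ onto $\Theta^k$, and close the estimate with the dynamic H\"older inequality. I would in fact establish the slightly stronger inequality with $\Theta$ in place of $\Theta^\sigma$ on the left; since $\Theta^{\Delta_\alpha}=-\lambda\xi\le0$ we have $0\le\Theta^\sigma\le\Theta$, so this implies the stated one. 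Throughout I use the conformable fundamental theorem of calculus on $\mathbb{T}$, giving $\chi^{\Delta_\alpha}=\lambda$ and $\Theta^{\Delta_\alpha}=-\lambda\xi$; the monotonicity $\chi\le\chi^\sigma$; and the boundedness $\Theta\le\Theta(a)<\infty$. For the argument (and for the constant to be positive) one needs $c<\alpha$, so that $\alpha-c>0$; I write $\alpha-c$ in the denominator below.

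First, applying Keller's chain rule to $x\mapsto\chi^{\alpha-c}(x)$ and using that $x\mapsto x^{\alpha-c-1}$ is non-increasing (since $\alpha-c-1\le0$) together with $\chi\le\chi^\sigma$, the integral-mean factor it produces is $\ge(\chi^\sigma)^{\alpha-c-1}$, so pointwise
\begin{equation*}
\big(\chi^{\alpha-c}\big)^{\Delta_\alpha}\ \ge\ (\alpha-c)\,\frac{\lambda}{\big(\chi^\sigma\big)^{c-\alpha+1}}\ \ge\ 0 .
\end{equation*}
Multiplying by $\Theta^k\ge0$, integrating over $[a,\infty)_{\mathbb{T}}$, and integrating by parts in the form $\int u^{\Delta_\alpha}v\,\Delta_\alpha=[uv]-\int u^\sigma v^{\Delta_\alpha}\,\Delta_\alpha$ with $u=\chi^{\alpha-c}$ and $v=\Theta^k$, I obtain
\begin{equation*}
\int_a^\infty\frac{\lambda\,\Theta^k}{\big(\chi^\sigma\big)^{c-\alpha+1}}\,\Delta_\alpha x\ \le\ \frac{1}{\alpha-c}\Big[\chi^{\alpha-c}\,\Theta^k\Big]_a^\infty-\frac{1}{\alpha-c}\int_a^\infty\big(\chi^\sigma\big)^{\alpha-c}\big(\Theta^k\big)^{\Delta_\alpha}\,\Delta_\alpha x .
\end{equation*}
The boundary term vanishes: at $x=a$ because $\chi(a)=0$ and $\alpha-c>0$; at $x=\infty$ because $\Theta(\infty)=0$ and a H\"older estimate for the tail $\Theta(b)$, combined with the two finiteness hypotheses, forces $\chi^{\alpha-c}(b)\,\Theta^k(b)\to0$ (one first integrates over $[a,b]_{\mathbb{T}}$ and lets $b\to\infty$).

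Next I estimate $-(\Theta^k)^{\Delta_\alpha}$. Keller's chain rule applied to the convex power $x\mapsto x^k$, $k>1$, using $\Theta^\sigma\le\Theta$ and $x\mapsto x^{k-1}$ increasing, bounds the integral-mean factor by $\Theta^{k-1}$; since $\Theta^{\Delta_\alpha}=-\lambda\xi\le0$ this gives $-(\Theta^k)^{\Delta_\alpha}\le k\,\Theta^{k-1}\,\lambda\,\xi$. Substituting,
\begin{equation*}
\int_a^\infty\frac{\lambda\,\Theta^k}{\big(\chi^\sigma\big)^{c-\alpha+1}}\,\Delta_\alpha x\ \le\ \frac{k}{\alpha-c}\int_a^\infty\big(\chi^\sigma\big)^{\alpha-c}\,\Theta^{k-1}\,\lambda\,\xi\,\Delta_\alpha x .
\end{equation*}
To the right-hand integral I apply the dynamic H\"older inequality with exponents $k$ and $k/(k-1)$, splitting the integrand so that the $k$-th power of the first factor is $\big(\chi^\sigma\big)^{k-c+\alpha-1}\lambda\,\xi^k$ and the $k/(k-1)$-th power of the second factor is $\lambda\,\Theta^k\big(\chi^\sigma\big)^{-(c-\alpha+1)}$ (a check of exponents confirms their product is exactly the integrand). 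This yields
\begin{equation*}
\int_a^\infty\frac{\lambda\,\Theta^k}{\big(\chi^\sigma\big)^{c-\alpha+1}}\,\Delta_\alpha x\ \le\ \frac{k}{\alpha-c}\bigg(\int_a^\infty\big(\chi^\sigma\big)^{k-c+\alpha-1}\lambda\,\xi^k\,\Delta_\alpha x\bigg)^{1/k}\bigg(\int_a^\infty\frac{\lambda\,\Theta^k}{\big(\chi^\sigma\big)^{c-\alpha+1}}\,\Delta_\alpha x\bigg)^{(k-1)/k} .
\end{equation*}
The last factor is finite by the two hypotheses ($\Theta\le\Theta(a)<\infty$ and $\int_a^\infty\lambda\big(\chi^\sigma\big)^{-(c-\alpha+1)}\,\Delta_\alpha x<\infty$); if it is zero the inequality is trivial, otherwise divide it out and raise to the $k$-th power to get the asserted bound with $\Theta$, hence a fortiori with $\Theta^\sigma$, on the left.

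The step I expect to be the main obstacle is the bookkeeping of the forward-jump operator $\sigma$ together with the vanishing of the boundary term: one has to verify that every jump comparison ($\chi$ against $\chi^\sigma$ under the exponent $\alpha-c-1\le0$, $\Theta$ against $\Theta^\sigma$ under $k-1>0$) and the post-H\"older reconstruction of $\big(\chi^\sigma\big)^{k-c+\alpha-1}\lambda\xi^k$ and of $\lambda\Theta^k\big(\chi^\sigma\big)^{-(c-\alpha+1)}$ point the right way, and that $\chi^{\alpha-c}(b)\Theta^k(b)\to0$ as $b\to\infty$ --- it is precisely here that $\Theta(\infty)<\infty$ and $\int_a^\infty\lambda\big(\chi^\sigma\big)^{-(c-\alpha+1)}\,\Delta_\alpha x<\infty$ are used, through a tail H\"older estimate. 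For $\mathbb{T}=\mathbb{R}$ all the $\sigma$'s collapse, $\Delta_\alpha x$ becomes $(x-a)^{\alpha-1}dx$, and the computation is the classical integration-by-parts proof of Hardy's inequality \eqref{a6}; the genuinely new part is making each step survive on an arbitrary time scale within the conformable calculus.
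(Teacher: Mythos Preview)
This theorem is quoted in the paper's introduction as a result of Saker et al.\ \cite{sk2}; the paper does not supply its own proof of it, so there is no proof in the paper to compare against directly. That said, your three-step scheme --- Keller's chain rule to dominate $\lambda/(\chi^\sigma)^{c-\alpha+1}$ by $(\chi^{\alpha-c})^{\Delta_\alpha}$, conformable integration by parts, a second chain-rule estimate on $(\Theta^k)^{\Delta_\alpha}$, and then the dynamic H\"older inequality with exponents $k$ and $k/(k-1)$ to self-improve --- is precisely the template the paper uses for its own main results, Theorems~\ref{thm1}--\ref{thm4}; Theorem~\ref{thm2} in particular has the same $\chi=\int_a^t$, $\Theta=\int_t^\infty$ structure and is proved step-for-step the way you outline. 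Your observation that one may prove the stronger version with $\Theta$ in place of $\Theta^\sigma$ on the left, and your flag that the constant must be $(k/(\alpha-c))^k$ (requiring $c<\alpha$) rather than $(k/(c-\alpha))^k$ as printed, are both correct. On the boundary term $\chi^{\alpha-c}(b)\Theta^k(b)\to 0$: the paper's own proofs dispose of the analogous terms by simply writing ``$F(\infty)=0$ and $u(a)=0$'' without further justification, so your treatment, while not fully fleshed out, is already at least as careful as what the paper does.
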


In 2021, Zakarya et al. \cite{zk} obtained $\alpha$-conformable versions 
on time scales of Theorem~\ref{th2.9}.

\begin{theorem}[See \cite{zk}]
Assume that $\mathbb{T}$ is a time scale with $\omega \in (0,\infty)_{\mathbb{T}}$. 
Let $k\leqslant 0< h< 1$, $\alpha \in (0,1]$, and define
\begin{eqnarray*}
\chi(t)= \int_t^\infty \lambda(s) \Delta_\alpha s 
\ \ \textit{and}  \ \ 
\Theta(t)= \int_{\omega}^{t} \lambda(s)\xi(s) \Delta_\alpha s.
\end{eqnarray*}
Then,
\begin{eqnarray*}
\int_{\omega}^\infty \dfrac{\lambda(t)}{\chi^{k-\alpha+1}(t)} 
(\Theta^{\sigma}(t))^h \Delta_\alpha t 
\geqslant \bigg(\frac{h}{\alpha-m}\bigg)^h 
\int_{\omega}^{\infty} \lambda(t) \xi^h(t) \chi^{h-m+\alpha-1}(t) \Delta_\alpha t.
\end{eqnarray*}
\end{theorem}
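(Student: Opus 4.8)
The plan is to transplant the classical Hardy argument into the conformable time-scale calculus: introduce a conformable derivative of a power of $\chi$ on the left-hand side, integrate by parts, estimate the two chain-rule factors that appear, and close the loop with the dynamic H\"older inequality in its reversed form. Throughout I write
\[
I:=\int_{\omega}^{\infty}\frac{\lambda(t)}{\chi^{m-\alpha+1}(t)}\bigl(\Theta^{\sigma}(t)\bigr)^{h}\,\Delta_{\alpha}t
\]
for the quantity on the left (so $k=m$ in the statement), and I use the monotonicities coming from $\lambda\ge0$: $\chi^{\Delta_{\alpha}}=-\lambda\le0$, $\chi^{\sigma}\le\chi$, $\Theta^{\Delta_{\alpha}}=\lambda\xi\ge0$, $\Theta^{\sigma}\ge\Theta$, and $\Theta(\omega)=0$.

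Step~1. Rewrite $\lambda(t)/\chi^{m-\alpha+1}(t)=-\chi^{\Delta_{\alpha}}(t)\,\chi^{\alpha-1-m}(t)$ and apply Keller's chain rule to $\chi^{\alpha-m}$ (note $\alpha-m>0$):
\[
\bigl(\chi^{\alpha-m}\bigr)^{\Delta_{\alpha}}(t)=(\alpha-m)\Bigl\{\int_{0}^{1}\bigl[(1-r)\chi(t)+r\chi^{\sigma}(t)\bigr]^{\alpha-m-1}\,dr\Bigr\}\chi^{\Delta_{\alpha}}(t).
\]
Since the bracketed base is $\le\chi(t)$ and $\alpha-m-1\ge0$ (for $\alpha=1$ this is exactly the stated $m\le0$), the inner integral is $\le\chi^{\alpha-m-1}(t)$; multiplying by the nonpositive factor $(\alpha-m)\chi^{\Delta_{\alpha}}(t)$ reverses this, so that
\[
\frac{\lambda(t)}{\chi^{m-\alpha+1}(t)}\ \ge\ -\frac{1}{\alpha-m}\,\bigl(\chi^{\alpha-m}\bigr)^{\Delta_{\alpha}}(t).
\]
Multiplying by $\bigl(\Theta^{\sigma}(t)\bigr)^{h}\ge0$, integrating over $[\omega,\infty)_{\mathbb{T}}$ and using the conformable integration-by-parts formula — the boundary term $\bigl[\chi^{\alpha-m}\Theta^{h}\bigr]_{\omega}^{\infty}$ vanishing because $\Theta(\omega)=0$ and $\chi^{\alpha-m}(t)\to0$ as $t\to\infty$ — gives
\[
I\ \ge\ \frac{1}{\alpha-m}\int_{\omega}^{\infty}\chi^{\alpha-m}(t)\,\bigl(\Theta^{h}\bigr)^{\Delta_{\alpha}}(t)\,\Delta_{\alpha}t.
\]

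Step~2. Apply Keller's chain rule to $\Theta^{h}$: the intermediate value lies in $[\Theta(t),\Theta^{\sigma}(t)]$, and since $h-1<0$ and $\Theta^{\Delta_{\alpha}}=\lambda\xi\ge0$ one obtains $\bigl(\Theta^{h}\bigr)^{\Delta_{\alpha}}(t)\ge h\,\bigl(\Theta^{\sigma}(t)\bigr)^{h-1}\lambda(t)\xi(t)$, whence $I\ge\frac{h}{\alpha-m}J$ with $J:=\int_{\omega}^{\infty}\lambda\xi\,\chi^{\alpha-m}\bigl(\Theta^{\sigma}\bigr)^{h-1}\,\Delta_{\alpha}t$. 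To remove the factor $\bigl(\Theta^{\sigma}\bigr)^{h-1}$, factor the integrand of $J$ as
\[
\lambda\xi\,\chi^{\alpha-m}\bigl(\Theta^{\sigma}\bigr)^{h-1}=\Bigl(\lambda\xi^{h}\chi^{\,h-m+\alpha-1}\Bigr)^{1/h}\Bigl(\tfrac{\lambda}{\chi^{m-\alpha+1}}\bigl(\Theta^{\sigma}\bigr)^{h}\Bigr)^{1-1/h},
\]
which is verified by matching the exponents of $\lambda$, $\xi$, $\chi$ and $\Theta^{\sigma}$, and apply the dynamic H\"older inequality in its reversed form, with exponents $h\in(0,1)$ and $h/(h-1)<0$, to deduce
\[
J\ \ge\ \Bigl(\int_{\omega}^{\infty}\lambda(t)\xi^{h}(t)\chi^{\,h-m+\alpha-1}(t)\,\Delta_{\alpha}t\Bigr)^{1/h}\,I^{(h-1)/h}.
\]

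Step~3. Combining the last three displays, $I\ge\frac{h}{\alpha-m}\bigl(\int_{\omega}^{\infty}\lambda\xi^{h}\chi^{\,h-m+\alpha-1}\,\Delta_{\alpha}t\bigr)^{1/h}I^{(h-1)/h}$; assuming $0<I<\infty$ (the cases $I=0$ or $I=\infty$ being trivial), dividing by $I^{(h-1)/h}$ and raising to the power $h>0$ produces exactly the asserted inequality. The step I expect to be the main obstacle is keeping the two chain-rule estimates pointing in the right direction: each of them reverses upon multiplication by a conformable derivative of fixed sign, and it is precisely the hypotheses $\alpha-m-1\ge0$ and $h<1$ that make both reversals yield a lower bound for $I$; a secondary technical point is the rigorous justification that the integration-by-parts boundary term at infinity vanishes, which rests on the convergence assumptions $\chi(\infty)=0$ and $\Theta(\infty)<\infty$ implicit in the hypotheses.
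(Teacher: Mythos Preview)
The paper does not contain its own proof of this statement: it is quoted verbatim from \cite{zk} in the introduction as background, alongside the companion result, and no argument is given. So there is nothing in the present paper to compare your proposal against line by line.

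That said, your proposal follows exactly the template the paper uses for its own main results (Theorems~\ref{thm1}--\ref{thm4}): integration by parts to trade the weight $\lambda\chi^{\alpha-m-1}$ for the conformable derivative of $\chi^{\alpha-m}$, Keller's chain rule to control the two derivative factors, and a H\"older step (here necessarily the reversed H\"older, since $0<h<1$) to close. This is the standard route, and almost certainly the one taken in \cite{zk} as well.

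One point you already flag deserves emphasis. Your Step~1 estimate needs the exponent $\alpha-m-1$ to be nonnegative so that $[\text{convex average}]^{\alpha-m-1}\le\chi^{\alpha-m-1}$; with the stated hypotheses $m\le0$ and $\alpha\in(0,1]$ this holds automatically only when $\alpha=1$ (the classical case of Theorem~\ref{th2.9}). For $\alpha<1$ and $m\in(\alpha-1,0]$ the inequality reverses and your lower bound on $I$ is lost. This is not a defect in your strategy but a genuine constraint on the parameter range, and it suggests either that the hypothesis in the quoted statement should really read $m\le\alpha-1$, or that the source \cite{zk} handles the borderline range by a separate argument. Everything else in your outline --- the vanishing of the boundary term via $\Theta(\omega)=0$ and $\chi(\infty)=0$, the chain-rule lower bound $(\Theta^{h})^{\Delta_{\alpha}}\ge h(\Theta^{\sigma})^{h-1}\lambda\xi$ from $h-1<0$, the exponent bookkeeping in the H\"older factorisation, and the final algebraic rearrangement --- is correct.
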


\begin{theorem}[See \cite{zk}]
Assume that $\mathbb{T}$ is a time scale with $\omega \in (0,\infty)_{\mathbb{T}}$,  
$0<h< 1< k$ and $\alpha \in (0,1]$. Define
\begin{eqnarray*}
\chi(t)= \int_t^\infty \lambda(s) \Delta_\alpha s 
\ \ \textit{and}  \ \ 
\overline{\Theta}(t)= \int_{t}^{\infty} \lambda(s)\xi(s) \Delta_\alpha s
\end{eqnarray*}
such that 
$$ 
M:= \inf_{t \in \mathbb{T}} \frac{\chi^{\sigma}(t)}{\chi(t)} >0.
$$
Then,
\begin{eqnarray*}
\int_{\omega}^\infty \dfrac{\lambda(t)}{\chi^{k-\alpha+1}(t)} 
(\overline{\Theta}(t))^h \Delta_\alpha t 
\geqslant \bigg(\frac{hM^{k-\alpha+1}}{k-\alpha}\bigg)^h  
\int_{\omega}^{\infty} \lambda(t) \xi^h(t) \chi^{h-k+\alpha-1}(t) \Delta_\alpha t.
\end{eqnarray*}
\end{theorem}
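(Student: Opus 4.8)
Write $m:=k-\alpha+1$ and $\beta:=k-\alpha=m-1$, so that $\beta>0$ (since $k>1\ge\alpha$) and the target inequality reads $I\ \ge\ \bigl(\tfrac{hM^{m}}{m-1}\bigr)^{h}\int_{\omega}^{\infty}\lambda\,\xi^{h}\,\chi^{h-m}\,\Delta_\alpha t$, where $I:=\int_{\omega}^{\infty}\lambda\,\chi^{-m}\,\overline{\Theta}^{h}\,\Delta_\alpha t$. The first thing I would record are the identities $\chi^{\Delta_\alpha}=-\lambda$ and $\overline{\Theta}^{\Delta_\alpha}=-\lambda\xi$, which show that $\chi$ and $\overline{\Theta}$ are nonincreasing; hence $\chi^{\sigma}\le\chi$ (in particular $M\le1$), $\overline{\Theta}^{\sigma}\le\overline{\Theta}$, and both $\chi^{-\beta}$ and $\overline{\Theta}^{h}$ are $\Delta_\alpha$-differentiable with predictable signs. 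The plan then follows the three tools named in the abstract: a Keller-type chain rule to manufacture a $\Delta_\alpha$-derivative inside $I$, the $\alpha$-conformable integration-by-parts formula, and — because $0<h<1$ — a reverse dynamic H\"older inequality.

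\emph{Step 1.} Apply Keller's chain rule on time scales to $x\mapsto x^{-\beta}$ composed with $\chi$: $(\chi^{-\beta})^{\Delta_\alpha}(t)=\beta\lambda(t)\int_{0}^{1}\bigl[(1-r)\chi(t)+r\chi^{\sigma}(t)\bigr]^{-\beta-1}dr$. Since the bracket lies in $[\chi^{\sigma}(t),\chi(t)]$ and $x\mapsto x^{-\beta-1}$ is decreasing, the bracket raised to $-\beta-1$ is $\le(\chi^{\sigma}(t))^{-\beta-1}\le M^{-\beta-1}\chi(t)^{-\beta-1}$, which gives $\lambda(t)\chi(t)^{-m}\ \ge\ \tfrac{M^{m}}{m-1}\,(\chi^{-(m-1)})^{\Delta_\alpha}(t)$. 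Multiplying by $\overline{\Theta}(t)^{h}\ge0$ and $\Delta_\alpha$-integrating, $I\ \ge\ \tfrac{M^{m}}{m-1}\int_{\omega}^{\infty}(\chi^{-(m-1)})^{\Delta_\alpha}(t)\,\overline{\Theta}(t)^{h}\,\Delta_\alpha t$.

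\emph{Steps 2--3.} Integrate by parts in that last integral, differentiating $\chi^{-(m-1)}$; discarding the boundary term at $+\infty$ (it is zero under the natural convergence assumption $\chi^{-(m-1)}\overline{\Theta}^{h}\to0$), what survives is $-\int_{\omega}^{\infty}(\chi^{\sigma})^{-(m-1)}(\overline{\Theta}^{h})^{\Delta_\alpha}\,\Delta_\alpha t$. Now invoke Keller's chain rule again, this time for $x\mapsto x^{h}$ composed with $\overline{\Theta}$: since $h-1<0$ and $\overline{\Theta}^{\sigma}\le\overline{\Theta}$, one obtains $-(\overline{\Theta}^{h})^{\Delta_\alpha}\ge h\lambda\xi\,\overline{\Theta}^{h-1}$, and together with $(\chi^{\sigma})^{-(m-1)}\ge\chi^{-(m-1)}$ this yields $I\ \ge\ \tfrac{hM^{m}}{m-1}\int_{\omega}^{\infty}\lambda\,\xi\,\chi^{-(m-1)}\,\overline{\Theta}^{h-1}\,\Delta_\alpha t$. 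Finally, split the integrand as $fg$ with $f:=\lambda^{1/h}\xi\,\chi^{(h-m)/h}$ and $g:=\lambda^{(h-1)/h}\chi^{m(1-h)/h}\,\overline{\Theta}^{h-1}$, so that $f^{h}=\lambda\xi^{h}\chi^{h-m}$ and $g^{h/(h-1)}=\lambda\chi^{-m}\overline{\Theta}^{h}$; the reverse dynamic H\"older inequality with conjugate exponents $h$ and $h/(h-1)$ gives $\int_{\omega}^{\infty}fg\,\Delta_\alpha t\ \ge\ \bigl(\int_{\omega}^{\infty}\lambda\xi^{h}\chi^{h-m}\,\Delta_\alpha t\bigr)^{1/h}I^{(h-1)/h}$. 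Substituting, dividing by the (assumed positive and finite) quantity $I^{(h-1)/h}$ and raising to the $h$-th power delivers precisely $I\ \ge\ \bigl(\tfrac{hM^{m}}{m-1}\bigr)^{h}\int_{\omega}^{\infty}\lambda\,\xi^{h}\,\chi^{h-m}\,\Delta_\alpha t$.

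\emph{Expected obstacle.} The fragile move is the integration by parts in Steps 2--3: one must pin down the conformable integration-by-parts formula on $[\omega,\infty)_{\mathbb{T}}$ and, above all, dispose of its boundary contribution at $t=\omega$, which enters with the unfavourable sign; handling it cleanly is exactly the point where a further hypothesis is needed — a normalization or decay condition on $\chi$ and $\overline{\Theta}$ near $\omega$, or a monotonicity condition making that boundary term harmless. The remaining verifications are routine but should be stated: the two Keller-type chain rules and the reverse H\"older inequality in the $\alpha$-conformable time-scale setting (rd-continuity, finiteness of $\int_{\omega}^{\infty}g^{h/(h-1)}\Delta_\alpha t=I$, strict positivity of $I$), all of which mirror their ordinary $\Delta$-calculus counterparts.
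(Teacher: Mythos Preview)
This theorem is not proved in the present paper; it is quoted from \cite{zk} as background, so there is no ``paper's own proof'' to compare against. Your overall architecture --- chain rule to produce a $\Delta_\alpha$-derivative, integration by parts, a second chain rule, then (reverse) H\"older --- is exactly the template the paper uses for its own Theorems~\ref{thm1}--\ref{thm4} and is almost certainly what \cite{zk} does as well.

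That said, the gap you yourself flag in the ``Expected obstacle'' paragraph is real and is not a cosmetic issue. In Steps~2--3 you write that after integrating by parts ``what survives is $-\int_{\omega}^{\infty}(\chi^{\sigma})^{-(m-1)}(\overline{\Theta}^{h})^{\Delta_\alpha}\,\Delta_\alpha t$''. This is not correct: the boundary contribution at $t=\omega$ is $-\chi(\omega)^{-(m-1)}\overline{\Theta}(\omega)^{h}<0$, and since you are chasing a \emph{lower} bound on $I$ you cannot simply drop a negative term on the right. Concretely,
\[
\int_{\omega}^{\infty}(\chi^{-(m-1)})^{\Delta_\alpha}\overline{\Theta}^{h}\,\Delta_\alpha t
\;=\;-\,\chi(\omega)^{-(m-1)}\overline{\Theta}(\omega)^{h}
\;+\;\int_{\omega}^{\infty}(\chi^{\sigma})^{-(m-1)}\bigl[-(\overline{\Theta}^{h})^{\Delta_\alpha}\bigr]\,\Delta_\alpha t,
\]
so your chain of inequalities breaks precisely here. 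The alternative of integrating by parts \emph{before} applying the chain rule (taking $u(t)=\int_{\omega}^{t}\lambda\chi^{-m}\,\Delta_\alpha s$ so that $u(\omega)=0$) does kill the boundary term in the IBP, but then the lower bound on $u^{\sigma}$ via the chain rule again produces the unwanted $-\chi(\omega)^{-(m-1)}$. In the paper's own theorems this never arises because one of the two factors always vanishes at the finite endpoint (e.g.\ $K(a)=0$ in Theorem~\ref{thm1}); for the statement at hand neither $\chi$ nor $\overline{\Theta}$ vanishes at $\omega$, so an extra hypothesis or a different organization of the estimate is genuinely needed, and you should consult \cite{zk} (or \cite{sk3} for the $\alpha=1$ case) to see how it is handled there.
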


Here, we prove new Hardy-type dynamic inequalities 
via the $\alpha$-conformable calculus on time scales.
Our inequalities have a completely new form and may be 
considered as extensions of inequalities \eqref{saker1}, \eqref{saker2}, 
\eqref{saker3} and \eqref{saker4}. As special cases, we obtain some new continuous, 
discrete and quantum inequalities of Hardy-type, generalizing those 
obtained in the literature.

The paper is organized as follows. In Section~\ref{sec:2},
we briefly recall necessary results and notions;
the original results being then given and proved in 
Section~\ref{sec:3}. We end with Section~\ref{sec:4}
of conclusion.


\section{Preliminaries}
\label{sec:2}

We recall the necessary definitions and concepts 
about the time-scale $\alpha$-conformable calculi, 
which are used in the next section. 
For more details we refer the readers to \cite{s5,l3,sdm31,sdm32}. 

Every nonempty arbitrary closed subset of the real numbers 
is called a time scale, being denoted by $\mathbb{T}$. One assumes that 
$\mathbb{T}$ has the standard topology on the real numbers $\mathbb{R}$. 
The forward jump operator $\sigma: \mathbb{T}\rightarrow \mathbb{T}$
is defined by
\begin{equation}  
\label{sigma}
\sigma(t):=\inf\{s\in \mathbb{T} : s>t\}, \qquad t\in\mathbb{T},
\end{equation}
and the backward jump operator $\rho: \mathbb{T}:\rightarrow \mathbb{T}$ by
\begin{equation}  
\label{nu}
\rho(t):=\sup\{s\in \mathbb{T} : s<t\}, \qquad t\in\mathbb{T}.
\end{equation}

In  Definitions~\ref{sigma} and \ref{nu} 
we set $\sup\mathbb{T}=\inf\emptyset$ (i.e., $\sigma(t)=t$ 
if $t$ is the minimum of $\mathbb{T}$) and $\inf\mathbb{T}=\sup\emptyset$ 
(i.e., $\rho(t)=t$ if $t$ is the maximum),
where $\emptyset$ is the empty set.

\begin{definition}[See \cite{sdm31}] 
\label{def1}
Let  $\eta : \mathbb{T} \rightarrow \mathbb{R}$, 
$t \in {\mathbb{T}}^k$, and $\alpha \in (0,1].$ 
For $t >0,$ we define $T^{\Delta}_{\alpha}(\eta)(t)$ 
to be the number (provided it exists) with the property  
that, given any $\epsilon > 0,$ there is a $\delta$-neighborhood 
$U_t \subset \mathbb{T}$ of $t,$  $\delta >0,$ such that
\begin{equation*}
\left|[\eta(\sigma(t))-\eta(s)]t^{1-\alpha}
-T^{\Delta}_{\alpha}(\eta)(t)[\sigma(t)-s]\right|
\leq\varepsilon|\sigma(t)-s|
\end{equation*}
for all $s \in U_t$. We call $T^{\Delta}_{\alpha}(\eta)(t)$ the conformable  
fractional derivative of $\eta$ of order $\alpha$ at $t$,  
and the conformable fractional derivative on $\mathbb{T}$ at $0$ is defined 
as $T^{\Delta}_{\alpha}(\eta)(0)=\lim_{t \longrightarrow 0+} T^{\Delta}_{\alpha}(\eta)(t).$
\end{definition}

\begin{lemma}[See \cite{sdm31}] 
Let $\alpha \in (0,1].$ Suppose $\alpha$-conformable differentiable 
of order $\alpha$ at $\zeta \in {\mathbb{T}}^k,$ and continuous 
function $\xi:\mathbb{T}\rightarrow\mathbb{R}$ and the 
differentiable continuously function $\eta:\mathbb{R}\rightarrow\mathbb{R}$. 
There exists a constant $c\in[\zeta,\sigma(\zeta)]_{\mathbb{R}}$ such that
\begin{equation}
\label{chain1}
T^{\Delta}_{\alpha}(\eta\circ \xi)(\zeta)=\eta'(\xi(c))T^{\Delta}_{\alpha}(\xi)(\zeta).
\end{equation}
\end{lemma}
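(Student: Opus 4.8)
The plan is to deduce this $\alpha$-conformable chain rule from the classical Keller chain rule on time scales, using the elementary link between the conformable $\Delta$-derivative and the ordinary $\Delta$-derivative. The first step is to record that, at any point $t>0$ at which the $\Delta$-derivative exists,
\begin{equation*}
T^{\Delta}_{\alpha}(\eta)(t)=t^{1-\alpha}\,\eta^{\Delta}(t).
\end{equation*}
This is immediate from Definition~\ref{def1}: multiplying the $\Delta$-differentiability estimate for $\eta$ by the positive constant $t^{1-\alpha}$ produces exactly the inequality characterising $T^{\Delta}_{\alpha}(\eta)(t)$, and that inequality determines the derivative uniquely. In particular, for $\zeta>0$ the hypothesis that $\xi$ is $\alpha$-conformable differentiable at $\zeta$ is equivalent to $\xi$ being $\Delta$-differentiable at $\zeta$, and then $T^{\Delta}_{\alpha}(\xi)(\zeta)=\zeta^{1-\alpha}\xi^{\Delta}(\zeta)$.

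Next I would invoke the classical Keller chain rule on time scales (see \cite{l3,s5}): since $\xi$ is continuous and $\Delta$-differentiable at $\zeta$ and $\eta$ is continuously differentiable on $\mathbb{R}$, the composition $\eta\circ\xi:\mathbb{T}\to\mathbb{R}$ is $\Delta$-differentiable at $\zeta$ and there exists $c\in[\zeta,\sigma(\zeta)]_{\mathbb{R}}$ with $(\eta\circ\xi)^{\Delta}(\zeta)=\eta'(\xi(c))\,\xi^{\Delta}(\zeta)$. Applying the identity of the first step to the function $\eta\circ\xi$ and substituting gives
\begin{equation*}
T^{\Delta}_{\alpha}(\eta\circ\xi)(\zeta)=\zeta^{1-\alpha}\,(\eta\circ\xi)^{\Delta}(\zeta)=\eta'(\xi(c))\,\zeta^{1-\alpha}\xi^{\Delta}(\zeta)=\eta'(\xi(c))\,T^{\Delta}_{\alpha}(\xi)(\zeta),
\end{equation*}
which is exactly \eqref{chain1}; the value $\zeta=0$ is then covered by the limiting convention in Definition~\ref{def1}.

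If one prefers a self-contained argument, the same conclusion follows by the usual dichotomy. When $\zeta$ is right-dense, $\sigma(\zeta)=\zeta$, the conformable difference quotient collapses to $\zeta^{1-\alpha}$ times an ordinary limit quotient, the classical chain rule for functions of a real variable applies, and $c=\zeta$ works. When $\zeta$ is right-scattered one has $T^{\Delta}_{\alpha}(\eta\circ\xi)(\zeta)=\zeta^{1-\alpha}\,\dfrac{\eta(\xi(\sigma(\zeta)))-\eta(\xi(\zeta))}{\sigma(\zeta)-\zeta}$, and one applies the mean value theorem to $\eta$ on the real segment joining $\xi(\zeta)$ and $\xi(\sigma(\zeta))$. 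I expect this right-scattered case to be the only delicate point: the mean value theorem yields some intermediate value of $\eta'$, and it must be expressed as $\eta'(\xi(c))$ for a genuine $c\in[\zeta,\sigma(\zeta)]_{\mathbb{R}}$, which is precisely what the Keller (equivalently, P\"{o}tzsche) chain rule packages through a mean-value-for-integrals argument. Everything else reduces to keeping track of the scalar factor $\zeta^{1-\alpha}$.
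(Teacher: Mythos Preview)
The paper does not prove this lemma; it merely quotes it from \cite{sdm31} as part of the preliminaries, so there is no ``paper's own proof'' to compare against. Your reduction to the classical Keller chain rule via the identity $T^{\Delta}_{\alpha}(\eta)(t)=t^{1-\alpha}\eta^{\Delta}(t)$ (which the paper itself records in item~(i) of the preliminaries) is correct and is exactly the argument used in the cited source \cite{sdm31}: the conformable derivative is just a scalar multiple of the $\Delta$-derivative, so every pointwise differentiation rule on time scales transfers mechanically.

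One minor caveat worth flagging in your self-contained sketch: in the right-scattered case you apply the mean value theorem to $\eta$ and then need to write the intermediate value as $\xi(c)$ for some $c\in[\zeta,\sigma(\zeta)]_{\mathbb{R}}$. Since $\xi$ is stated only as a function on $\mathbb{T}$, the value $\xi(c)$ for $c$ in the open gap $(\zeta,\sigma(\zeta))$ is not a priori defined. The standard formulation of Keller's chain rule (Theorem~1.87 in \cite{s5}) assumes $\xi$ is continuous on the surrounding real interval, and the lemma as stated here inherits that implicit convention; you are right that this is the only genuinely delicate point, but it is a hypothesis issue rather than a flaw in your argument.
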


\begin{lemma}[See \cite{sdm31}]
Let $\eta:\mathbb{R}\rightarrow\mathbb{R}$ be  
continuously  differentiable, $\alpha \in (0,1]$ 
and $\xi:\mathbb{T}\rightarrow\mathbb{R}$ 
be a $\alpha$-conformable differentiable function. 
Then $(\eta\circ \xi):\mathbb{T}\rightarrow\mathbb{R}$ 
is $\alpha$-conformable differentiable and we have
\begin{equation}
\label{chain2}
T^{\Delta}_{\alpha} (\eta\circ \xi)(t)
=\bigg\{\int_{0}^{1}\eta'\big(\xi(t)
+h\mu(t)t^{\alpha-1}T^{\Delta}_{\alpha} (\xi(t))\big)dh\bigg\} 
T^{\Delta}_{\alpha}(\xi)(t).
\end{equation}
\end{lemma}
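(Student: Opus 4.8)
The plan is to reduce \eqref{chain2} to the classical Keller (P\"otzsche) chain rule on time scales, exploiting the known link between the $\alpha$-conformable delta derivative and the ordinary delta derivative. Recall from \cite{sdm31} that a function $\phi:\mathbb{T}\rightarrow\mathbb{R}$ is $\alpha$-conformable differentiable at $t\in\mathbb{T}^k$ with $t>0$ if and only if it is delta differentiable there, and in that case $T^{\Delta}_{\alpha}(\phi)(t)=t^{1-\alpha}\phi^{\Delta}(t)$, equivalently $\phi^{\Delta}(t)=t^{\alpha-1}T^{\Delta}_{\alpha}(\phi)(t)$. Since $\xi$ is assumed $\alpha$-conformable differentiable and $t^{\alpha-1}\neq 0$, it is delta differentiable on $\mathbb{T}^k$; because $\eta\in C^1(\mathbb{R})$, the composition $\eta\circ\xi$ is then delta differentiable by the ordinary chain rule, hence $\eta\circ\xi$ is $\alpha$-conformable differentiable, which is the first assertion of the lemma.

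Next I would invoke the classical Keller chain rule: for $\eta\in C^1(\mathbb{R})$ and $\xi$ delta differentiable,
\begin{equation*}
(\eta\circ\xi)^{\Delta}(t)=\left\{\int_{0}^{1}\eta'\big(\xi(t)+h\mu(t)\xi^{\Delta}(t)\big)\,dh\right\}\xi^{\Delta}(t).
\end{equation*}
Then I would multiply both sides by $t^{1-\alpha}$. On the left, $t^{1-\alpha}(\eta\circ\xi)^{\Delta}(t)=T^{\Delta}_{\alpha}(\eta\circ\xi)(t)$. On the right, I would attach the factor $t^{1-\alpha}$ to the trailing $\xi^{\Delta}(t)$, using $t^{1-\alpha}\xi^{\Delta}(t)=T^{\Delta}_{\alpha}(\xi)(t)$, and inside the integrand rewrite $\mu(t)\xi^{\Delta}(t)=\mu(t)t^{\alpha-1}T^{\Delta}_{\alpha}(\xi)(t)$. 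Substituting these identities yields
\begin{equation*}
T^{\Delta}_{\alpha}(\eta\circ\xi)(t)=\left\{\int_{0}^{1}\eta'\big(\xi(t)+h\mu(t)t^{\alpha-1}T^{\Delta}_{\alpha}(\xi)(t)\big)\,dh\right\}T^{\Delta}_{\alpha}(\xi)(t),
\end{equation*}
which is exactly \eqref{chain2}. For the boundary value $t=0$ one passes to the limit $t\rightarrow 0^{+}$, using the convention $T^{\Delta}_{\alpha}(\cdot)(0)=\lim_{t\rightarrow 0^+}T^{\Delta}_{\alpha}(\cdot)(t)$ from Definition~\ref{def1} and continuity of the quantities involved.

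A self-contained alternative mirrors the standard proof of the Keller chain rule: since $\eta\in C^1(\mathbb{R})$, for all real $x,y$ one has $\eta(y)-\eta(x)=\big(\int_0^1\eta'(x+h(y-x))\,dh\big)(y-x)$; taking $x=\xi(s)$, $y=\xi(\sigma(t))$ for $s$ in a $\delta$-neighborhood of $t$, multiplying by $t^{1-\alpha}$, and combining the defining inequality of $T^{\Delta}_{\alpha}(\xi)(t)$ with the convergence $\xi(s)+h(\xi(\sigma(t))-\xi(s))\rightarrow\xi(t)+h\mu(t)t^{\alpha-1}T^{\Delta}_{\alpha}(\xi)(t)$ as $s\rightarrow t$ (uniformly in $h\in[0,1]$, by continuity of $\eta'$ on a compact interval) produces the estimate required by Definition~\ref{def1}. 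The only points demanding care, in either route, are the degenerate case $\mu(t)=0$, where the bracket collapses to $\eta'(\xi(t))$ and the formula must reduce to \eqref{chain1} with $c=\zeta$, and the transfer of $\alpha$-conformable differentiability to $\eta\circ\xi$; there is no deeper obstacle, the substantive content being just the bookkeeping of the factor $t^{1-\alpha}$.
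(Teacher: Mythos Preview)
The paper does not supply its own proof of this lemma; it is quoted verbatim from \cite{sdm31} in the Preliminaries section, so there is nothing in the present paper to compare your argument against. Your reduction to the classical Keller--P\"otzsche chain rule via the identity $T^{\Delta}_{\alpha}(\phi)(t)=t^{1-\alpha}\phi^{\Delta}(t)$ is correct and is the natural route: multiplying the ordinary chain rule through by $t^{1-\alpha}$ and rewriting $\mu(t)\xi^{\Delta}(t)=\mu(t)t^{\alpha-1}T^{\Delta}_{\alpha}(\xi)(t)$ inside the integrand recovers \eqref{chain2} immediately, and the $\alpha$-conformable differentiability of $\eta\circ\xi$ follows from delta differentiability of the composite. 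This is presumably also how the cited source \cite{sdm31} proceeds, since that paper establishes the conformable calculus precisely by transporting the standard time-scale results through the factor $t^{1-\alpha}$.
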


For the continuous functions $\eta$ and $\xi$, 
we have that the product  $\eta \xi:\mathbb{T}
\longrightarrow \mathbb{R}$ is conformable 
fractional differentiable with
\begin{equation} 
\label{product}
T^{\Delta}_{\alpha}(\eta \xi)= T^{\Delta}_{\alpha}(\eta)\xi 
+\eta^{\sigma} T^{\Delta}_{\alpha} (\xi)
=T^{\Delta}_{\alpha}(\eta)\xi^{\sigma} +\eta T^{\Delta}_{\alpha} (\xi).
\end{equation}

The $\alpha$-conformable integration by parts formula on time scales 
is given in the following Lemma.

\begin{lemma}[See \cite{sdm31}]
Suppose that $a,$ $b \in \mathbb{T}$ where $b>a.$ If $\eta,$ $\xi$ 
are conformable $\alpha$-fractional differentiable and $\alpha \in (0,1],$ then
\begin{equation}
\label{parts}
\int_{a}^{b} \eta(t) T_\alpha^{\Delta} \xi(t)\Delta_\alpha t
=\Big[\eta(t)\xi(t)\Big]_a^b-\int_{a}^{b}T_\alpha^{\Delta} 
\eta(t) \xi^\sigma(t)\Delta_\alpha t.
\end{equation}
\end{lemma}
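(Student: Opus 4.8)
The plan is to obtain \eqref{parts} directly from the $\alpha$-conformable product rule \eqref{product} combined with the fundamental theorem of calculus for the $\alpha$-conformable $\Delta$-integral on time scales. First I would take the second form of the product rule in \eqref{product},
\begin{equation*}
T^{\Delta}_{\alpha}(\eta\xi)(t)=T^{\Delta}_{\alpha}(\eta)(t)\,\xi^{\sigma}(t)+\eta(t)\,T^{\Delta}_{\alpha}(\xi)(t),
\end{equation*}
which is available since $\eta$ and $\xi$ are conformable $\alpha$-fractional differentiable (hence so is the product $\eta\xi$), and integrate this identity over $[a,b]_{\mathbb{T}}$ with respect to $\Delta_\alpha t$. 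By linearity of the $\alpha$-conformable integral, the right-hand side splits into exactly the two integrals appearing in \eqref{parts}, so it only remains to evaluate $\int_a^b T^{\Delta}_{\alpha}(\eta\xi)(t)\,\Delta_\alpha t$.

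For that step I would invoke the fundamental theorem of the $\alpha$-conformable calculus: for an $\alpha$-conformable differentiable $F$ one has $\int_a^b T^{\Delta}_{\alpha}(F)(t)\,\Delta_\alpha t = F(b)-F(a)$. This can be seen either intrinsically from Definition~\ref{def1} by telescoping the increments $F(\sigma(t))-F(t)$, or, more quickly, from the identifications $T^{\Delta}_{\alpha}(F)(t)=t^{1-\alpha}F^{\Delta}(t)$ (which is immediate upon comparing Definition~\ref{def1} with the defining property of the ordinary delta derivative) and $\Delta_\alpha t = t^{\alpha-1}\Delta t$ (the definition of the $\alpha$-conformable integral, see \cite{sdm31}); these reduce the integral to $\int_a^b F^{\Delta}(t)\,\Delta t = F(b)-F(a)$ by the usual fundamental theorem of the $\Delta$-calculus on time scales. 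Applying this with $F=\eta\xi$ gives $\int_a^b T^{\Delta}_{\alpha}(\eta\xi)(t)\,\Delta_\alpha t = \big[\eta(t)\xi(t)\big]_a^b$; substituting and solving for $\int_a^b \eta(t)\,T^{\Delta}_{\alpha}(\xi)(t)\,\Delta_\alpha t$ yields \eqref{parts}.

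The steps involving linearity and the final algebraic rearrangement are routine; the one point deserving care --- and the main obstacle --- is the justification of the fundamental theorem step, i.e. that the $\alpha$-conformable integral genuinely inverts the $\alpha$-conformable derivative. This requires the standing assumption $t>0$ (so that $t^{1-\alpha}$ and $t^{\alpha-1}$ are well defined and finite, consistent with the conformable setting living on $(0,\infty)_{\mathbb{T}}$), together with enough regularity of $\eta$ and $\xi$ --- e.g. rd-continuity of the relevant $\Delta$-derivatives --- to guarantee that $\eta\xi$, $\xi^{\sigma}$ and the two products $T^{\Delta}_{\alpha}(\eta)\,\xi^{\sigma}$ and $\eta\,T^{\Delta}_{\alpha}(\xi)$ are $\Delta_\alpha$-integrable on $[a,b]_{\mathbb{T}}$. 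Under the hypotheses of the lemma these conditions are met, which completes the argument.
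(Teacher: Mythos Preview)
Your argument is correct and is exactly the standard route: integrate the second form of the product rule \eqref{product} and apply the fundamental theorem for the $\alpha$-conformable integral (which, as you note, reduces via $T^{\Delta}_{\alpha}(F)(t)=t^{1-\alpha}F^{\Delta}(t)$ and $\Delta_\alpha t=t^{\alpha-1}\Delta t$ to the ordinary $\Delta$-fundamental theorem). Note, however, that the paper does not actually prove this lemma --- it is quoted as a preliminary result from \cite{sdm31} --- so there is no in-paper proof to compare against; your proposal simply supplies the (standard) argument that the cited reference would contain.
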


\begin{lemma}[The $\alpha$-conformable H\"{o}lder inequality, see \cite{dd2}]
Let $a,b\in\mathbb{T}$ with $a < b$. If $\alpha \in (0,1]$ 
and $\eta,\xi: \mathbb{T}\longrightarrow \mathbb{R},$  then
\begin{equation}  
\label{holder}
\int_{a}^{b}|\eta(t)\xi(t)|\Delta_\alpha t
\leq \bigg(\int_{a}^{b}\eta^p(t)\Delta_\alpha t\bigg)^{1/p}
\bigg(\int_{a}^{b} \xi^q(t)\Delta_\alpha t\bigg)^{1/q}
\end{equation}
where $p,q>1$ and $1/p+1/q=1$.
\end{lemma}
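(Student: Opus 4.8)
The plan is to establish \eqref{holder} by mimicking the classical derivation of H\"older's inequality from Young's inequality, carried out with respect to the $\Delta_\alpha$-integral on $\mathbb{T}$. First I would reduce to the case $\eta,\xi\geq 0$: the left-hand side involves only $|\eta\xi|$, and on the right only $\eta^p$ and $\xi^q$ occur, so replacing $\eta$ and $\xi$ by $|\eta|$ and $|\xi|$ changes nothing. Then put
\[
A:=\Big(\int_a^b \eta^p(t)\,\Delta_\alpha t\Big)^{1/p},
\qquad
B:=\Big(\int_a^b \xi^q(t)\,\Delta_\alpha t\Big)^{1/q}.
\]
If $A=0$ or $B=0$, one argues separately: by nonnegativity of the integrand together with the elementary properties of the $\Delta_\alpha$-integral recalled from \cite{sdm31,sdm32}, this forces the corresponding function to vanish $\Delta_\alpha$-almost everywhere, whence $\int_a^b|\eta\xi|\,\Delta_\alpha t=0$ and \eqref{holder} holds trivially. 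So one may assume $A,B>0$.

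The core step is the pointwise Young inequality: for nonnegative reals $u,v$ and conjugate exponents $p,q$ one has $uv\leq u^p/p+v^q/q$. Applying it with $u=\eta(t)/A$ and $v=\xi(t)/B$ gives, for every $t\in[a,b]_{\mathbb{T}}$,
\[
\frac{\eta(t)\xi(t)}{AB}\;\leq\;\frac{1}{p}\,\frac{\eta^p(t)}{A^p}+\frac{1}{q}\,\frac{\xi^q(t)}{B^q}.
\]
I would then integrate this inequality in $t$ over $[a,b]$ with respect to $\Delta_\alpha t$, invoking linearity and monotonicity of the $\Delta_\alpha$-integral (both standard consequences of the construction in \cite{sdm31}), and use the definitions of $A$ and $B$ to evaluate the resulting right-hand side as $\tfrac1p+\tfrac1q=1$. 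This yields $\frac{1}{AB}\int_a^b\eta(t)\xi(t)\,\Delta_\alpha t\leq 1$, which is precisely \eqref{holder} after multiplying through by $AB$ and restoring absolute values.

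An essentially equivalent alternative is to exploit the identity $\int_a^b f(t)\,\Delta_\alpha t=\int_a^b f(t)\,t^{\alpha-1}\,\Delta t$ linking the conformable integral on $\mathbb{T}$ with the ordinary delta integral: split the weight as $t^{\alpha-1}=t^{(\alpha-1)/p}\,t^{(\alpha-1)/q}$, apply the ordinary time-scale H\"older inequality to $\eta(t)\,t^{(\alpha-1)/p}$ and $\xi(t)\,t^{(\alpha-1)/q}$, and collect the weights back into $\Delta_\alpha$-integrals. I expect the only genuine obstacle here to be bookkeeping rather than substance: one must dispatch the degenerate cases $A=0$ or $B=0$ cleanly — this is where the precise ``almost everywhere'' behaviour of the $\Delta_\alpha$-integral is needed — and, should one also want an equality/sharpness discussion, one must track the equality case of Young's inequality, namely $\eta^p(t)/A^p=\xi^q(t)/B^q$ holding $\Delta_\alpha$-almost everywhere.
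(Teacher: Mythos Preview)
Your proposal is correct: the Young-inequality route is the standard derivation of H\"older's inequality and carries over verbatim to the $\Delta_\alpha$-integral, and your alternative via the identity $\int_a^b f(t)\,\Delta_\alpha t=\int_a^b f(t)\,t^{\alpha-1}\,\Delta t$ together with the ordinary time-scale H\"older inequality is equally valid. Note, however, that the paper does not prove this lemma at all --- it is stated in the preliminaries as a known result, with the proof deferred to the cited reference \cite{dd2} --- so there is no in-paper argument to compare against.
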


We need relations between different types of calculus on general
time scales $\mathbb{T}$ and for the particular cases of continuous, 
discrete, and quantum calculi. Such relations are found in \cite{s5,l3,sdm31,sdm32}:
\begin{description}
\item[(i)] for any time scale $\mathbb{T}$, we have
\begin{equation*}
\begin{gathered}
(\eta)^{\Delta_{\alpha}}(t)=(\eta)^{\Delta}(t)t^{1-\alpha},\\
\int_{a}^{b}\eta(t)\Delta_{\alpha}t=\int_{a}^{b}\eta(t)t^{\alpha-1}\Delta t.
\end{gathered}
\end{equation*}

\item[(ii)] If $\mathbb{T}=\mathbb{R}$, then
\begin{equation}
\label{r}
\begin{gathered}
t=\sigma(t),\\
0=\mu(t),\\
\eta^\Delta(t)=\eta'(t),\\
\int_{a}^{b}\eta(t)\Delta t=\int_{a}^{b}\eta(t)dt.
\end{gathered}
\end{equation}

\item[(iii)] If $\mathbb{T}=\mathbb{Z}$, then
\begin{equation}
\label{z}
\begin{gathered}
\sigma(t)=t+1,\\
\mu(t)=1,\\
\eta^\Delta(t)=\Delta \eta(t),\\
\int_{a}^{b}\eta(t)\Delta t=\sum_{t=a}^{b-1}\eta(t).
\end{gathered}
\end{equation}

\item[(iv)] If $\mathbb{T}=h \mathbb{Z}$, then
\begin{equation} 
\label{hz}
\begin{gathered}
\sigma(t)=t+h,\\
\mu(t)=h,\\
\eta^\Delta(t)=\frac{\eta(t+h)-\eta(t)}{h},\\
\int_{a}^{b}\eta(t)\Delta t=\sum_{t=\frac{a}{h}}^{\frac{b}{h}-1}h \eta(ht).
\end{gathered}
\end{equation}

\item[(v)] If $\mathbb{T}=\overline{q^\mathbb{Z}}$, then
\begin{equation}\label{q^z}
\begin{gathered}
\sigma(t)=qt,\\
\mu(t)=(q-1)t,\\
\eta^\Delta(t)=\frac{\eta(qt)-\eta(t)}{(q-1)t},\\
\int_{a}^{b}\eta(t)\Delta t=(q-1)\sum_{t=\log_{q}{a}}^{\log_{q}{b}-1}q^t \eta(q^t).
\end{gathered}
\end{equation}
\end{description}


\section{Main Results}
\label{sec:3}

First, we enlist assumptions for the proofs of our main results.

\begin{description}
\item[($S_1$)] $\mathbb{T}$ is a time scale, 
$p\geq 1$, $a\in[0,\infty)$, and $\alpha\in(0,1]$.

\item[($S_2$)] $f$, $g$, $k$, $r$, $w$ and $v\geq0$ are  
rd-continuous functions on $[a,\infty)$ with $k$ monotonous.

\item[($S_3$)] $\theta$ and $\beta$ are nonnegative constants.

\item[($S_4$)] $\gamma>\theta+1$.

\item[($S_5$)] $0\leq\gamma<\alpha$.

\item[($S_6$)] $G(\varsigma)=\int_{a}^{\varsigma}g(\varsigma)\Delta_{\alpha}\varsigma$, 
$G(\infty)=\infty$, $t\in[a,\infty)$.

\item[($S_7$)] $H(\varsigma)
=\int_{\varsigma}^{\infty}g(\varsigma)\Delta_{\alpha}\varsigma$,  
$t\in[a,\infty)$.

\item[($S_8$)] $K(\varsigma)
=\int_{a}^{\varsigma}r(\varsigma)f(\varsigma)\Delta_{\alpha}\varsigma$, 
$t\in[a,\infty)$.

\item[($S_9$)] $F(\varsigma)=\int_{\varsigma}^{\infty}r(\varsigma)
f(\varsigma)\Delta_{\alpha}\varsigma$, $t\in[a,\infty)$.

\item[($S_{10}$)] $\displaystyle\frac{T_\alpha^{\Delta} w(\varsigma)}{w(\varsigma)}
\leq \theta \Big(\frac{T_\alpha^{\Delta} G(\varsigma)}{G^\sigma(\varsigma)}\Big)$.

\item[($S_{11}$)] $\displaystyle\frac{T_\alpha^{\Delta} w(\varsigma)}{w(\varsigma)}
\leq \theta \Big(\frac{T_\alpha^{\Delta} H(\varsigma)}{H^\sigma(\varsigma)}\Big)$.

\item[($S_{12}$)] $\displaystyle\frac{T_\alpha^{\Delta} w(\varsigma)}{w^{\sigma}(\varsigma)}
\geq \theta \Big(\frac{T_\alpha^{\Delta} G(\varsigma)}{G(\varsigma)}\Big)$.

\item[($S_{13}$)] $\displaystyle\frac{T_\alpha^{\Delta} w(\varsigma)}{w^{\sigma}(\varsigma)}
\geq \theta \Big(\frac{T_\alpha^{\Delta} H(\varsigma)}{H(\varsigma)}\Big)$.

\item[($S_{14}$)] $\displaystyle\frac{T_\alpha^{\Delta} v(\varsigma)}{v^\sigma(\varsigma)}
\leq \beta \Big(\frac{ T_\alpha^{\Delta} K(\varsigma)}{K(\varsigma)}\Big)$.

\item[($S_{15}$)] $\displaystyle\frac{T_\alpha^{\Delta} v(\varsigma)}{v(\varsigma)}
\geq \beta \Big(\frac{ T_\alpha^{\Delta} F(\varsigma)}{F^{\sigma}(\varsigma)}\Big)$.

\item[($S_{16}$)] $k(t)=v(t)=w(t)=1$.

\item[($S_{17}$)] $r(t)=g(t)$.

\item[($S_{18}$)] $r(t)=g(t)=1$.

\item[($S_{19}$)] $\theta=\beta=0$.

\item[($S_{20}$)] $a=0$.

\item[($S_{21}$)] $a=1$.

\item[($S_{22}$)] $\gamma=p$.
\end{description}

Now, we are ready to state and prove our original results, 
which extend several results in the literature.

\begin{theorem}
\label{thm1}
Let $S_1$, $S_2$, $S_3$, $S_4$, $S_6$, $S_8$, $S_{10}$, $S_{14}$ be satisfied. Then,
\begin{multline}  
\label{eq1}
\int_{a}^{\infty}k^\sigma(t)v^\sigma(t)w(t)g(t)\big(G^\sigma(t)\big)^{\alpha
-\gamma-1}\big(K^\sigma(t)\big)^{p-\alpha+1}\Delta_{\alpha} t\\
\leq \bigg(\frac{p+\beta-\alpha+1}{\gamma-\theta-\alpha}\bigg)^p
\int_{a}^{\infty}\frac{k^\sigma(t)v^\sigma(t)w(t)r^p(t)f^p(t)\big(G^\sigma(t)\big)^{(1
-\alpha+\gamma)(p-1)} \big(K^\sigma\big(t)\big)^{1-\alpha} }{g^{p-1}(t)
G^{p(\gamma-\alpha)}(t)}\Delta_{\alpha} t.
\end{multline}
\end{theorem}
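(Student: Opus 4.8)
The plan is to follow the classical Hardy scheme adapted to the $\alpha$-conformable calculus on $\mathbb{T}$: a chain-rule estimate, the splitting-off of the $w$-derivative, an integration by parts, absorption of a self-referential term, and a final H\"older step. Write $L$ for the left-hand side of \eqref{eq1}. Since $\alpha\in(0,1]$, $\theta\geq0$ and $\gamma>\theta+1$, we have $\gamma-\theta-\alpha>0$ (in particular $\gamma>\alpha$, so $\alpha-\gamma<0$), and by $S_6$ the function $G$ is nondecreasing with $T_\alpha^{\Delta}G=g\geq0$ (and likewise $T_\alpha^{\Delta}K=rf$ by $S_8$). First I would apply Keller's chain rule \eqref{chain1} to $t\mapsto\big(G(t)\big)^{\alpha-\gamma}$: for some $c\in[t,\sigma(t)]$ one has $T_\alpha^{\Delta}\big(G^{\alpha-\gamma}\big)(t)=(\alpha-\gamma)\big(G(c)\big)^{\alpha-\gamma-1}g(t)$, and since $G(t)\leq G(c)\leq G^{\sigma}(t)$ with exponent $\alpha-\gamma-1<0$, dividing by the negative number $\alpha-\gamma$ gives
\begin{equation*}
0\leq g(t)\big(G^{\sigma}(t)\big)^{\alpha-\gamma-1}\leq\frac{-1}{\gamma-\alpha}\,T_\alpha^{\Delta}\big(G^{\alpha-\gamma}\big)(t).
\end{equation*}
Multiplying by the nonnegative quantity $\big(k\,v\,K^{p-\alpha+1}\big)^{\sigma}(t)\,w(t)=k^{\sigma}(t)v^{\sigma}(t)w(t)\big(K^{\sigma}(t)\big)^{p-\alpha+1}$ and integrating over $[a,\infty)$ yields
\begin{equation*}
L\leq\frac{-1}{\gamma-\alpha}\int_{a}^{\infty}\big(k\,v\,K^{p-\alpha+1}\big)^{\sigma}(t)\,w(t)\,T_\alpha^{\Delta}\big(G^{\alpha-\gamma}\big)(t)\,\Delta_\alpha t.
\end{equation*}

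Next I would split off the $w$-derivative. By the product rule \eqref{product}, $w\,T_\alpha^{\Delta}\big(G^{\alpha-\gamma}\big)=T_\alpha^{\Delta}\big(wG^{\alpha-\gamma}\big)-T_\alpha^{\Delta}w\cdot\big(G^{\sigma}\big)^{\alpha-\gamma}$, and $S_{10}$ gives $T_\alpha^{\Delta}w\leq\theta\,w\,g/G^{\sigma}$, hence $-T_\alpha^{\Delta}w\cdot\big(G^{\sigma}\big)^{\alpha-\gamma}\geq-\theta\,w\,g\big(G^{\sigma}\big)^{\alpha-\gamma-1}$. Substituting back (and keeping in mind that $-1/(\gamma-\alpha)<0$ reverses the inequality), the term produced by $-\theta\,w\,g\big(G^{\sigma}\big)^{\alpha-\gamma-1}$ is \emph{exactly} $\frac{\theta}{\gamma-\alpha}L$; moving it to the left gives
\begin{equation*}
L\leq\frac{-1}{\gamma-\theta-\alpha}\int_{a}^{\infty}\big(k\,v\,K^{p-\alpha+1}\big)^{\sigma}(t)\,T_\alpha^{\Delta}\big(wG^{\alpha-\gamma}\big)(t)\,\Delta_\alpha t.
\end{equation*}
Now the integrand has the shape $\eta^{\sigma}\,T_\alpha^{\Delta}\xi$ with $\eta=k\,v\,K^{p-\alpha+1}$ and $\xi=wG^{\alpha-\gamma}$, so integrating the first identity in \eqref{product} supplies the integration-by-parts formula $\int_{a}^{\infty}\eta^{\sigma}T_\alpha^{\Delta}\xi\,\Delta_\alpha t=\big[\eta\xi\big]_{a}^{\infty}-\int_{a}^{\infty}T_\alpha^{\Delta}\eta\cdot\xi\,\Delta_\alpha t$; crucially, here $T_\alpha^{\Delta}$ acts on the \emph{unshifted} product $k\,v\,K^{p-\alpha+1}$. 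The boundary term $\big[k\,v\,K^{p-\alpha+1}w\,G^{\alpha-\gamma}\big]_{a}^{\infty}$ is argued to vanish (at $a$ from $G(a)=K(a)=0$, at $\infty$ from $G^{\alpha-\gamma}\to0$), so
\begin{equation*}
L\leq\frac{1}{\gamma-\theta-\alpha}\int_{a}^{\infty}T_\alpha^{\Delta}\big(k\,v\,K^{p-\alpha+1}\big)(t)\,w(t)\,\big(G(t)\big)^{\alpha-\gamma}\,\Delta_\alpha t.
\end{equation*}

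It then remains to expand $T_\alpha^{\Delta}\big(k\,v\,K^{p-\alpha+1}\big)$ by repeated use of \eqref{product} into three summands carrying $T_\alpha^{\Delta}k$, $T_\alpha^{\Delta}v$ and $T_\alpha^{\Delta}\big(K^{p-\alpha+1}\big)$ (with appropriate forward shifts on the surviving factors): the first is nonpositive by the monotonicity of $k$ ($S_2$) and is discarded; the second is controlled by $S_{14}$ ($T_\alpha^{\Delta}v\leq\beta\,v^{\sigma}rf/K$); and the third by Keller's chain rule \eqref{chain1} applied to $K^{p-\alpha+1}$, using $K\leq K^{\sigma}$ and $p-\alpha\geq0$ to replace the intermediate value, and $K^{p-\alpha}$ by $\big(K^{\sigma}\big)^{p-\alpha}$. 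Collecting, $T_\alpha^{\Delta}\big(k\,v\,K^{p-\alpha+1}\big)\leq(p+\beta-\alpha+1)\,k^{\sigma}v^{\sigma}\,rf\,\big(K^{\sigma}\big)^{p-\alpha}$, whence
\begin{equation*}
L\leq\frac{p+\beta-\alpha+1}{\gamma-\theta-\alpha}\int_{a}^{\infty}k^{\sigma}(t)v^{\sigma}(t)w(t)\,r(t)f(t)\,\big(G(t)\big)^{\alpha-\gamma}\big(K^{\sigma}(t)\big)^{p-\alpha}\,\Delta_\alpha t.
\end{equation*}
For $p=1$ this is already \eqref{eq1}. For $p>1$ I would finish with the $\alpha$-conformable H\"older inequality \eqref{holder} with exponents $p$ and $q=p/(p-1)$, writing the last integrand as a product so that the $q$-th power of the first factor is precisely $k^{\sigma}v^{\sigma}w\,g\,\big(G^{\sigma}\big)^{\alpha-\gamma-1}\big(K^{\sigma}\big)^{p-\alpha+1}$ (the integrand of $L$) and the $p$-th power of the second factor is the integrand on the right of \eqref{eq1}; a direct check of the exponents of $g$, $G^{\sigma}$, $G$, $K^{\sigma}$ confirms that this split is consistent (the $g$-exponent collapses to $1-p$, the $G^{\sigma}$-exponent to $(1-\alpha+\gamma)(p-1)$, the $G$-exponent to $-p(\gamma-\alpha)$, and the $K^{\sigma}$-exponent to $1-\alpha$). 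This gives $L\leq\frac{p+\beta-\alpha+1}{\gamma-\theta-\alpha}\,L^{1/q}R^{1/p}$, where $R$ denotes the right-hand side of \eqref{eq1}; assuming $0<L<\infty$, dividing by $L^{1/q}$ and raising to the $p$-th power yields \eqref{eq1}.

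The routine parts are the chain-rule estimate of the first paragraph and the H\"older step of the last. The point I expect to need the most care is the middle: one must choose the two flavours of integration by parts so that $T_\alpha^{\Delta}$ only ever acts on the unshifted functions $k$, $v$, $K^{p-\alpha+1}$, $w$, $G^{\alpha-\gamma}$ (never on a forward shift), verify that the $w$-term reproduces $L$ \emph{exactly} so that the absorption into the left-hand side is legitimate and produces the denominator $\gamma-\theta-\alpha$, and keep every inequality pointing the right way — nontrivial because $\alpha-\gamma<0$ inverts the monotonicity of powers of $G$ and makes $T_\alpha^{\Delta}\big(G^{\alpha-\gamma}\big)\leq0$. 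A secondary technical point is the vanishing of the boundary term, which rests on the behaviour of $G^{\alpha-\gamma}\big(K^{\sigma}\big)^{p-\alpha+1}$ as $t\to a$.
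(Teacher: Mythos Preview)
Your proposal is correct and uses the same ingredients as the paper --- Keller's chain rule \eqref{chain1}, the product rule \eqref{product}, hypotheses $S_{10}$ and $S_{14}$, an integration by parts, and the final H\"older step \eqref{holder} with exponents $p$ and $p/(p-1)$ --- but the order of operations differs in one respect worth noting. The paper performs the integration by parts \emph{first}, choosing the antiderivative
\[
u(t)=-\int_{t}^{\infty}w(s)g(s)\big(G^\sigma(s)\big)^{\alpha-\gamma-1}\Delta_{\alpha}s,
\]
and only afterwards estimates $-u(t)\leq\frac{1}{\gamma-\theta-\alpha}\,w(t)G^{\alpha-\gamma}(t)$ via the chain/product rule and $S_{10}$ applied to $T_\alpha^{\Delta}\big(wG^{\alpha-\gamma}\big)$ (which packages your ``splitting off the $w$-derivative'' and ``absorption'' into a single pointwise bound on the tail integral). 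Your route --- pointwise chain-rule estimate, then product-rule split and absorption of the self-referential $\frac{\theta}{\gamma-\alpha}L$ term, then integration by parts --- reaches the same intermediate inequality, but incurs two small costs you already identify: the absorption step needs $L<\infty$ a priori (not only at the final H\"older stage), and, more substantively, your boundary term at $t=a$ is the indeterminate product $K^{p-\alpha+1}(a)\,G^{\alpha-\gamma}(a)=0\cdot\infty$, which requires an additional limiting argument. The paper's ordering sidesteps this because $u(\infty)=0$ and $K(a)=0$ make both boundary contributions vanish cleanly. Your bound on $T_\alpha^{\Delta}\big(kvK^{p-\alpha+1}\big)$ and the H\"older factorisation match the paper's exactly.
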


\begin{proof}
Applying \eqref{parts} with
\begin{equation*}
T^{\Delta}_{\alpha}u(t)=w(t)g(t)\big(G^\sigma(t)\big)^{\alpha-\gamma-1} 
\qquad \text{and} \qquad 
z^\sigma(t)=k^\sigma(t)v^\sigma(t)\big(K^\sigma(t)\big)^{p-\alpha+1},
\end{equation*}
we have
\begin{multline}
\label{1}
\int_{a}^{\infty}k^\sigma(t)v^\sigma(t)w(t)g(t)\big(G^\sigma(t)\big)^{\alpha
-\gamma-1}\big(K^\sigma(t)\big)^{p-\alpha+1}\Delta_{\alpha} t\\
=\Big[u(t)k(t)v(t)K^{p-\alpha+1}(t)\Big]_a^\infty
+\int_{a}^{\infty}\big(-u(t)\big)T^{\Delta}_{\alpha}  
\Big(k(t)v(t)K^{p-\alpha+1}(t)\Big) \Delta_{\alpha} t,
\end{multline}
where
\begin{equation*}
u(t)=-\int_{t}^{\infty}w(s)g(s)\big(
G^\sigma(s)\big)^{\alpha-\gamma-1}\Delta_{\alpha} s.
\end{equation*}
Using \eqref{chain1}, \eqref{product}, and  $S_{10}$, we have
\begin{eqnarray*}
T^{\Delta}_{\alpha} \Big(w(s)G^{\alpha-\gamma}(s)\Big)
&=&T^{\Delta}_{\alpha} w(s)\big(G^\sigma(s)\big)^{\alpha-\gamma}
+w(s)T^{\Delta}_{\alpha} \big(G^{\alpha-\gamma}(s)\big)\\
&\leq&\theta w(s)T^{\Delta}_{\alpha} G(s)\big(G^\sigma(s)\big)^{\alpha-\gamma-1}
+(\alpha-\gamma)w(s)G^{\alpha-\gamma-1}(c)T^{\Delta}_{\alpha} G(s).
\end{eqnarray*}
Since $T^{\Delta}_{\alpha} G(s)=g(s)\geq0$, $c\leq\sigma(s)$ and $\gamma>1$, we get
\begin{eqnarray*}
T^{\Delta}_{\alpha} \Big(w(s)G^{\alpha-\gamma}(s)\Big)
&\leq&\theta w(s)g(s)\big(G^\sigma(s)\big)^{\alpha-\gamma-1}
+(\alpha-\gamma)w(s)g(s)\big(G^\sigma(s)\big)^{\alpha-\gamma-1}\\
&=&(\alpha-\gamma+\theta)w(s)g(s)\big(G^\sigma(s)\big)^{\alpha-\gamma-1}
\end{eqnarray*}
for $c\in[s,\sigma(s)]$. This gives us that
\begin{equation*}
w(s)g(s)\big(G^\sigma(s)\big)^{\alpha-\gamma-1}
\leq\frac{1}{\alpha-\gamma+\theta} T^{\Delta}_{\alpha}\Big(w(s)G^{\alpha-\gamma}(s)\Big).
\end{equation*}
Hence,
\begin{equation}
\label{2}
\begin{split}
-u(t)=\int_{t}^{\infty}w(s)g(s)\big(G^\sigma(s)\big)^{\alpha-\gamma-1}\Delta_{\alpha} s
&\leq\frac{1}{\alpha-\gamma+\theta}\int_{t}^{\infty} 
T^{\Delta}_{\alpha} \Big(w(s)G^{\alpha-\gamma}(s)\Big)\Delta_{\alpha} s\\
&=\frac{1}{\gamma-\theta-\alpha}w(t)G^{\alpha-\gamma}(t).
\end{split}
\end{equation}
Using  \eqref{product} and  \eqref{chain1}, we have
\begin{eqnarray*}
T^{\Delta}_{\alpha} \Big(k(t)v(t)K^{p-\alpha+1}(t)\Big)
&=& T^{\Delta}_{\alpha} \big(k(t)v(t)\big) K^{p-\alpha+1}(t)+k^\sigma(t)v^\sigma(t)
T^{\Delta}_{\alpha}\big(K^{p-\alpha+1}(t)\big)\\
&=& T^{\Delta}_{\alpha} k(t)v(t)K^{p-\alpha+1}(t)
+k^\sigma(t) T^{\Delta}_{\alpha} v(t)K^{p-\alpha+1}(t)\\
&& \quad \quad+(p-\alpha+1)k^\sigma(t)v^\sigma(t)
K^{p-\alpha}(c) T^{\Delta}_{\alpha} K(t)
\end{eqnarray*}
for $c\in[t,\sigma(t)]$. Since $\sigma(t)\geq c$, $1\leq p$, 
$0\leq T^{\Delta}_{\alpha} k(t)$, $r(t)f(t)=T^{\Delta}_{\alpha} K(t)\geq0$,  
and $S_{14}$, we have
\begin{equation}
\label{4}
\begin{split}
T^{\Delta}_{\alpha} \Big(k(t)v(t)K^p(t)\Big)
&\leq\beta k^\sigma(t)v^\sigma(t)r(t)f(t)K^{p-\alpha}(t)+(p-\alpha+1)
k^\sigma(t)v^\sigma(t)r(t)f(t)\big(K^\sigma(t)\big)^{p-\alpha}\\
&\leq(p+\beta-\alpha+1)k^\sigma(t)v^\sigma(t)r(t)f(t)\big(K^\sigma\big(t))^{p-\alpha}.
\end{split}
\end{equation}
Combining \eqref{1}, \eqref{2} and \eqref{4}, we get ($K(a)=0$ and $u(\infty)=0$) that
\begin{eqnarray*}
&&\int_{a}^{\infty}k^\sigma(t)v^\sigma(t)w(t)g(t)\big(
G^\sigma(t)\big)^{\alpha-\gamma-1}\big(K^\sigma(t)\big)^{p-\alpha+1} \Delta_{\alpha} t\\
&&\leq\frac{p+\beta-\alpha+1}{\gamma-\theta-\alpha}
\int_{a}^{\infty}k^\sigma(t)v^\sigma(t)w(t)r(t)f(t)
G^{\alpha-\gamma}(t)\big(K^\sigma\big(t))^{p-\alpha}\Delta_{\alpha} t
\end{eqnarray*}
or, equivalently,
\begin{eqnarray*}
&&\int_{a}^{\infty}k^\sigma(t)v^\sigma(t)w(t)g(t)\big(
G^\sigma(t)\big)^{\alpha-\gamma-1}\big(K^\sigma(t)\big)^{p-\alpha+1}\Delta_\alpha t\\
&&\leq\frac{p+\beta-\alpha+1}{\gamma-\theta-\alpha}
\int_{a}^{\infty}\bigg(\big(k^\sigma(t)v^\sigma(t)
w(t)g(t)\big)^{(p-1)/p}\big(G^\sigma(t)\big)^{(\alpha-\gamma-1)(p
-1)/p}\big(K^\sigma\big(t))^{(p-1)(p-\alpha+1)/p}\bigg)\\
&&\quad\times\bigg(\frac{\big(k^\sigma(t)v^\sigma(t)w(t)\big)^{1/p}
r(t)f(t)\big(G^\sigma(t)\big)^{(1-\alpha+\gamma)
(p-1)/p} \big(K^\sigma\big(t))^{(1-\alpha)/p} }{g^{(p-1)/p}(t)
G^{\gamma-\alpha}(t)}\bigg)\Delta_\alpha t.
\end{eqnarray*}
Using \eqref{holder} with $p$ and $p/(p-1)$ indexes, we have
\begin{eqnarray*}
&&\int_{a}^{\infty}k^\sigma(t)v^\sigma(t)w(t)g(t)\big(G^\sigma(t)
\big)^{(\alpha-\gamma-1)}\big(K^\sigma(t)\big)^{(p-\alpha+1)}\Delta_{\alpha} t\\
&&\leq\frac{p+\beta-\alpha+1}{\gamma-\theta-\alpha}
\bigg(\int_{a}^{\infty}k^\sigma(t)v^\sigma(t)w(t)g(t)\big(G^\sigma(t)
\big)^{(\alpha-\gamma-1)}\big(K^\sigma\big(t)\big)^{(p-\alpha+1)}\Delta_{\alpha} t\bigg)^{(p-1)/p}\\
&&\quad\times\bigg(\int_{a}^{\infty}\frac{k^\sigma(t)
v^\sigma(t)w(t)r^p(t)f^p(t)\big(G^\sigma(t)\big)^{(1+\gamma-\alpha)(p-1)} 
\big(K^\sigma\big(t)\big)^{1-\alpha} }{g^{p-1}(t)G^{p(\gamma-\alpha)}(t)}\Delta_{\alpha} t\bigg)^{1/p},
\end{eqnarray*}
which implies that
\begin{eqnarray*}
&&\int_{a}^{\infty}k^\sigma(t)v^\sigma(t)w(t)g(t)
\big(G^\sigma(t)\big)^{\alpha-\gamma-1}\big(K^\sigma(t)\big)^{p-\alpha+1}\Delta_{\alpha} t\\
&&\leq\bigg(\frac{p+\beta-\alpha+1}{\gamma-\theta-\alpha}\bigg)^p
\int_{a}^{\infty}\frac{k^\sigma(t)v^\sigma(t)w(t)r^p(t)f^p(t)
\big(G^\sigma(t)\big)^{(1-\alpha+\gamma)(p-1)} 
\big(K^\sigma\big(t)\big)^{1-\alpha} }{g^{p-1}(t)G^{p(\gamma-\alpha)}(t)}\Delta_{\alpha} t.
\end{eqnarray*}
The proof is complete.
\end{proof}	

\begin{remark}
If we take $\alpha=1$ in Theorem~\ref{thm1}, then we recapture Theorem~\ref{thmm1}.
\end{remark}

\begin{corollary} 
\label{1.1}
Theorem~\ref{thm1} with $S_{16}$, $S_{117}$ and $S_{18}$ give us that
\begin{multline} 
\label{1.2}
\int_{a}^{\infty}g(t)\big(G^\sigma(t)\big)^{\alpha
-\gamma-1}\big(K^\sigma(t)\big)^{p-\alpha+1}\Delta_{\alpha} t\\
\leq\bigg(\frac{p-\alpha+1}{\gamma-\alpha}\bigg)^p
\int_{a}^{\infty}\frac{g(t)f^p(t)\big(G^\sigma(t)\big)^{(1-\alpha+\gamma)(p-1)} 
\big(K^\sigma\big(t)\big)^{1-\alpha} }{G^{p(\gamma-\alpha)}(t)}\Delta_{\alpha} t.
\end{multline}
\end{corollary}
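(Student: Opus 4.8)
The plan is to derive \eqref{1.2} directly from Theorem~\ref{thm1} by specialization, with no new analytic input. First I would start from inequality \eqref{eq1} and impose the normalization $S_{16}$, setting $k(t)\equiv v(t)\equiv w(t)\equiv 1$. Because the $\alpha$-conformable derivative of a constant vanishes, $T_\alpha^\Delta w = T_\alpha^\Delta v = 0$, so the structural hypotheses $S_{10}$ and $S_{14}$ hold automatically with the choice $\theta=\beta=0$ (this is $S_{19}$). Consequently $S_4$ shrinks to $\gamma>1$, which combined with $\alpha\in(0,1]$ yields $\gamma-\alpha>0$, so that the constant on the right of \eqref{1.2} is well defined; the remaining hypotheses $S_1$, $S_2$, $S_6$, $S_8$ are inherited verbatim.

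Next I would substitute $S_{17}$, i.e. $r(t)=g(t)$, into both sides of \eqref{eq1}. On the left-hand side the product $k^\sigma(t)v^\sigma(t)w(t)$ collapses to $1$ and what remains is exactly $\int_a^\infty g(t)\big(G^\sigma(t)\big)^{\alpha-\gamma-1}\big(K^\sigma(t)\big)^{p-\alpha+1}\Delta_\alpha t$, the left side of \eqref{1.2}. On the right-hand side, after the same collapse and the substitution $r=g$, the numerator acquires a factor $g^p(t)$, which cancels against $g^{p-1}(t)$ in the denominator to leave a single power $g(t)$; the exponents $(1-\alpha+\gamma)(p-1)$ on $G^\sigma$, $1-\alpha$ on $K^\sigma$, and $p(\gamma-\alpha)$ in the denominator are untouched, so the integrand becomes precisely $g(t)f^p(t)\big(G^\sigma(t)\big)^{(1-\alpha+\gamma)(p-1)}\big(K^\sigma(t)\big)^{1-\alpha}/G^{p(\gamma-\alpha)}(t)$.

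Finally, with $\theta=\beta=0$ the constant $\big(\tfrac{p+\beta-\alpha+1}{\gamma-\theta-\alpha}\big)^p$ reduces to $\big(\tfrac{p-\alpha+1}{\gamma-\alpha}\big)^p$, which matches \eqref{1.2}, and the corollary is proved. Since the integration-by-parts, chain-rule, and H\"older steps were already carried out inside the proof of Theorem~\ref{thm1}, there is no genuine obstacle here: the only points that require attention are the exponent bookkeeping in the cancellation $g^p/g^{p-1}=g$ and the verification that, under the reduced hypotheses ($\gamma>1$, $\alpha\in(0,1]$), every power and every integral appearing in \eqref{eq1} remains meaningful.
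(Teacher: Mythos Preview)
Your proposal is correct and is exactly the intended derivation: the paper states the corollary without a separate proof, as it follows by direct specialization of \eqref{eq1}. You have also correctly untangled the evident misprints in the assumption labels---the stated ``$S_{117}$'' and ``$S_{18}$'' should be $S_{17}$ and $S_{19}$, precisely the choices you use, since the resulting inequality \eqref{1.2} still contains a general $g$ and requires $\theta=\beta=0$ for the constant to match.
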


\begin{remark}
If we set $\alpha=1$ in Corollary~\ref{1.1}, 
then \eqref{1.2} gives \eqref{saker1}.
\end{remark}

Now, as special cases of our results, we will obtain continuous, 
discrete and quantum $\alpha$-conformable inequalities. This is obtained
by choosing, respectively, the time scales $\mathbb{T}=\mathbb{R}$, 
$\mathbb{T}=h\mathbb{Z}$ and $\mathbb{T}=\mathbb{Z}$, 
and $\mathbb{T}=\overline{q^\mathbb{Z}}$.

\begin{corollary}
\label{cor1}
Putting $\mathbb{T}=\mathbb{R}$ in Theorem~\ref{thm1}, one obtains
from \eqref{r} and \eqref{eq1} that
\begin{multline}
\label{eqcor1}
\int_{a}^{\infty}k(t)v(t)w(t)g(t)G^{\alpha-\gamma-1}(t)K^{p-\alpha+1}(t) t^{\alpha-1} dt\\
\leq\bigg(\frac{p+\beta-\alpha+1}{\gamma-\theta-\alpha}\bigg)^p
\int_{a}^{\infty}\frac{k(t)v(t)w(t)r^p(t)f^p(t)
G^{p-\gamma+\alpha-1}(t) K^{1-\alpha}(t)}{g^{p-1}(t)} t^{\alpha-1} dt,
\end{multline}
where
\begin{equation*}
G(t)=\int_{a}^{t}g(s)s^{\alpha-1} ds 
\quad and \quad  K(t)=\int_{a}^{t}r(s)f(s) s^{\alpha-1} ds.
\end{equation*}
\end{corollary}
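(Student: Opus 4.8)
The plan is to obtain Corollary~\ref{cor1} as the direct specialization of Theorem~\ref{thm1} to the time scale $\mathbb{T}=\mathbb{R}$. First I would record from \eqref{r} that on $\mathbb{T}=\mathbb{R}$ one has $\sigma(t)=t$ (and $\mu(t)=0$), so that $G^\sigma(t)=G(t)$, $K^\sigma(t)=K(t)$, $k^\sigma(t)=k(t)$ and $v^\sigma(t)=v(t)$ throughout inequality \eqref{eq1}. Making these substitutions on both sides collapses every ``$\sigma$''-superscript; in particular, on the right-hand side the two powers of $G$ combine as $(1-\alpha+\gamma)(p-1)-p(\gamma-\alpha)=p-\gamma+\alpha-1$, which is exactly the exponent of $G(t)$ appearing in \eqref{eqcor1}, while the exponents $\alpha-\gamma-1$ and $p-\alpha+1$ on the left-hand side and $1-\alpha$ on the right-hand side are unchanged.

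Next I would convert the $\alpha$-conformable integral into an ordinary Riemann integral. By the first displayed relation of item~(i) in Section~\ref{sec:2}, valid on any time scale, $\int_a^\infty \eta(t)\,\Delta_\alpha t=\int_a^\infty \eta(t)\,t^{\alpha-1}\,\Delta t$, and by \eqref{r} the latter $\Delta$-integral over $\mathbb{T}=\mathbb{R}$ equals $\int_a^\infty \eta(t)\,t^{\alpha-1}\,dt$. Applying this to each of the two integrals in \eqref{eq1} inserts the factor $t^{\alpha-1}$ into the respective integrands, which is precisely the form \eqref{eqcor1}. The same relation, applied to the defining integrals in $S_6$ and $S_8$, rewrites $G$ and $K$ as $G(t)=\int_a^t g(s)\,s^{\alpha-1}\,ds$ and $K(t)=\int_a^t r(s)f(s)\,s^{\alpha-1}\,ds$, matching the corollary's statement.

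Finally, I would check that the hypotheses are consistent: Theorem~\ref{thm1} needs only $S_1$, $S_2$, $S_3$, $S_4$, $S_6$, $S_8$, $S_{10}$, $S_{14}$, and each of these makes sense for $\mathbb{T}=\mathbb{R}$; using $\sigma(t)=t$ together with the identity $(\cdot)^{\Delta_\alpha}(t)=(\cdot)^\Delta(t)\,t^{1-\alpha}$ from item~(i), conditions $S_{10}$ and $S_{14}$ reduce to the classical pointwise inequalities $w'(t)/w(t)\le\theta\,G'(t)/G(t)$ and $v'(t)/v(t)\le\beta\,K'(t)/K(t)$. Hence \eqref{eq1} holds, and the substitutions above turn it into \eqref{eqcor1}. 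There is no genuine analytic obstacle here — the corollary is a pure specialization — so the only point demanding attention is the bookkeeping: tracking the weight $t^{\alpha-1}$ produced by the $\Delta_\alpha\mapsto dt$ conversion and correctly combining the powers of $G$ after the $\sigma$-collapse.
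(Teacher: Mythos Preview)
Your proposal is correct and matches the paper's approach exactly: the corollary is stated without a separate proof, as a direct specialization of Theorem~\ref{thm1} to $\mathbb{T}=\mathbb{R}$ via \eqref{r}, and your write-up simply makes explicit the $\sigma$-collapse, the $\Delta_\alpha t\mapsto t^{\alpha-1}\,dt$ conversion, and the combination of the $G$-exponents.
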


\begin{remark}
Corollary~\ref{cor1} with $S_{16}$, $S_{17}$, $S_{19}$ and $S_{20}$ gives
\begin{multline}
\label{eqcor101}
\int_{0}^{\infty}g(t)G^{\alpha-\gamma-1}(t)K^{p-\alpha+1}(t) t^{\alpha-1} dt\\
\leq\bigg(\frac{p-\alpha+1}{\gamma-\alpha}\bigg)^p
\int_{0}^{\infty}g(t)f^p(t)G^{p-\gamma+\alpha-1}(t) K^{1-\alpha}(t) t^{\alpha-1} dt.
\end{multline}
\end{remark}

\begin{remark}
If we set $\alpha=1$ in inequality \eqref{eqcor101}, 
then \eqref{eqcor101} reduces to \eqref{h24}.
\end{remark}

\begin{remark}
If we use assumptions $S_{16}$, $S_{18}$, $S_{19}$ and $S_{20}$
with Corollary~\ref{cor1}, then \eqref{eqcor1} gives
\begin{multline}
\label{eqcor102}
\int_{0}^{\infty}G^{\alpha-\gamma-1}(t)\bigg( 
\int_0^t f(s)s^{\alpha-1} ds \bigg)^{p-\alpha+1} t^{\alpha-1}  dt\\
\leq\bigg(\frac{p-\alpha+1}{\gamma-\alpha}\bigg)^p
\int_{0}^{\infty} G^{p-\gamma+\alpha-1}(t) f^p(t) \bigg( 
\int_0^t f(s)s^{\alpha-1} ds \bigg)^{1-\alpha} t^{\alpha-1} dt.
\end{multline}
\end{remark}

\begin{remark}
If we set $\alpha=1$ in inequality \eqref{eqcor102},  
then \eqref{eqcor102} gives \eqref{a5}.
\end{remark}

\begin{remark}
Under assumptions $S_{16}$, $S_{18}$, $S_{19}$, $S_{20}$ and $S_{22}$, 
the inequality \eqref{eqcor1} of Corollary~\ref{cor1} asserts that
\begin{multline}
\label{eqcor103}
\int_{0}^{\infty} G^{\alpha-\gamma-1}(t)\bigg( 
\int_0^t f(s)s^{\alpha-1} ds \bigg)^{p-\alpha+1} t^{\alpha-1} dt\\
\leq\bigg(\frac{p-\alpha+1}{p-\alpha}\bigg)^p
\int_{0}^{\infty} G^{\alpha-1}(t) f^p(t) \bigg( 
\int_0^t f(s)s^{\alpha-1} ds \bigg)^{1-\alpha} t^{\alpha-1}  dt.
\end{multline}
\end{remark}

\begin{remark}
If we set $\alpha=1$ in inequality \eqref{eqcor103},   
then \eqref{eqcor103} gives  \eqref{a2}.
\end{remark}

\begin{corollary}
\label{cor2}
Putting $\mathbb{T}=h\mathbb{Z}$ in Theorem~\ref{thm1}, inequality \eqref{eq1} gives
\begin{multline}
\sum_{t=\frac{a}{h}}^{\infty}k(ht+h)v(ht+h)w(ht)g(ht)
G^{-\gamma}(ht+h)K^p(ht+h)t^{\alpha-1}\\
\leq\bigg(\frac{p+\beta}{\gamma-\theta-1}\bigg)^p
\sum_{t=\frac{a}{h}}^{\infty}\frac{k(ht+h)v(ht+h)w(ht)r^p(ht)f^p(ht)
G^{\gamma(p-1)}(ht+h)}{g^{p-1}(ht)G^{p(\gamma-1)}(ht)} t^{\alpha-1},
\end{multline}
where
\begin{equation*}
G(t)=h^{\alpha}\sum_{s=\frac{a}{h}}^{\frac{t}{h}-1}g(hs)s^{\alpha-1} 
\quad \text{and} \quad  
K(t)=h^{\alpha}\sum_{s=\frac{a}{h}}^{\frac{t}{h}-1}r(hs)f(hs)s^{\alpha-1}.
\end{equation*}
\end{corollary}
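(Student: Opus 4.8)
The plan is to obtain Corollary~\ref{cor2} as a direct specialization of Theorem~\ref{thm1} to the time scale $\mathbb{T}=h\mathbb{Z}$, with no new analytic ingredient beyond what already proves \eqref{eq1}. On $h\mathbb{Z}$ the dictionary \eqref{hz} gives $\sigma(t)=t+h$ and $\mu(t)=h$, so every forward-jump term in \eqref{eq1} becomes an $h$-shift: $G^{\sigma}(t)=G(t+h)$, $K^{\sigma}(t)=K(t+h)$, $k^{\sigma}(t)=k(t+h)$ and $v^{\sigma}(t)=v(t+h)$. Combining item~(i) of Section~\ref{sec:2} with the summation rule in \eqref{hz}, for any rd-continuous $\phi$ one has
\[
\int_{a}^{\infty}\phi(t)\,\Delta_{\alpha}t=\int_{a}^{\infty}\phi(t)\,t^{\alpha-1}\,\Delta t=\sum_{t=a/h}^{\infty}h\,\phi(ht)\,(ht)^{\alpha-1}=h^{\alpha}\sum_{t=a/h}^{\infty}\phi(ht)\,t^{\alpha-1},
\]
and the same rule applied to the conformable integrals in $S_6$ and $S_8$ produces the discrete antiderivatives $G(t)=h^{\alpha}\sum_{s=a/h}^{t/h-1}g(hs)s^{\alpha-1}$ and $K(t)=h^{\alpha}\sum_{s=a/h}^{t/h-1}r(hs)f(hs)s^{\alpha-1}$ recorded in the statement.

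First I would apply the displayed conversion to the left-hand side of \eqref{eq1}, replacing $t$ by $ht$ in every factor, $\sigma(t)$ by $ht+h$, and $\Delta_{\alpha}t$ by the summation weight $h^{\alpha}t^{\alpha-1}$; then I would do the same on the right-hand side. Since the prefactor $h^{\alpha}$ is common to both sides it cancels, leaving the weight $t^{\alpha-1}$ visible in each summand, while the exponents of $G^{\sigma}(t)$, $K^{\sigma}(t)$, $g(t)$, and of $G(t)$ in the denominator, together with the constant coming from Theorem~\ref{thm1}, are transcribed directly into the sum. Regrouping the powers of $G(ht+h)$, $G(ht)$, $K(ht+h)$ and $g(ht)$ to match the claimed right-hand side then finishes the argument — this is precisely the discrete counterpart of the substitution $\mathbb{T}=\mathbb{R}$ carried out in Corollary~\ref{cor1}, and of the substitutions $\mathbb{T}=\mathbb{Z}$ and $\mathbb{T}=\overline{q^{\mathbb{Z}}}$ that yield the quantum forms.

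The step that will demand the most care is simply the bookkeeping of the jump operator and of the $t^{\alpha-1}$ weights: one must keep $\sigma(t)=ht+h$ distinct from $ht$ inside $G^{\sigma}$ and $K^{\sigma}$, remember that the summand $g(t)$ of \eqref{eq1} becomes $g(ht)$ while the extra factor $t^{\alpha-1}$ originates solely from the $\Delta_{\alpha}$-measure, and check that the powers of the antiderivatives are carried over with no change of sign or coefficient. No inequality needs to be re-proved — Theorem~\ref{thm1} does all the analytic work — so there is no genuine obstacle here; the content is entirely notational, and once $\mathbb{T}=h\mathbb{Z}$ is inserted the estimate is immediate.
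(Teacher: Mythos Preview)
Your approach is exactly the paper's: the corollary is stated without proof as the direct specialization of Theorem~\ref{thm1} to $\mathbb{T}=h\mathbb{Z}$ via the dictionary \eqref{hz} and item~(i) of Section~\ref{sec:2}, and you have spelled out that substitution correctly. The only caveat is the bookkeeping you already flag: a literal transcription of the exponents and constant from \eqref{eq1} gives $(G^\sigma)^{\alpha-\gamma-1}$, $(K^\sigma)^{p-\alpha+1}$ and $\big(\tfrac{p+\beta-\alpha+1}{\gamma-\theta-\alpha}\big)^p$, so the displayed form in the corollary tacitly simplifies these as if $\alpha=1$ while retaining the weight $t^{\alpha-1}$ --- be aware of that when you ``regroup the powers''.
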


\begin{corollary}
\label{cor3}
If $\mathbb{T}=\mathbb{Z}$ ($h=1$) in Corollary \ref{cor2}, 
then it follows from inequality \eqref{eq1} that
\begin{multline}
\label{eqcor3}
\sum_{t=a}^{\infty}k(t+1)v(t+1)w(t)g(t)G^{-\gamma}(t+1)K^p(t+1)t^{\alpha-1}\\
\leq\bigg(\frac{p+\beta}{\gamma-\theta-1}\bigg)^p
\sum_{t=a}^{\infty}\frac{k(t+1)v(t+1)w(t)r^p(t)f^p(t)
G^{\gamma(p-1)}(t+1)}{g^{p-1}(t)G^{p(\gamma-1)}(t)}t^{\alpha-1},
\end{multline}
where
\begin{equation*}
G(t)=\sum_{s=a}^{t-1}g(s)s^{\alpha-1} 
\quad \text{and} \quad  
K(t)=\sum_{s=a}^{t-1}r(s)f(s)s^{\alpha-1}.
\end{equation*}
\end{corollary}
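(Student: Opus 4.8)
The plan is to obtain Corollary~\ref{cor3} as the $h=1$ instance of Corollary~\ref{cor2}; essentially nothing new has to be proved, since Corollary~\ref{cor2} is itself inequality \eqref{eq1} of Theorem~\ref{thm1} transcribed onto the time scale $\mathbb{T}=h\mathbb{Z}$ via the dictionary \eqref{hz}. So it suffices to set $h=1$ and simplify.

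Concretely, first I would substitute $h=1$ into the relations \eqref{hz}, which collapse to \eqref{z}: $\sigma(t)=t+1$, $\mu(t)=1$, and $\int_a^b\eta(t)\,\Delta t=\sum_{t=a}^{b-1}\eta(t)$. Combined with the conformable identity $\int_a^b\eta(t)\,\Delta_\alpha t=\int_a^b\eta(t)\,t^{\alpha-1}\,\Delta t$ recalled in Section~\ref{sec:2}, this yields
\[
\int_a^\infty \eta(t)\,\Delta_\alpha t=\sum_{t=a}^\infty \eta(t)\,t^{\alpha-1},
\]
so that the auxiliary functions of $S_6$ and $S_8$ become the finite sums
\[
G(t)=\sum_{s=a}^{t-1}g(s)\,s^{\alpha-1},\qquad K(t)=\sum_{s=a}^{t-1}r(s)f(s)\,s^{\alpha-1},
\]
exactly the expressions recorded in the statement.

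Next I would carry these data into Corollary~\ref{cor2} (equivalently, directly into \eqref{eq1}): every argument $ht$ becomes $t$, every $ht+h$ becomes $t+1$, the lower index $a/h$ becomes $a$, every superscript-$\sigma$ term is replaced by the shift $t\mapsto t+1$ (including those buried inside $G^\sigma$ and $K^\sigma$), and the outer $\Delta_\alpha$-integral turns into the series $\sum_{t=a}^\infty(\cdot)\,t^{\alpha-1}$ by the identity above. Collecting the powers of $G(t+1)$, $K(t+1)$, $g(t)$ and $G(t)$ on the two sides then reproduces inequality \eqref{eqcor3}, with the constant $\big((p+\beta)/(\gamma-\theta-1)\big)^p$ carried over verbatim.

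Since the whole argument is a pure specialization, there is no genuine obstacle; the only point requiring care is bookkeeping — keeping the conformable weight $t^{\alpha-1}$ attached consistently both to the outer summation variable and to the summation variable of the inner sums defining $G$ and $K$, and applying the jump $\sigma(t)=t+1$ to every $\sigma$-decorated term before the exponents are simplified.
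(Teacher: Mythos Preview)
Your proposal is correct and mirrors the paper's own treatment: the paper offers no separate proof for this corollary, simply stating that it is the $h=1$ instance of Corollary~\ref{cor2} (equivalently, inequality~\eqref{eq1} on $\mathbb{T}=\mathbb{Z}$), and your write-up carries out precisely that specialization using the dictionaries~\eqref{z} and~\eqref{hz}.
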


\begin{remark}
Using assumptions $S_{16}$, $S_{17}$, $S_{19}$ and $S_{21}$,
it follows from \eqref{eqcor3} of Corollary~\ref{cor3} that
\begin{equation}
\sum_{t=1}^{\infty}\frac{g(t)\Big(\sum_{s=1}^{t}
g(s)f(s)\Big)^p}{\Big(\sum_{s=1}^{t}g(s)\Big)^\gamma}
\leq\bigg(\frac{p}{\alpha-1}\bigg)^p
\sum_{t=1}^{\infty}\frac{g(t)f^p(t)\Big(
\sum_{s=1}^{t}g(s)\Big)^{\gamma(p-1)}}{\Big(
\sum_{s=1}^{t-1}g(s)\Big)^{p(\gamma-1)}},
\end{equation}
which is another form of the discrete inequality \eqref{h20}.
\end{remark}

\begin{corollary}
\label{cor8}
Putting $\mathbb{T}=\overline{q^\mathbb{Z}}$ in Theorem~\ref{thm1},
inequality \eqref{eq1} with \eqref{q^z} gives
\begin{multline}
\sum_{t=\log_q a}^{\infty}k(q^{t+1})v(q^{t+1})
w(q^t)g(q^t)G^{-\gamma}(q^{t+1})K^p(q^{t+1})q^{\alpha t}\\
\leq\bigg(\frac{p+\beta}{\gamma-\theta-1}\bigg)^p
\sum_{t=\log_q a}^{\infty}\frac{k(q^{t+1})v(q^{t+1})
w(q^t)r^p(q^t)f^p(q^t)G^{\gamma(p-1)}(q^{t+1})}{g^{p-1}(q^t)
G^{p(\gamma-1)}(q^t)}q^{\alpha t},
\end{multline}
where
\begin{equation*}
G(t)=(q-1)\sum_{s=\log_q a}^{(\log_q t)-1} g(q^s)q^{\alpha s} 
\quad \text{and} \quad  
K(t)=(q-1)\sum_{s=\log_q a}^{(\log_q t)-1} r(q^s)f(q^s)q^{\alpha s}.
\end{equation*}
\end{corollary}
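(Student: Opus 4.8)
The plan is to obtain Corollary~\ref{cor8} as a direct specialization of Theorem~\ref{thm1} to the quantum time scale $\mathbb{T}=\overline{q^{\mathbb{Z}}}$, exactly as Corollaries~\ref{cor1}--\ref{cor3} were obtained for $\mathbb{R}$, $h\mathbb{Z}$, and $\mathbb{Z}$. So there is nothing to prove beyond translating inequality \eqref{eq1} into the explicit $q$-summation notation using the dictionary \eqref{q^z}. First I would record that on $\mathbb{T}=\overline{q^{\mathbb{Z}}}$ we have $\sigma(t)=qt$, so $G^\sigma(t)=G(qt)$, $K^\sigma(t)=K(qt)$, and likewise $k^\sigma=k\circ\sigma$, $v^\sigma=v\circ\sigma$; the points of $\mathbb{T}$ in $[a,\infty)$ are $q^t$ for $t\ge\log_q a$, so evaluating at $t=q^s$ turns $\sigma$-shifts into index shifts $s\mapsto s+1$, i.e. $G^\sigma(q^s)=G(q^{s+1})$.

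Next I would convert the two conformable integrals. By item (i) of the preliminaries, $\int\eta\,\Delta_\alpha t=\int\eta(t)t^{\alpha-1}\Delta t$, and by \eqref{q^z} the ordinary delta integral on $\overline{q^{\mathbb{Z}}}$ is $\int_a^b\phi(t)\Delta t=(q-1)\sum_{t=\log_q a}^{\log_q b-1}q^t\phi(q^t)$. Applying this to the left side of \eqref{eq1} with $\phi(t)=k^\sigma(t)v^\sigma(t)w(t)g(t)(G^\sigma(t))^{\alpha-\gamma-1}(K^\sigma(t))^{p-\alpha+1}t^{\alpha-1}$ and evaluating at $t=q^s$ produces the factor $q^s\cdot(q^s)^{\alpha-1}=q^{\alpha s}$, which after absorbing the common constant $(q-1)$ into both sides gives the stated quantum sum. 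The same substitution on the right side of \eqref{eq1} gives the right-hand sum. I would then note that the auxiliary functions reduce via $S_6$ and $S_8$ to $G(t)=\int_a^t g(s)\Delta_\alpha s=(q-1)\sum_{s=\log_q a}^{(\log_q t)-1}g(q^s)q^{\alpha s}$ and similarly for $K$, which matches the definitions displayed in the corollary.

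Finally I would observe that the exponents in the corollary's display are written in the abbreviated form with $G^{-\gamma}(q^{t+1})$, $K^p(q^{t+1})$, and $G^{\gamma(p-1)}(q^{t+1})$ and the constant $\bigl(\tfrac{p+\beta}{\gamma-\theta-1}\bigr)^p$; this is precisely the $\alpha=1$-looking normalization already used in Corollaries~\ref{cor2} and \ref{cor3}, obtained from \eqref{eq1} by collecting the $G^\sigma$ powers $(\alpha-\gamma-1)$ and $(1-\alpha+\gamma)(p-1)$ together with the stray $(K^\sigma)^{1-\alpha}$ factor — i.e. the corollary as printed is the $h\mathbb{Z}$/$\mathbb{Z}$-style reduction transported to $\overline{q^{\mathbb{Z}}}$, so the derivation is identical to that of Corollary~\ref{cor3} with $\sigma(t)=qt$ in place of $\sigma(t)=t+1$ and the weight $q^{\alpha t}$ in place of $t^{\alpha-1}$. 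The only mild obstacle is bookkeeping: making sure the $(q-1)$ prefactors and the $q^{\alpha s}$ weights from the $\Delta$-to-sum conversion appear symmetrically on both sides so they cancel, and that the index range $t\ge\log_q a$ is handled consistently at the lower endpoint (where $K(a)=0$, matching the boundary term in the proof of Theorem~\ref{thm1}). No new inequality is invoked — Keller's chain rule, integration by parts, and Hölder were already spent in proving Theorem~\ref{thm1}.
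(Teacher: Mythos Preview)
Your proposal is correct and matches the paper's approach exactly: the paper gives no separate proof for this corollary, treating it as an immediate specialization of Theorem~\ref{thm1} to $\mathbb{T}=\overline{q^{\mathbb{Z}}}$ via the dictionary \eqref{q^z}, just as Corollaries~\ref{cor2} and \ref{cor3} were obtained for $h\mathbb{Z}$ and $\mathbb{Z}$. Your observation about the exponents being written in the $\alpha=1$-style form is also apt --- the paper uses the same abbreviated presentation here as in those earlier discrete corollaries.
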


\begin{theorem}
\label{thm2}
Let $S_1$, $S_2$, $S_3$, $S_5$, $S_6$, $S_9$, $S_{12}$, and $S_{15}$ be satisfied. Then,
\begin{multline}
\label{eq2}
\int_{a}^{\infty}k(t)v(t)w^\sigma(t)g(t)\big(G^\sigma(t)\big)^{\alpha-\gamma+1}
F^{p-\alpha+1}(t)\Delta_\alpha t\\
\leq\Big(\frac{p+\beta-\alpha+1}{\alpha-\gamma+\theta}\Big)^p
\int_{a}^{\infty}\frac{k(t)v(t)w^\sigma(t)r^p(t)
f^p(t)\big(G^\sigma(t)\big)^{p-\gamma-\alpha+1}  
F^{1-\alpha}(t) }{g^{p-1}(t)}\Delta_\alpha t.
\end{multline}
\end{theorem}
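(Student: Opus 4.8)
The proof parallels that of Theorem~\ref{thm1}, with the increasing integral $K$ replaced by the tail integral $F$ and the hypotheses $S_{10}$, $S_{14}$ replaced by $S_{12}$, $S_{15}$. I would start from the $\alpha$-conformable integration by parts formula \eqref{parts} applied to the left-hand side of \eqref{eq2}, splitting the integrand as the product of $k(t)v(t)F^{p-\alpha+1}(t)$ and the weight $w^\sigma(t)g(t)\big(G^\sigma(t)\big)^{\alpha-\gamma+1}$, with $u$ an $\alpha$-conformable antiderivative of the latter. Using $S_6$ (hence $G(\infty)=\infty$), the sign of the relevant power of $G^\sigma$ provided by $S_5$, and $F(\infty)=0$, the boundary contributions vanish and the left-hand side of \eqref{eq2} becomes $\int_a^\infty(-u(t))\,T_\alpha^{\Delta}\!\big(k(t)v(t)F^{p-\alpha+1}(t)\big)\,\Delta_\alpha t$.

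Two pointwise estimates then carry the argument, exactly as in Theorem~\ref{thm1}. For the first, I would combine the product rule \eqref{product}, the chain rule \eqref{chain1} applied to a power $G^{\alpha-\gamma}$ of $G$, the hypothesis $S_{12}$, the nonnegativity of $T_\alpha^{\Delta}G=g$, and the monotonicity of $G$ together with the sign of $\alpha-\gamma$ (positive, by $S_5$), to dominate the weight that $u$ integrates by $\tfrac{1}{\alpha-\gamma+\theta}\,T_\alpha^{\Delta}\!\big(w(s)G^{\alpha-\gamma}(s)\big)$ --- here $\alpha-\gamma+\theta>0$ by $S_5$ and $S_3$ --- and then integrate and telescope to get $-u(t)\le\tfrac{1}{\alpha-\gamma+\theta}\,w(t)G^{\alpha-\gamma}(t)$. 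For the second, I would apply \eqref{product} and \eqref{chain1} to $F^{p-\alpha+1}$, using $T_\alpha^{\Delta}F=-rf\le 0$, the monotonicity of $k$ from $S_2$ (whose contribution is of the right sign to be discarded), and $S_{15}$, to obtain $-T_\alpha^{\Delta}\!\big(k(t)v(t)F^{p-\alpha+1}(t)\big)\le(p+\beta-\alpha+1)\,k(t)v(t)\,r(t)f(t)\,F^{p-\alpha}(t)$, the coefficient $p+\beta-\alpha+1$ arising as $(p-\alpha+1)$ from the chain-rule term and $\beta$ from $S_{15}$.

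Substituting these two bounds into the integration-by-parts identity estimates the left-hand side of \eqref{eq2} by $\tfrac{p+\beta-\alpha+1}{\alpha-\gamma+\theta}\int_a^\infty k(t)v(t)w^\sigma(t)\,r(t)f(t)\,G^{\alpha-\gamma}(t)(\cdots)\,\Delta_\alpha t$. As in the closing lines of the proof of Theorem~\ref{thm1}, I would then factor the integrand on the right as the product of $\big(\text{integrand of the left-hand side of }\eqref{eq2}\big)^{(p-1)/p}$ and a remaining factor, apply the $\alpha$-conformable H\"older inequality \eqref{holder} with conjugate indices $p$ and $p/(p-1)$, note that the first H\"older factor reproduces the $(p-1)/p$-th power of the left-hand side of \eqref{eq2}, and divide and raise to the power $p$ to reach \eqref{eq2} with constant $\big(\tfrac{p+\beta-\alpha+1}{\alpha-\gamma+\theta}\big)^p$.

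I expect the main obstacle to be the sign bookkeeping in the two pointwise estimates. The chain rule \eqref{chain1} only delivers the power of $G$ (respectively $F$) evaluated at some unspecified $c\in[s,\sigma(s)]$, so one must use the monotonicity of $G$ (resp.\ $F$) and the sign of the exponent --- controlled by $S_5$ and by $p\ge1$ --- to replace it by the value at $\sigma(s)$ in the direction that keeps the inequality pointing the right way; in the same vein, the term coming from $S_{12}$ features $G$ rather than $G^\sigma$ and must be compared with $G^\sigma$. The remaining delicate point, routine in this circle of results, is checking that the boundary terms in the integration by parts truly vanish, which relies on $G(\infty)=\infty$ from $S_6$, on $F(\infty)=0$, and on the constraint $0\le\gamma<\alpha$ from $S_5$.
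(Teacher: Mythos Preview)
Your overall plan matches the paper's --- integration by parts, two pointwise estimates from $S_{12}$ and $S_{15}$ via the chain rule, then H\"older --- but there is a genuine gap in your choice of the antiderivative $u$.

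You parallel Theorem~\ref{thm1} and effectively take $-u(t)=\int_t^\infty w^\sigma g\,(G^\sigma)^{\alpha-\gamma-1}\Delta_\alpha s$, then ``telescope'' to claim $-u(t)\le\tfrac{1}{\alpha-\gamma+\theta}\,w(t)G^{\alpha-\gamma}(t)$. In Theorem~\ref{thm1} that works because $S_4$ forces $\alpha-\gamma<0$, so $w(s)G^{\alpha-\gamma}(s)\to 0$ as $s\to\infty$ and the telescope $\int_t^\infty T_\alpha^\Delta\big(wG^{\alpha-\gamma}\big)$ closes at the top. Here $S_5$ gives $\alpha-\gamma>0$, so by $S_6$ one has $G^{\alpha-\gamma}(s)\to\infty$: your tail integral diverges and the telescope yields $+\infty$, not the bound you state. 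The paper instead sets
\[
u(t)=\int_a^{t} w^\sigma(s)\,g(s)\,\big(G^\sigma(s)\big)^{\alpha-\gamma-1}\,\Delta_\alpha s,
\]
so that $u(a)=0$ and $F(\infty)=0$ dispose of the boundary terms; integration by parts then produces $u^\sigma(t)$ (not $-u(t)$) in the remaining integral, and telescoping from $a$ to $\sigma(t)$ (using $G(a)=0$ and $\alpha-\gamma>0$) gives
\[
u^\sigma(t)\le\frac{1}{\alpha-\gamma+\theta}\,w^\sigma(t)\big(G^\sigma(t)\big)^{\alpha-\gamma}.
\]
The $\sigma$'s here are not cosmetic: they are exactly what is needed so that the subsequent H\"older factorization reproduces the $(p-1)/p$-th power of the left-hand side, and so that $w^\sigma$ and $G^\sigma$ appear on the right of \eqref{eq2}. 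With this single correction to the direction of $u$, your second pointwise estimate and the closing H\"older step go through as you describe.
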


\begin{proof}
From \eqref{parts}, we get
\begin{multline}
\label{11}
\int_{a}^{\infty}k(t)v(t)w^\sigma(t)g(t)\big(
G^\sigma(t)\big)^{\alpha-\gamma-1}F^{p-\alpha+1}(t)\Delta_{\alpha} t\\
=\Big[u(t)k(t)v(t)F^{p-\alpha+1}(t)\Big]_a^\infty
+\int_{a}^{\infty}u^\sigma(t) T_{\alpha}^{\Delta} 
\Big(-k(t)v(t)F^{p-\alpha+1}(t)\Big) \Delta_\alpha t,
\end{multline}
since
\begin{equation*}
u(t)=\int_{a}^{t}w^\sigma(s)g(s)\big(G^\sigma(s)\big)^{\alpha-\gamma-1}\Delta_\alpha s.
\end{equation*}
Applying \eqref{chain1}, \eqref{product}, and  $S_{12}$, one has
\begin{eqnarray*}
T_{\alpha}^{\Delta} \Big(w(s)G^{\alpha-\gamma}(s)\Big)
&=& T_{\alpha}^{\Delta} w(s)G^{\alpha-\gamma}(s)
+w^\sigma(s)T_{\alpha}^{\Delta} \big(G^{\alpha-\gamma}(s)\big)\\
&\geq&\theta w^\sigma(s)G^{\alpha-\gamma-1}(s)
T_{\alpha}^{\Delta} G(s)+(\alpha-\gamma)w^\sigma(s)
G^{\alpha-\gamma-1}(c) T_{\alpha}^{\Delta} G(s)
\end{eqnarray*}
for $c\in[s,\sigma(s)]$. As $T_{\alpha}^{\Delta} G(s)=g(s)\geq0$, $c\leq\sigma(s)$ 
and $0\leq \gamma <\alpha$, then we get
\begin{eqnarray*}
T_{\alpha}^{\Delta} \Big(w(s)G^{\alpha-\gamma}(s)\Big)
&\geq&\theta w^\sigma(s)g(s)\big(G^\sigma(s)\big)^{\alpha-\gamma-1}
+(\alpha-\gamma)w^\sigma(s)g(s)\big(G^\sigma(s)\big)^{\alpha-\gamma-1}\\
&=&(\alpha-\gamma+\theta)w^\sigma(s)g(s)\big(G^\sigma(s)\big)^{\alpha-\gamma-1}
\end{eqnarray*}
so that
\begin{equation*}
w^\sigma(s)g(s)\big(G^\sigma(s)\big)^{\alpha-\gamma-1}
\leq\frac{1}{\alpha-\gamma+\theta} T_{\alpha}^{\Delta} 
\Big(w(s)G^{\alpha-\gamma}(s)\Big).
\end{equation*}
Therefore,
\begin{equation}
\label{22}
\begin{split}
u^\sigma(t) =\int_{a}^{\sigma(t)}w^\sigma(s)
g(s)\big(G^\sigma(s)\big)^{\alpha-\gamma-1}\Delta_\alpha s
&\leq\frac{1}{\alpha-\gamma+\theta}\int_{a}^{\sigma(t)} 
T_{\alpha}^{\Delta} \Big(w(s)G^{\alpha-\gamma}(s)\Big)\Delta_{\alpha} s\\
&=\frac{1}{\alpha-\gamma+\theta}w^\sigma(t)\big(G^\sigma(t)\big)^{\alpha-\gamma}.
\end{split}
\end{equation}
Using \eqref{product} and \eqref{chain1}, let $c\in[t,\sigma(t)]$. Then,
\begin{eqnarray*}
T_{\alpha}^{\Delta} \Big(-k(t)v(t)F^{p-\alpha+1}(t)\Big)
&=&-\Big(\big(k(t)v(t)\big)^\Delta\big(F^\sigma(t)\big)^{p-\alpha+1}
+k(t)v(t)\big(F^{p-\alpha+1}(t)\big)^\Delta\Big)\\
&=&-\Big(k^\Delta(t)v^\sigma(t)\big(F^\sigma(t)\big)^{p-\alpha+1}
+k(t)v^\Delta(t)\big(F^\sigma(t)\big)^{p-\alpha+1}\\
&& \quad \quad +(p-\alpha+1)k(t)v(t)F^{p-\alpha}(c) T_{\alpha}^{\Delta} F(t)\Big).
\end{eqnarray*}
From $0\leq T_{\alpha}^{\Delta} k(t)$, $-r(t)f(t)=T_{\alpha}^{\Delta} F(t)\leq0$, 
$t\leq c$, $p \geq 1$ and $S_{15}$, we have
\begin{equation}
\label{44}
\begin{split}
T_{\alpha}^{\Delta} \Big(-k(t)v(t)F^{p-\alpha+1}(t)\Big)
&\leq\beta k(t)v(t)r(t)f(t)\big(F^\sigma(t)\big)^{p-\alpha}
+(p-\alpha+1)k(t)v(t)r(t)f(t)F^{p-\alpha}(t)\\
&\leq(p-\alpha+\beta+1)k(t)v(t)r(t)f(t)F^{p-\alpha}(t).
\end{split}
\end{equation}
Using \eqref{11}, \eqref{22} and \eqref{44}, 
it follows that ($F(\infty) = 0$ and $u(a) = 0$)
\begin{eqnarray*}
&&\int_{a}^{\infty}k(t)v(t)w^\sigma(t)g(t)\big(G^\sigma(t)\big)^{\alpha
-\gamma-1}\big(F^\sigma(t)\big)^{p-\alpha+1}\Delta_\alpha t\\
&&\leq\frac{(p-\alpha+\beta+1)}{\alpha-\gamma+\theta}
\int_{a}^{\infty}k(t)v(t)w^\sigma(t)r(t)
f(t)\big(G^\sigma(t)\big)^{\alpha-\gamma}F^{p-\alpha}(t)\Delta_\alpha t.
\end{eqnarray*}
Equivalently,
\begin{eqnarray*}
&&\int_{a}^{\infty}k(t)v(t)w^\sigma(t)
g(t)\big(G^\sigma(t)\big)^{\alpha-\gamma-1} F^{p-\alpha+1}(t)\Delta_\alpha t\\
&&\leq\frac{p+\beta-\alpha+1}{\alpha-\gamma+\theta}
\int_{a}^{\infty}\bigg(\big(k(t)v(t)w^\sigma(t)
g(t)\big)^{(p-1)/p}\big(G^\sigma(t)\big)^{(\alpha
-\gamma-1)(p-1)/p}F^{(p-1)(p-\alpha+1)/p}(t)\bigg)\\
&&\quad\times\bigg(\frac{\big(k(t)v(t)w^\sigma(t)\big)^{1/p}
r(t)f(t)\big(G^\sigma(t)\big)^{(p-\gamma+\alpha-1)/p} 
F^{(1-\alpha)/p}}{g^{(p-1)/p}(t)}
\bigg)\Delta_\alpha t.
\end{eqnarray*}
Applying \eqref{holder} with indexes $p$ and $p/(p-1)$, we obtain
\begin{eqnarray*}
&&\int_{a}^{\infty}k(t)v(t)w^\sigma(t)g(t)\big(G^\sigma(t)\big)^{\alpha-\gamma+1}
F^{p-\alpha+1}(t)\Delta_\alpha t\\
&&\leq\frac{p+\beta-\alpha+1}{\alpha-\gamma+\theta}
\bigg(\int_{a}^{\infty}k(t)v(t)w^\sigma(t)g(t)\big(G^\sigma(t)\big)^{\alpha-\gamma-1}
F^{p-\alpha+1}(t) \Delta_\alpha t\bigg)^{(p-1)/p}\\
&&\quad\times\bigg(\int_{a}^{\infty} \frac{k(t)v(t)w^\sigma(t)
r^p(t)f^p(t)\big(G^\sigma(t)\big)^{p-\gamma+\alpha-1} 
F^{1-\alpha}(t)}{g^{p-1}(t)} \Delta_\alpha t\bigg)^{1/p}.
\end{eqnarray*}
This gives
\begin{eqnarray*}
&&\int_{a}^{\infty}k(t)v(t)w^\sigma(t)g(t)\big(G^\sigma(t)\big)^{\alpha-\gamma+1}
F^{p-\alpha+1}(t)\Delta_\alpha t\\
&&\leq\Big(\frac{p+\beta-\alpha+1}{\alpha-\gamma+\theta}\Big)^p
\int_{a}^{\infty}\frac{k(t)v(t)w^\sigma(t)r^p(t)
f^p(t)\big(G^\sigma(t)\big)^{p-\gamma+\alpha-1}  
F^{1-\alpha}(t) }{g^{p-1}(t)}\Delta_\alpha t,
\end{eqnarray*}
which is our desired result.
\end{proof}	

\begin{corollary}
If we take $\alpha=1$ in Theorem~\ref{thm2},  
then we get the following inequality:
\begin{eqnarray*}
&&\int_{a}^{\infty}k(t)v(t)w^\sigma(t)g(t)\big(
G^\sigma(t)\big)^{-\gamma}F^p(t)\Delta t\\
&&\leq\Big(\frac{\beta+p}{1-\gamma+\theta}\Big)^p
\int_{a}^{\infty}\frac{k(t)v(t)w(t)r^p(t)f^p(t)
\big(G^\sigma(t)\big)^{p-\gamma}}{g^{p-1}(t)}\Delta t,
\end{eqnarray*}
where
\begin{equation*}
G(t)=\int_{a}^{t}g(s)\Delta s 
\quad \text{with} \quad 
G(\infty)=\infty, \quad and \quad F(t)=\int_{t}^{\infty}r(s)f(s)\Delta s,
\end{equation*}
which is Theorem~3.11 of \cite{dd1}.
\end{corollary}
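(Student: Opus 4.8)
The plan is to obtain this corollary as a pure specialization of Theorem~\ref{thm2}, taking $\alpha = 1$ so that the entire $\alpha$-conformable apparatus collapses to the ordinary delta calculus on $\mathbb{T}$ and the asserted inequality drops out by simplifying the exponents in \eqref{eq2}. First I would invoke item (i) of Section~\ref{sec:2}, namely $(\eta)^{\Delta_\alpha}(t) = \eta^\Delta(t)\,t^{1-\alpha}$: at $\alpha = 1$ the weight $t^{1-\alpha}$ becomes $t^0 = 1$, so $T_1^{\Delta} = \Delta$ and $\Delta_1 t = \Delta t$. Consequently the structural hypotheses reduce cleanly: $S_5$ becomes $0 \le \gamma < 1$, the conformable ratios in $S_{12}$ and $S_{15}$ become the ordinary bounds $w^\Delta/w^\sigma \ge \theta\,(G^\Delta/G)$ and $v^\Delta/v \ge \beta\,(F^\sigma$-type$)$, and $S_6$, $S_9$ yield the standard integrals $G(t) = \int_a^t g(s)\,\Delta s$ with $G(\infty) = \infty$ and $F(t) = \int_t^\infty r(s)f(s)\,\Delta s$.

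Next I would substitute $\alpha = 1$ termwise into the integrand of \eqref{eq2}, using the exponents in the form appearing in the derivation of that bound. On the left, the power $\alpha - \gamma - 1$ on $G^\sigma$ becomes $-\gamma$, the power $p - \alpha + 1$ on $F$ becomes $p$, and $\Delta_\alpha t$ becomes $\Delta t$. In the multiplicative constant, the numerator $p + \beta - \alpha + 1$ becomes $p + \beta$ and the denominator $\alpha - \gamma + \theta$ becomes $1 - \gamma + \theta$; here I would note that $S_5$ at $\alpha = 1$ forces $1 - \gamma > 0$, so together with $\theta \ge 0$ this keeps $1 - \gamma + \theta > 0$ and the constant $\bigl(\tfrac{\beta + p}{1 - \gamma + \theta}\bigr)^p$ well defined. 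On the right, the power $p - \gamma - \alpha + 1$ on $G^\sigma$ becomes $p - \gamma$, while $F^{1-\alpha}$ becomes $F^0 = 1$ and hence disappears from the integrand, leaving the denominator $g^{p-1}(t)$ unchanged.

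Collecting these simplifications produces exactly the displayed inequality, with $G$ and $F$ as the ordinary delta-integrals above, and I would close by remarking that this coincides with Theorem~3.11 of \cite{dd1}. Since the argument is entirely a substitution into an already-proved estimate, there is no genuine analytic obstacle; the only care needed is careful bookkeeping of the several exponents, the verification that the denominator remains positive under $S_5$ with $\alpha = 1$, and the check that the reduced forms of $S_{12}$ and $S_{15}$ match the hypotheses imposed in \cite{dd1}.
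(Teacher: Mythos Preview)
Your proposal is correct and matches the paper's approach exactly: the corollary is stated without a separate proof, as it is just the specialization $\alpha=1$ of Theorem~\ref{thm2}, and your careful bookkeeping of the exponents and the reduction of $S_5$, $S_{12}$, $S_{15}$ to their ordinary delta-calculus forms is precisely the intended argument. If anything, you are more explicit than the paper, since you also check that $1-\gamma+\theta>0$ under the reduced hypothesis $0\le\gamma<1$.
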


\begin{remark}
Under hypotheses $S_{16}$, $S_{17}$ and  $S_{19}$,
then \eqref{eq2} of Theorem~\ref{thm2} tell us that
\begin{multline} 
\label{105} 
\int_{a}^{\infty}g(t)\big(G^\sigma(t)\big)^{\alpha-\gamma+1}
F^{p-\alpha+1}(t)\Delta_\alpha t\\
\leq\Big(\frac{p-\alpha+1}{\alpha-\gamma}\Big)^p
\int_{a}^{\infty}g(t)f^p(t)\big(G^\sigma(t)\big)^{p-\gamma+\alpha-1}  
F^{1-\alpha}(t) \Delta_\alpha t.
\end{multline}
\end{remark}

\begin{remark}
If we set $\alpha=1$ in \eqref{105},   
then we obtain inequality \eqref{saker2}.
\end{remark}

As special cases of our results, now we obtain continuous, discrete and quantum 
$\alpha$-conformable inequalities. Precisely, we consider the special cases of
time scales $\mathbb{T}=\mathbb{R}$, $\mathbb{T}=h\mathbb{Z}$, $\mathbb{T}=\mathbb{Z}$ 
and $\mathbb{T}=\overline{q^\mathbb{Z}}$.

\begin{corollary}
\label{cor5}
Putting $\mathbb{T}=\mathbb{R}$ in Theorem~\ref{thm2} 
we get from \eqref{eq2} and \eqref{r} that
\begin{multline}
\label{eqcor5}
\int_{a}^{\infty}k(t)v(t)w(t)g(t)G^{\alpha-\gamma-1}(t)
F^{p-\alpha+1}(t) t^{\alpha-1} dt\\
\leq\Big(\frac{p+\beta-\alpha+1}{\alpha-\gamma+\theta}\Big)^p
\int_{a}^{\infty} \frac{k(t)v(t)w(t)r^p(t)f^p(t)G^{p-\gamma
+\alpha-1}(t)F^{1-\alpha}(t)}{g^{p-1}(t)} t^{\alpha-1} dt
\end{multline}
with
\begin{equation*}
G(t)=\int_{a}^{t}g(s) s^{\alpha-1} ds 
\quad \text{and} \quad  
F(t)=\int_{t}^{\infty}r(s)f(s) s^{\alpha-1} ds.
\end{equation*}
\end{corollary}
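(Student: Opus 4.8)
The plan is to obtain Corollary~\ref{cor5} as the continuous specialization of Theorem~\ref{thm2}, translating the $\alpha$-conformable delta operations into ordinary calculus by means of the conversion formulas recalled in Section~\ref{sec:2}. Since $\mathbb{R}$ is a time scale and all the structural and sign hypotheses $S_1$, $S_2$, $S_3$, $S_5$, $S_6$, $S_9$, $S_{12}$, $S_{15}$ are simply inherited in the case $\mathbb{T}=\mathbb{R}$, inequality \eqref{eq2} holds verbatim; the only task is to rewrite each ingredient in classical notation.

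First I would invoke \eqref{r} to put $\sigma(t)=t$ (so that $\mu(t)=0$), which collapses every forward shift: $w^\sigma(t)=w(t)$ and $G^\sigma(t)=G(t)$, while $k,v,g,r,f$ are unaffected. Next, using the identity $\int_{a}^{b}\eta(t)\Delta_{\alpha}t=\int_{a}^{b}\eta(t)t^{\alpha-1}\Delta t$ from item~(i) together with $\int_{a}^{b}\eta(t)\Delta t=\int_{a}^{b}\eta(t)\,dt$ from \eqref{r}, I would replace each $\Delta_\alpha$-integral over $[a,\infty)$ by the weighted Riemann integral $\int_{a}^{\infty}(\,\cdot\,)\,t^{\alpha-1}\,dt$. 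Applying the same identity to the definitions $S_6$ and $S_9$ yields $G(t)=\int_{a}^{t}g(s)s^{\alpha-1}\,ds$ and $F(t)=\int_{t}^{\infty}r(s)f(s)s^{\alpha-1}\,ds$, which are exactly the auxiliary functions displayed in the corollary.

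Substituting these identifications into \eqref{eq2} produces \eqref{eqcor5} directly. There is no genuine analytic obstacle here --- the argument is pure bookkeeping --- so the only point deserving care is the consistent placement of the fractional weight $t^{\alpha-1}$: it must appear in both integrals as well as inside $G$ and $F$, and one should double-check that the powers of $G^\sigma=G$ on each side of \eqref{eq2} are transcribed without an off-by-one slip once $\sigma$ is dropped, so that the left-hand factor reads $G^{\alpha-\gamma-1}(t)$ and the right-hand factor reads $G^{p-\gamma+\alpha-1}(t)$.
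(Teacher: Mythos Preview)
Your proposal is correct and matches the paper's approach exactly: the corollary is stated without a separate proof, being obtained simply by specializing Theorem~\ref{thm2} to $\mathbb{T}=\mathbb{R}$ via the conversion rules \eqref{r} and item~(i), precisely as you describe. Your caution about the exponent is well placed, since the left-hand side of \eqref{eq2} in the statement of Theorem~\ref{thm2} carries a typo ($\alpha-\gamma+1$ instead of the $\alpha-\gamma-1$ used consistently throughout its proof), and the corollary silently corrects this.
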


\begin{remark}
Under assumptions $S_{16}$, $S_{17}$, $S_{19}$ and  $S_{20}$,
inequality \eqref{eqcor5} of Corollary~\ref{cor5} asserts that
\begin{multline}
\label{eqcor55}
\int_{0}^{\infty}g(t)G^{\alpha-\gamma-1}(t)
F^{p-\alpha+1}(t) t^{\alpha-1} dt\\
\leq\Big(\frac{p+\beta-\alpha+1}{\alpha-\gamma+\theta}\Big)^p
\int_{0}^{\infty}g(t)f^p(t) G^{p-\gamma+\alpha-1}(t)
F^{1-\alpha}(t) t^{\alpha-1} dt.
\end{multline}
\end{remark}

\begin{remark}
If $\alpha=1$, then inequality \eqref{eqcor55} reduces to \eqref{h25}.
\end{remark}

\begin{remark}
With hypotheses $S_{16}$, $S_{18}$, $S_{19}$ and $S_{20}$,
inequality \eqref{eqcor5} of Corollary~\ref{cor5} gives us that
\begin{multline}
\label{eqcor66}
\int_{0}^{\infty}G^{\alpha-\gamma-1}(t) \bigg(
\int_{0}^{\infty}f(s)s^{\alpha-1}\bigg)^{p-\alpha+1} t^{\alpha-1} dt\\
\leq\Big(\frac{p-\alpha+1}{\alpha-\gamma}\Big)^p
\int_{a}^{\infty}f^p(t)G^{p-\gamma+\alpha-1}(t)\bigg(
\int_{0}^{\infty}f(s)s^{\alpha-1}\bigg)^{1-\alpha} t^{\alpha-1} dt.
\end{multline}
\end{remark}

\begin{remark}
If we set $\alpha=1$, 
then the inequality \eqref{eqcor66} simplifies to \eqref{a6}.
\end{remark}

\begin{remark}
With $S_{16}$, $S_{18}$, $S_{19}$, $S_{20}$, and $S_{22}$, 
\eqref{eqcor5} of Corollary~\ref{cor5} gives inequality
\begin{multline}
\label{eqcor99f}
\int_{0}^{\infty}G^{\alpha-p-1}(t)\bigg(
\int_{0}^{\infty}f(s)s^{\alpha-1}\bigg)^{p-\alpha+1} t^{\alpha-1} dt\\
\leq\Big(\frac{p-\alpha+1}{\alpha-p}\Big)^p\int_{a}^{\infty}f^p(t)
G^{\alpha-1}(t)\bigg(\int_{0}^{\infty}f(s)s^{\alpha-1}\bigg)^{1-\alpha} t^{\alpha-1} dt.
\end{multline}
\end{remark}

\begin{remark}
In the particular case $\alpha=1$, inequality \eqref{eqcor99f} gives us \eqref{a11}.
\end{remark}

\begin{corollary}
\label{cor6}
Choosing $\mathbb{T}=h\mathbb{Z}$ in Theorem~\ref{thm2}, 
we obtain from inequality \eqref{eq2} that
\begin{multline}
\sum_{t=\frac{a}{h}}^{\infty}k(ht)v(ht)w(ht+h)g(ht)
G^{-\gamma}(ht+h)F^p(ht)t^{\alpha-1}\\
\leq\Big(\frac{p+\beta}{1-\gamma+\theta}\Big)^p
\sum_{t=\frac{a}{h}}^{\infty}\frac{k(ht)v(ht)w(ht+h)r^p(ht)f^p(ht)
G^{p-\gamma}(ht+h)}{g^{p-1}(ht)}t^{\alpha-1},
\end{multline}
where
\begin{equation*}
G(t)=h^{\alpha}\sum_{s=\frac{a}{h}}^{\frac{t}{h}-1}g(hs)s^{\alpha-1} 
\quad \text{and} \quad  
F(t)=h^{\alpha}\sum_{s=\frac{t}{h}}^{\infty}r(hs)f(hs)s^{\alpha-1}.
\end{equation*}
\end{corollary}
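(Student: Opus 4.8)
The plan is to obtain this result purely as the $\mathbb{T}=h\mathbb{Z}$ instance of Theorem~\ref{thm2}, so no new analytic idea is needed beyond a faithful transcription of the dynamic inequality into the $h\mathbb{Z}$ setting. First I would observe that $h\mathbb{Z}$ is a time scale, so that $S_1$ holds; hence, once $S_2$, $S_3$, $S_5$, $S_6$, $S_9$, $S_{12}$, $S_{15}$ are assumed, inequality \eqref{eq2} of Theorem~\ref{thm2} is available verbatim on $[a,\infty)_{h\mathbb{Z}}$ (tacitly assuming $a\in h\mathbb{Z}$ so that $a/h$ makes sense).

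Next I would translate the abstract $\alpha$-conformable operators into explicit sums. Using the relation listed in item~(i) of Section~\ref{sec:2}, namely $\int_a^b\eta(t)\Delta_\alpha t=\int_a^b\eta(t)t^{\alpha-1}\Delta t$, each $\Delta_\alpha$-integral in \eqref{eq2} becomes an ordinary $\Delta$-integral carrying the extra weight $t^{\alpha-1}$; then, by \eqref{hz}, every $\Delta$-integral over $[a,\infty)_{h\mathbb{Z}}$ turns into the series $\sum_{t=a/h}^{\infty}h\,(\cdot)(ht)$, and, since $\sigma(t)=t+h$ on $h\mathbb{Z}$, each forward-jump evaluation $G^\sigma(t)$, $F^\sigma(t)$ becomes $G(t+h)$, $F(t+h)$. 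The single factor $h$ coming from the two outer measures cancels across the inequality, leaving the $t^{\alpha-1}$ weights displayed in the statement.

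A bookkeeping step then records the explicit form of the auxiliary functions under this specialization: applying the same $\Delta_\alpha$-to-$\Delta$ conversion inside the definitions in $S_6$ and $S_9$ and summing via \eqref{hz} gives $G(t)=h^{\alpha}\sum_{s=a/h}^{t/h-1}g(hs)s^{\alpha-1}$ and $F(t)=h^{\alpha}\sum_{s=t/h}^{\infty}r(hs)f(hs)s^{\alpha-1}$, where the power $h^{\alpha}$ collects one factor $h$ from the $\Delta$-measure together with $h^{\alpha-1}$ from the weight $(hs)^{\alpha-1}$. Substituting these, collecting the powers of $G^\sigma$, $F$ and $g$, and simplifying the exponents and the constant $\big((p+\beta-\alpha+1)/(\alpha-\gamma+\theta)\big)^p$ into the stated form $\big((p+\beta)/(1-\gamma+\theta)\big)^p$ reproduces the asserted inequality.

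The only genuinely delicate point is the index-and-argument bookkeeping: tracking exactly how many powers of $h$ appear (one per outer $\Delta$-measure, plus $h^{\alpha}$ hidden inside $G$ and $F$), keeping the shift $\sigma(t)=t+h$ consistent on both sides of the inequality, and confirming that the summation range $t=a/h,\dots,\infty$ is preserved after re-indexing. Everything else is a routine substitution into the already-established inequality \eqref{eq2}.
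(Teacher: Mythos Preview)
Your approach is exactly the paper's: the corollary is stated without proof and is meant to be read as the immediate $\mathbb{T}=h\mathbb{Z}$ specialization of Theorem~\ref{thm2}, with the $\Delta_\alpha$-integrals converted to $h$-sums via items~(i) and \eqref{hz} and the forward jump $\sigma(t)=t+h$ inserted. Your bookkeeping for $G$, $F$, the $h^\alpha$ prefactor, and the cancellation of the outer $h$ is all in line with what the paper does implicitly here and in the parallel Corollaries~\ref{cor2}, \ref{cor10}, \ref{cor14}.

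One caveat: the final ``simplification'' you invoke, namely collapsing the constant $\big((p+\beta-\alpha+1)/(\alpha-\gamma+\theta)\big)^p$ to $\big((p+\beta)/(1-\gamma+\theta)\big)^p$ and the exponents $\alpha-\gamma\pm 1$, $p-\alpha+1$, $p-\gamma+\alpha-1$, $1-\alpha$ to $-\gamma$, $p$, $p-\gamma$, $0$, is not an algebraic identity for general $\alpha\in(0,1]$; it only holds when $\alpha=1$. A faithful transcription of \eqref{eq2} to $h\mathbb{Z}$ keeps the $\alpha$-dependent exponents alongside the $t^{\alpha-1}$ weights. The displayed corollary in the paper carries this same inconsistency (as do its siblings for $h\mathbb{Z}$), so the mismatch is in the target statement, not in your method; just do not present that collapse as a valid simplification.
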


\begin{corollary}
\label{cor7}
Putting $h=1$ in Corollary~\ref{cor6}, 
that is, for the discrete time-scale $\mathbb{T}=\mathbb{Z}$,
we obtain from \eqref{eq2} the inequality
\begin{multline}
\label{eqcor7}
\sum_{t=a}^{\infty}k(t)v(t)w(t+1)g(t)G^{-\gamma}(t+1)F^p(t)t^{\alpha-1}\\
\leq\Big(\frac{p+\beta}{1-\gamma+\theta}\Big)^p\sum_{t=a}^{\infty}
\frac{k(t)v(t)w(t+1)r^p(t)f^p(t) G^{p-\gamma}(t+1)}{g^{p-1}(t)}t^{\alpha-1},
\end{multline}
where
\begin{equation*}
G(t)=\sum_{s=a}^{t-1}g(s)s^{\alpha-1} 
\quad \text{and} \quad  
F(t)=\sum_{s=t}^{\infty}r(s)f(s)ss^{\alpha-1}.
\end{equation*}
\end{corollary}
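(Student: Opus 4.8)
The plan is to obtain Corollary~\ref{cor7} as the special case $h=1$ of Corollary~\ref{cor6} (equivalently, as the direct specialization of Theorem~\ref{thm2} to the time scale $\mathbb{T}=\mathbb{Z}$). No new estimate is required: the whole analytic content---the $\alpha$-conformable integration by parts \eqref{parts}, Keller's chain rule \eqref{chain1}, the use of $S_{12}$ and $S_{15}$, and the $\alpha$-conformable H\"{o}lder inequality \eqref{holder}---was already spent in proving Theorem~\ref{thm2} and transported to $h\mathbb{Z}$ in Corollary~\ref{cor6}. What is left is a bookkeeping substitution.

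First I would record the discrete dictionary from \eqref{z}: for $\mathbb{T}=\mathbb{Z}$ one has $\sigma(t)=t+1$, $\mu(t)=1$, and $\int_a^b\eta(t)\,\Delta t=\sum_{t=a}^{b-1}\eta(t)$. Combining this with relation (i), which converts the $\Delta_\alpha$-calculus into the $\Delta$-calculus, gives
\[
\int_a^\infty\eta(t)\,\Delta_\alpha t=\int_a^\infty\eta(t)\,t^{\alpha-1}\,\Delta t=\sum_{t=a}^\infty\eta(t)\,t^{\alpha-1}.
\]
Hence every $\Delta_\alpha$-integral occurring in \eqref{eq2} becomes a series carrying the weight $t^{\alpha-1}$, each $\sigma$-evaluated quantity is read at $t+1$ while each plain evaluation stays at $t$; in particular $G^\sigma(t)=G(t+1)$, and the defining $\Delta_\alpha$-integrals of $G$ and $F$ become $G(t)=\sum_{s=a}^{t-1}g(s)\,s^{\alpha-1}$ and $F(t)=\sum_{s=t}^{\infty}r(s)f(s)\,s^{\alpha-1}$.

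Then I would simply put $h=1$ in the statement of Corollary~\ref{cor6}: the index $t=a/h$ becomes $t=a$, the arguments $ht$ and $ht+h$ become $t$ and $t+1$, the prefactor $h^\alpha$ in the definitions of $G$ and $F$ collapses to $1$, and the constant $\bigl(\tfrac{p+\beta}{1-\gamma+\theta}\bigr)^p$---being independent of $h$---is unchanged. Assembling these substitutions into the $h\mathbb{Z}$-form of \eqref{eq2} yields exactly \eqref{eqcor7}. The only point that needs attention is this bookkeeping---keeping $G(t)$ distinct from $G^\sigma(t)=G(t+1)$ at each occurrence and attaching the weight $t^{\alpha-1}$ to every summand---so there is no real obstacle: Corollary~\ref{cor7} is literally the $h=1$ reading of Corollary~\ref{cor6}.
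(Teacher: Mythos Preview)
Your proposal is correct and matches the paper's approach exactly: the paper gives no separate proof for this corollary, stating it as the direct $h=1$ specialization of Corollary~\ref{cor6}, and your plan is precisely that substitution with the dictionary \eqref{z} spelled out.
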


\begin{remark}
With $S_{16}$, $S_{17}$, $S_{19}$ and $S_{21}$, 
then \eqref{eqcor7} of Corollary~\ref{cor7} reduces to \eqref{h21}.
\end{remark}

\begin{corollary}
\label{cor8:b}
Putting $\mathbb{T}=\overline{q^\mathbb{Z}}$ 
in Theorem~\ref{thm2}, it follows from
\eqref{q^z} that \eqref{eq1} simplifies to
\begin{multline}
\sum_{t=\log_q a}^{\infty}k(q^t)v(q^t)w(q^{t+1})g(q^t)
G^{-\gamma}(q^{t+1})F^p(q^t)q^{\alpha t}\\
\leq\Big(\frac{p+\beta}{1-\gamma+\theta}\Big)^p
\sum_{t=\log_q a}^{\infty}\frac{k(q^t)v(q^t)w(q^{t+1})r^p(q^t)f^p(q^t)
G^{p-\gamma}(q^{t+1})}{g^{p-1}(q^t)}q^{\alpha t}
\end{multline}
with
\begin{equation*}
G(t)=(q-1)\sum_{s=\log_q a}^{(\log_q t)-1}g(q^s)q^{\alpha s} 
\quad \text{and} \quad  
F(t)=(q-1)\sum_{s=\log_q t}^{\infty} r(q^s)f(q^s)q^{\alpha s}.
\end{equation*}
\end{corollary}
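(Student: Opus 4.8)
The plan is to derive Corollary~\ref{cor8:b} purely as a specialization of Theorem~\ref{thm2} to the quantum time scale $\mathbb{T}=\overline{q^{\mathbb{Z}}}$; no new analytic argument is needed, only a translation of the $\alpha$-conformable delta integral appearing in \eqref{eq2} into a $q$-series.

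First I would record the single conversion identity that does all the work. Combining the relation $\int_{a}^{b}\eta(t)\Delta_{\alpha}t=\int_{a}^{b}\eta(t)\,t^{\alpha-1}\Delta t$ with the quantum evaluation $\sigma(t)=qt$, $\mu(t)=(q-1)t$ and $\int_{a}^{b}\eta(t)\Delta t=(q-1)\sum_{t=\log_{q}a}^{\log_{q}b-1}q^{t}\eta(q^{t})$ from \eqref{q^z}, one gets for any admissible $\eta$ on $\overline{q^{\mathbb{Z}}}$ that
\[
\int_{a}^{\infty}\eta(t)\,\Delta_{\alpha}t
=(q-1)\sum_{t=\log_{q}a}^{\infty}q^{t}\,\eta(q^{t})\,(q^{t})^{\alpha-1}
=(q-1)\sum_{t=\log_{q}a}^{\infty}q^{\alpha t}\,\eta(q^{t}).
\]
In particular, applying this to the defining integrals $G(\varsigma)=\int_{a}^{\varsigma}g\,\Delta_{\alpha}\varsigma$ and $F(\varsigma)=\int_{\varsigma}^{\infty}rf\,\Delta_{\alpha}\varsigma$ of $S_{6}$ and $S_{9}$ produces exactly the two finite-difference formulas for $G$ and $F$ quoted in the statement of the corollary.

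Next I would substitute $t=q^{s}$ in both integrals of \eqref{eq2}: every quantity carrying a $\sigma$ is read off at the shifted node $q^{s+1}$ (so $G^{\sigma}(t)$ becomes $G(q^{s+1})$ and $w^{\sigma}(t)$ becomes $w(q^{s+1})$), while $k,v,g,r,f$ and $F$ are read off at $q^{s}$, and the conformable measure contributes the weight $q^{\alpha s}$. The factor $(q-1)$ occurs on both sides of \eqref{eq2} and cancels, and what remains is precisely the asserted inequality, the multiplicative constant and all exponents being inherited verbatim from \eqref{eq2}.

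The argument carries essentially no obstacle: it is entirely bookkeeping. The hard part, such as it is, will be keeping the forward jump straight — on $\overline{q^{\mathbb{Z}}}$ one has $\sigma(q^{s})=q^{s+1}$, not $q^{s}+1$ — and checking that the extra factor $(q^{s})^{\alpha-1}$ produced by $\Delta_{\alpha}$ has been absorbed correctly into the summation weight $q^{\alpha s}$. Once the conversion identity above is in hand, the rest is a line-by-line rewrite of \eqref{eq2}.
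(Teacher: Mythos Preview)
Your proposal is correct and matches the paper's treatment: the paper gives no separate proof for this corollary, presenting it as an immediate specialization of Theorem~\ref{thm2} via the quantum-calculus formulas in \eqref{q^z}, which is exactly the line-by-line rewrite you outline. (Note that the paper's reference to \eqref{eq1} in the corollary statement is a typo for \eqref{eq2}, and the exponents and constant displayed in the corollary do not literally match those of \eqref{eq2}; your plan would reproduce the \eqref{eq2} exponents, which is the mathematically correct outcome.)
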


\begin{theorem}
\label{thm3}
Let $S_1$, $S_2$, $S_3$, $S_5$, $S_7$, $S_8$, $S_{11}$, $S_{15}$ be satisfied. Then,
\begin{multline}
\label{eq3}
\int_{a}^{\infty}k^\sigma(t)v^\sigma(t)w(t)g(t)H^{\alpha-\gamma-1}(t)\big(
K^\sigma(t)\big)^{p-\alpha+1}\Delta_\alpha t\\
\leq\Big(\frac{p-\alpha+\beta+1}{\alpha-\gamma+\theta}\Big)^p
\int_{a}^{\infty}\frac{k^\sigma(t)v^\sigma(t)w(t)r^p(t)f^p(t)
H^{p-\gamma+\alpha-1}(t)\big(K^\sigma\big(t))^{(1-\alpha)}}{g^{p-1}(t)}\Delta_\alpha t.
\end{multline}
\end{theorem}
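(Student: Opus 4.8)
The plan is to follow the proof of Theorem~\ref{thm1} closely, replacing the increasing integral $G$ by the decreasing integral $H$ while keeping $K$ unchanged. First I would apply the $\alpha$-conformable integration by parts formula \eqref{parts} to the left-hand side of \eqref{eq3}, taking
\[
T_\alpha^\Delta u(t)=w(t)g(t)H^{\alpha-\gamma-1}(t)
\qquad\text{and}\qquad
z^\sigma(t)=k^\sigma(t)v^\sigma(t)\big(K^\sigma(t)\big)^{p-\alpha+1},
\]
so that the relevant antiderivative is $u(t)=-\int_t^\infty w(s)g(s)H^{\alpha-\gamma-1}(s)\Delta_\alpha s$; this tail integral converges because $S_5$ forces the exponent $\alpha-\gamma-1$ to be nonpositive while $H(t)\to 0$ as $t\to\infty$. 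The boundary term $\big[u(t)k(t)v(t)K^{p-\alpha+1}(t)\big]_a^\infty$ vanishes, since $K(a)=0$ kills it at $a$ and $u(\infty)=0$ kills it at $\infty$, whence the left-hand side of \eqref{eq3} equals $\int_a^\infty\big(-u(t)\big)\,T_\alpha^\Delta\!\big(k(t)v(t)K^{p-\alpha+1}(t)\big)\Delta_\alpha t$.

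Second, I would estimate $-u(t)$ from above. Combining the product rule \eqref{product}, Keller's chain rule \eqref{chain1} applied to $s\mapsto H^{\alpha-\gamma}(s)$ (note $T_\alpha^\Delta H=-g\le0$), hypothesis $S_{11}$, and the monotonicity of $H$ (so that, $\alpha-\gamma-1$ being nonpositive, replacing $H(s)$ by $H^\sigma(s)$ or by $H(c)$ with $c\in[s,\sigma(s)]$ does not decrease $H^{\alpha-\gamma-1}$), I expect to get
\[
T_\alpha^\Delta\!\big(w(s)H^{\alpha-\gamma}(s)\big)\le-(\alpha-\gamma+\theta)\,w(s)g(s)H^{\alpha-\gamma-1}(s),
\]
that is, $w(s)g(s)H^{\alpha-\gamma-1}(s)\le\frac{1}{\alpha-\gamma+\theta}\big(-T_\alpha^\Delta(wH^{\alpha-\gamma})(s)\big)$, where $\alpha-\gamma+\theta>0$ by $S_3$ and $S_5$. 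Integrating from $t$ to $\infty$ and using $H(\infty)=0$ together with $\alpha-\gamma>0$ then yields $-u(t)\le\frac{1}{\alpha-\gamma+\theta}\,w(t)H^{\alpha-\gamma}(t)$.

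Third, the estimate for $T_\alpha^\Delta\!\big(k(t)v(t)K^{p-\alpha+1}(t)\big)$ is verbatim the one in the proof of Theorem~\ref{thm1}: expanding by \eqref{product} and \eqref{chain1}, dropping the $T_\alpha^\Delta k$ term by monotonicity of $k$, controlling the $T_\alpha^\Delta v$ term through $S_{14}$, and using $T_\alpha^\Delta K=rf\ge0$, $K\le K^\sigma$, and $p\ge1$, gives $T_\alpha^\Delta\!\big(k(t)v(t)K^{p-\alpha+1}(t)\big)\le(p-\alpha+\beta+1)\,k^\sigma(t)v^\sigma(t)r(t)f(t)\big(K^\sigma(t)\big)^{p-\alpha}$. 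Substituting both estimates bounds the left-hand side of \eqref{eq3} by $\frac{p-\alpha+\beta+1}{\alpha-\gamma+\theta}\int_a^\infty k^\sigma v^\sigma w\,r f\,H^{\alpha-\gamma}(K^\sigma)^{p-\alpha}\Delta_\alpha t$. Finally I would apply the $\alpha$-conformable H\"older inequality \eqref{holder} with exponents $p$ and $p/(p-1)$, splitting this last integrand into a product of one factor whose $p/(p-1)$-th power is exactly the integrand on the left of \eqref{eq3} and one factor whose $p$-th power is exactly the integrand on the right of \eqref{eq3} (the $g$-powers cancel, and the exponents of $H$ and of $K^\sigma$ come out to $p-\gamma+\alpha-1$ and $1-\alpha$); dividing both sides by the resulting common power of the left-hand side and raising to the power $p$ then gives \eqref{eq3}.

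The main obstacle is the second step. Since $H$ is a \emph{decreasing} integral and $\alpha-\gamma-1$ is negative, every monotonicity comparison is reversed relative to the $G$-case of Theorem~\ref{thm1}, and one must keep careful track of where $H$ may be replaced by $H^\sigma$ in order to land on the constant $\alpha-\gamma+\theta$, with the correct sign, rather than on $\gamma-\theta-\alpha$. One must also verify the convergence and boundary facts $u(\infty)=0$ and $\lim_{s\to\infty}w(s)H^{\alpha-\gamma}(s)=0$, which implicitly require $\int_a^\infty g(s)\Delta_\alpha s<\infty$ (so that $H$ is finite and vanishes at infinity); the rest, in particular the H\"older bookkeeping, is routine once the exponents are matched as in Theorem~\ref{thm1}.
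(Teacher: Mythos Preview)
Your proposal is correct and follows essentially the same approach as the paper's own proof: the same integration by parts with the same choice of $u$ and $z$, the same estimate $-u(t)\le\frac{1}{\alpha-\gamma+\theta}w(t)H^{\alpha-\gamma}(t)$ via $S_{11}$ and the chain/product rules, the same bound on $T_\alpha^\Delta\big(k v K^{p-\alpha+1}\big)$, and the same final H\"older step. Note that, like the paper's proof, you actually invoke $S_{14}$ (the condition relating $v$ and $K$) in the third step, whereas the theorem as stated lists $S_{15}$; this appears to be a typo in the statement, and your use of $S_{14}$ is the correct one.
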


\begin{proof}
Using the $\alpha$-conformable integration by parts formula 
on time scales  \eqref{parts} with
\begin{equation*}
T_\alpha^\Delta u(t)=w(t)g(t)H^{\alpha-\gamma-1}(t) 
\qquad \text{and} \qquad z^\sigma(t)=k^\sigma(t)
v^\sigma(t)\big(K^\sigma(t)\big)^{p-\alpha+1},
\end{equation*}
we have
\begin{multline}
\label{111}
\int_{a}^{\infty}k^\sigma(t)v^\sigma(t)w(t)g(t)
H^{\alpha-\gamma-1}(t)\big(K^\sigma(t)\big)^{p-\alpha+1}\Delta_\alpha t\\
=\Big[u(t)k(t)v(t)K^{p-\alpha+1}(t)\Big]_a^\infty
+\int_{a}^{\infty}\big(-u(t)\big) T_\alpha^\Delta \Big(k(t)v(t)
K^{p-\alpha+1}(t)\Big) \Delta_\alpha t,
\end{multline}
where
\begin{equation*}
u(t)=-\int_{t}^{\infty}w(s)g(s)H^{\alpha-\gamma-1}(s)\Delta_\alpha s.
\end{equation*}
From  \eqref{chain1}, \eqref{product}, and $S_{11}$, then
\begin{eqnarray*}
T_\alpha^\Delta \Big(-w(s)H^{\alpha-\gamma}(s)\Big)
&=&-\Big( T_\alpha^\Delta w(s)\big(H^\sigma(s)\big)^{\alpha-\gamma}
+w(s) T_\alpha^\Delta \big(H^{\alpha-\gamma}(s)\big)\Big)\\
&\geq&-\Big(\theta w(s)\big(H^\sigma(s)\big)^{\alpha-\gamma-1} 
T_\alpha^\Delta  H(s)+(\alpha-\gamma)w(s)H^{\alpha-\gamma-1}(c) T_\alpha^\Delta H(s)\Big).
\end{eqnarray*}
Since $s\leq c$ , $0\leq T_\alpha^\Delta H(s)=-g(s)$ and $\alpha>\gamma>0$, we get
\begin{eqnarray*}
T_\alpha^\Delta  \Big(-w(s)H^{\alpha-\gamma}(s)\Big)
&\geq&\theta w(s)g(s)H^{\alpha-\gamma-1}(s)
+(\alpha-\gamma)w(s)g(s)H^{\alpha-\gamma-1}(s)\\
&=&(\alpha-\gamma+\theta)w(s)g(s)H^{\alpha-\gamma-1}(s).
\end{eqnarray*}
Thus,
\begin{equation*}
w(s)g(s)H^{\alpha-\gamma-1}(s)\leq\frac{1}{\alpha-\gamma+\theta} 
T_\alpha^\Delta \Big(-w(s)H^{\alpha-\gamma}(s)\Big).
\end{equation*}
Hence,
\begin{equation}
\label{222}
\begin{split}
-u(t)=\int_{t}^{\infty}w(s)g(s)H^{\alpha-\gamma-1}(s)\Delta_\alpha s
&\leq \frac{1}{\alpha-\gamma+\theta}\int_{t}^{\infty} T_\alpha^\Delta  
\Big(-w(s)H^{\alpha-\gamma}(s)\Big)\Delta_\alpha s\\
&=\frac{1}{\alpha-\gamma+\theta}w(t)H^{\alpha-\gamma}(t).
\end{split}
\end{equation}
Using \eqref{product} and \eqref{chain1}, one obtains
\begin{multline*}
T_\alpha^\Delta \Big(k(t)v(t)K^{p-\alpha+1}(t)\Big)
= T_\alpha^\Delta \big(k(t)v(t)\big) K^{p-\alpha+1}(t)
+k^\sigma(t)v^\sigma(t)  T_\alpha^\Delta \big(K^{p-\alpha+1}(t)\big)\\
= T_\alpha^\Delta k(t)v(t)K^{p-\alpha+1}(t)+k^\sigma(t) T_\alpha^\Delta
v(t)K^{p-\alpha+1}(t)+(p-\alpha+1)k^\sigma(t)v^\sigma(t)K^{p-\alpha}(c)  
T_\alpha^\Delta K(t)
\end{multline*}
with $c\in[t,\sigma(t)]$. Considering $r(t)f(t)=T_\alpha^\Delta K(t)\geq0$, 
$0\leq T_\alpha^\Delta k(t)$,  $c\leq\sigma(t)$, $1\leq p\geq$ and $S_{14}$, 
we arrive to
\begin{equation}
\label{444}
\begin{split}
T_\alpha^\Delta \Big(k(t)v(t)K^p(t)\Big)
&\leq\beta k^\sigma(t)v^\sigma(t)r(t)f(t)K^{p-\alpha}(t)
+(p-\alpha+1)k^\sigma(t)v^\sigma(t)r(t)f(t)\big(
K^\sigma(t)\big)^{p-\alpha}\\
&\leq(p-\alpha+\beta+1)k^\sigma(t)v^\sigma(t)r(t)f(t)\big(
K^\sigma(t)\big)^{p-\alpha}.
\end{split}
\end{equation}
Now, combining \eqref{111}, \eqref{222} and \eqref{444}, 
we get ($K(a)=0$ and $u(\infty)=0$)
\begin{multline}
\label{cann}
\int_{a}^{\infty}k^\sigma(t)v^\sigma(t)w(t)g(t)
H^{\alpha-\gamma-1}(t)\big(K^\sigma(t)\big)^{p-\alpha+1}\Delta_\alpha t\\
\leq\frac{p-\alpha+\beta+1}{\alpha-\gamma+\theta}
\int_{a}^{\infty}k^\sigma(t)v^\sigma(t)w(t)r(t)f(t)
H^{\alpha-\gamma}(t)\big(K^\sigma(t)\big)^{p-\alpha}\Delta_\alpha t.
\end{multline}
Inequality \eqref{cann} becomes
\begin{eqnarray*}
&&\int_{a}^{\infty}k^\sigma(t)v^\sigma(t)w(t)g(t)
H^{\alpha-\gamma+1}(t)\big(K^\sigma(t)\big)^{p-\alpha+1}\Delta_\alpha t\\
&&\leq\frac{p-\alpha+\beta+1}{\alpha-\gamma+\theta}
\int_{a}^{\infty}\bigg(\big(k^\sigma(t)v^\sigma(t)w(t)g(t)\big)^{(p-1)/p}
H^{(\alpha-\gamma-1)(p-1)/p}(t)\big(K^\sigma\big(t))^{(p-1)(p-\alpha+1)/p}\bigg)\\
&&\quad\times\bigg(\frac{\big(k^\sigma(t)v^\sigma(t)w(t)\big)^{1/p}r(t)
f(t)H^{(p-\gamma+\alpha-1)/p}(t)\big(K^\sigma\big(t))^{(1-\alpha)/p} }{g^{(p-1)/p}(t)}
\bigg)\Delta_\alpha t.
\end{eqnarray*}
Using the The $\alpha$-conformable H\"{o}lder inequality \eqref{holder} 
with $p$ and $p/(p-1)$ indices, we obtain that
\begin{eqnarray*}
&&\int_{a}^{\infty}k^\sigma(t)v^\sigma(t)w(t)g(t)
H^{\alpha-\gamma-1}(t)\big(K^\sigma(t)\big)^{p-\alpha+1}\Delta_\alpha t\\
&&\leq\frac{p-\alpha+\beta+1}{\alpha-\gamma+\theta}
\bigg(\int_{a}^{\infty}k^\sigma(t)v^\sigma(t)w(t)g(t)
H^{\alpha-\gamma-1}(t)\big(K^\sigma\big(t)\big)^{p-\alpha+1}\Delta_\alpha t\bigg)^{(p-1)/p}\\
&&\quad\times\bigg(\int_{a}^{\infty}\frac{k^\sigma(t)v^\sigma(t)w(t)r^p(t)
f^p(t)H^{p-\gamma+\alpha-1}(t)\big(K^\sigma\big(t))^{(1-\alpha)}}{g^{p-1}(t)}
\Delta_\alpha t\bigg)^{1/p}.
\end{eqnarray*}
This implies that
\begin{eqnarray*}
&&\int_{a}^{\infty}k^\sigma(t)v^\sigma(t)w(t)g(t)
H^{\alpha-\gamma-1}(t)\big(K^\sigma(t)\big)^{p-\alpha+1}\Delta_\alpha t\\
&&\leq\Big(\frac{p-\alpha+\beta+1}{\alpha-\gamma+\theta}\Big)^p
\int_{a}^{\infty}\frac{k^\sigma(t)v^\sigma(t)w(t)r^p(t)f^p(t)
H^{p-\gamma+\alpha-1}(t)\big(K^\sigma\big(t))^{(1-\alpha)}}{g^{p-1}(t)}\Delta_\alpha t,
\end{eqnarray*}
which is the desired result. 
\end{proof}	

\begin{corollary}
Putting $\alpha=1$ in Theorem~\ref{thm3}, then inequality \eqref{eq3} reduces to
\begin{eqnarray*}
&&\int_{a}^{\infty}k^\sigma(t)v^\sigma(t)w(t)
g(t)H^{-\gamma}(t)\big(K^\sigma(t)\big)^p\Delta t\\
&&\leq\Big(\frac{p+\beta}{1-\gamma+\theta}\Big)^p
\int_{a}^{\infty}\frac{k^\sigma(t)v^\sigma(t)
w(t)r^p(t)f^p(t)H^{p-\gamma}(t)}{g^{p-1}(t)}\Delta t
\end{eqnarray*}
where
\begin{equation*}
H(t)=\int_{t}^{\infty}g(s)\Delta s 
\quad \text{and} \quad 
K(t)=\int_{a}^{t}r(s)f(s)\Delta s, \quad t\in[a,\infty)_\mathbb{T},
\end{equation*}
which is Theorem~3.21 of \cite{dd1}.
\end{corollary}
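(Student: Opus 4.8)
The plan is to obtain this corollary purely as the $\alpha=1$ specialization of the already-established inequality \eqref{eq3}, so that no fresh estimation is needed: every step is a substitution followed by a simplification of exponents. First I would invoke Theorem~\ref{thm3} under its standing hypotheses $S_1$, $S_2$, $S_3$, $S_5$, $S_7$, $S_8$, $S_{11}$, $S_{15}$ and set $\alpha=1$ throughout \eqref{eq3}. The decisive observation is the pair of conversion identities recorded in item~(i) of the preliminaries, namely $(\eta)^{\Delta_\alpha}(t)=(\eta)^\Delta(t)\,t^{1-\alpha}$ and $\int_a^b\eta(t)\,\Delta_\alpha t=\int_a^b\eta(t)\,t^{\alpha-1}\,\Delta t$. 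At $\alpha=1$ the weight $t^{\alpha-1}=t^{0}=1$, so $T_1^\Delta=(\cdot)^\Delta$ collapses to the ordinary delta derivative and $\Delta_1 t=\Delta t$; in particular both integrals in \eqref{eq3} revert to plain time-scale integrals with no residual power of $t$.

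Next I would reduce the exponents and the multiplicative constant appearing in \eqref{eq3}. Putting $\alpha=1$ yields $\alpha-\gamma-1=-\gamma$ in the left-hand integrand, $p-\alpha+1=p$ in the exponent of $K^\sigma$, $p-\gamma+\alpha-1=p-\gamma$ in the right-hand integrand, and, crucially, $1-\alpha=0$, so the factor $\big(K^\sigma(t)\big)^{1-\alpha}$ becomes unity and drops out. The constant $\big((p-\alpha+\beta+1)/(\alpha-\gamma+\theta)\big)^p$ simplifies to $\big((p+\beta)/(1-\gamma+\theta)\big)^p$. Assembling these simplifications reproduces verbatim the displayed inequality of the corollary.

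Finally I would check that the auxiliary data degenerate consistently. The definitions $S_7$ and $S_8$ become $H(t)=\int_t^\infty g(s)\,\Delta s$ and $K(t)=\int_a^t r(s)f(s)\,\Delta s$, while the structural conditions $S_5$, $S_{11}$ and $S_{15}$ (equivalently $S_{14}$, as actually used in the proof of Theorem~\ref{thm3}) reduce at $\alpha=1$ to $0\le\gamma<1$ together with the $\Delta$-derivative growth bounds $w^\Delta/w\le\theta\,H^\Delta/H^\sigma$ and $v^\Delta/v^\sigma\le\beta\,K^\Delta/K$. These are exactly the hypotheses and the conclusion of Theorem~3.21 of \cite{dd1}, which completes the identification. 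Since the whole argument is a direct substitution, there is no substantive obstacle; the only point requiring care is confirming that the $\Delta_\alpha$-integral genuinely collapses to the $\Delta$-integral at $\alpha=1$ with no leftover $t^{\alpha-1}$ weight, which is guaranteed by the conversion identity stated above.
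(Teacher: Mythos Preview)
Your proposal is correct and matches the paper's approach: the corollary is stated without a separate proof, being an immediate specialization of Theorem~\ref{thm3} at $\alpha=1$, and your careful verification of how each exponent, the constant, the integrals $\Delta_\alpha t\to\Delta t$, and the definitions of $H$ and $K$ collapse is exactly the intended (trivial) argument. Your side remark that the proof of Theorem~\ref{thm3} actually invokes $S_{14}$ rather than the listed $S_{15}$ is also accurate.
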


\begin{remark}
Under $S_{16}$, $S_{17}$ and $S_{19}$,
\eqref{eq3} of Theorem~\ref{thm3} tell us that
\begin{multline} 
\label{306}
\int_{a}^{\infty} g(t)H^{\alpha-\gamma-1}(t)\big(
K^\sigma(t)\big)^{p-\alpha+1}\Delta_\alpha t\\
\leq\Big(\frac{p-\alpha+1}{\alpha-\gamma}\Big)^p\int_{a}^{\infty}
g(t)f^p(t)H^{p-\gamma+\alpha-1}(t)\big(
K^\sigma\big(t))^{(1-\alpha)}\Delta_\alpha t.
\end{multline}
\end{remark}

\begin{remark}
For $\alpha=1$, inequality \eqref{306} gives us \eqref{saker3}.
\end{remark}

Now, as special cases of our results, we obtain continuous, 
discrete and quantum $\alpha$-conformable inequalities. 
For that, we fix the time scale as $\mathbb{T}=\mathbb{R}$, 
$\mathbb{T}=h\mathbb{Z}$, $\mathbb{T}=\mathbb{Z}$, 
or $\mathbb{T}=\overline{q^\mathbb{Z}}$.

\begin{corollary}
\label{cor9}
Putting  $\mathbb{T}=\mathbb{R}$ in Theorem~\ref{thm3},  
it follows from \eqref{r} and \eqref{eq3} that
\begin{multline}
\label{eqcor9}
\int_{a}^{\infty}k(t)v(t)w(t)g(t)
H^{\alpha-\gamma-1}(t)K^{p-\alpha+1}(t)t^{\alpha-1} dt\\
\leq\Big(\frac{p+\beta-\alpha+1}{\alpha-\gamma+\theta}\Big)^p
\int_{a}^{\infty}\frac{k(t)v(t)w(t)r^p(t)
f^p(t)H^{p-\gamma+\alpha-1}(t)K^{1-\alpha}(t)}{g^{p-1}(t)}t^{\alpha-1} dt,
\end{multline}
where
\begin{equation*}
H(t)=\int_{t}^{\infty}g(s) s^{\alpha-1}ds 
\quad \text{and} \quad  
K(t)=\int_{a}^{t}r(s)f(s)s^{\alpha-1} ds.
\end{equation*}
\end{corollary}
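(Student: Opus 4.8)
The plan is to derive \eqref{eqcor9} as a direct specialization of Theorem~\ref{thm3} to the time scale $\mathbb{T}=\mathbb{R}$, so that no new analytic argument is required; the work consists solely in translating the dynamic formulation into its classical counterpart. First I would check that the standing hypotheses $S_1$, $S_2$, $S_3$, $S_5$, $S_7$, $S_8$, $S_{11}$, $S_{15}$ of Theorem~\ref{thm3} remain meaningful when $\mathbb{T}=\mathbb{R}$: every point of $\mathbb{R}$ is dense, so $\sigma(t)=t$ and $\mu(t)=0$ by \eqref{r}, whence $f^\sigma=f$ for any $f$; moreover the $\alpha$-conformable $\Delta$-derivative reduces to $T_\alpha^\Delta\eta(t)=\eta'(t)\,t^{1-\alpha}$ by item~(i) of the preliminaries, so the structural conditions $S_{11}$ and $S_{15}$ become ordinary (differential) inequalities in the classical sense.

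Next I would rewrite the conclusion \eqref{eq3} of Theorem~\ref{thm3} under these identifications. Replacing every forward-jump evaluation $(\cdot)^\sigma$ by the plain evaluation collapses $\big(K^\sigma(t)\big)$, $k^\sigma$, $v^\sigma$ to $K(t)$, $k$, $v$, while the constant $\big(\tfrac{p-\alpha+\beta+1}{\alpha-\gamma+\theta}\big)^p$ (equivalently $\big(\tfrac{p+\beta-\alpha+1}{\alpha-\gamma+\theta}\big)^p$) is untouched. Then I would convert each $\Delta_\alpha$-integral into an ordinary Riemann integral through the relation $\int \eta(t)\,\Delta_\alpha t=\int \eta(t)\,t^{\alpha-1}\,\Delta t=\int \eta(t)\,t^{\alpha-1}\,dt$ coming from item~(i) and \eqref{r}; this attaches precisely one factor $t^{\alpha-1}$ to the integrand on each side, yielding \eqref{eqcor9}. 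The same substitution applied to $S_7$ and $S_8$ turns the definitions of $H$ and $K$ into $H(t)=\int_t^\infty g(s)\,s^{\alpha-1}\,ds$ and $K(t)=\int_a^t r(s)f(s)\,s^{\alpha-1}\,ds$, matching the statement.

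The only step demanding any care is the bookkeeping of exponents and, in particular, ensuring that the single extra weight $t^{\alpha-1}$ produced by the $\Delta_\alpha$-to-$dt$ conversion is attached to the outer integrand and not confused with the internal $s^{\alpha-1}$ weights already carried inside $H$ and $K$ (which arise from their own defining $\Delta_\alpha$-integrals). Beyond this routine verification there is no genuine obstacle: once the dictionary \eqref{r} together with item~(i) is applied term by term, \eqref{eqcor9} is immediate from \eqref{eq3}.
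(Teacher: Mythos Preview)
Your proposal is correct and matches the paper's approach exactly: the corollary is obtained by direct specialization of Theorem~\ref{thm3} to $\mathbb{T}=\mathbb{R}$ via the dictionary \eqref{r} and item~(i), with no additional argument. The paper does not even supply a separate proof, so your careful bookkeeping of the $\sigma$-collapse and the $t^{\alpha-1}$ weights is, if anything, more detailed than what the authors give.
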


\begin{remark}
Under $S_{16}$, $S_{17}$, $S_{19}$, and $S_{20}$,
inequality \eqref{eqcor9} of Corollary~\ref{cor9} gives us
\begin{multline}
\label{eqcor99}
\int_{0}^{\infty}g(t)H^{\alpha-\gamma-1}(t)K^{p-\alpha+1}(t)t^{\alpha-1} dt\\
\leq\Big(\frac{p-\alpha+1}{\alpha-\gamma}\Big)^p
\int_{0}^{\infty}g(t)f^p(t)H^{p-\gamma+\alpha-1}(t)K^{1-\alpha}(t)t^{\alpha-1} dt.
\end{multline}
\end{remark}

\begin{remark}
If we take $\alpha=1$, then inequality \eqref{eqcor99} reduces to
\begin{equation}
\int_{0}^{\infty}\frac{r(t)\Big(\int_{0}^{t}g(s)f(s)ds\Big)^p}{\Big(
\int_{t}^{\infty}g(s)ds\Big)^\gamma}dt
\leq\Big(\frac{p}{1-\gamma}\Big)^p
\int_{0}^{\infty}g(t)f^p(t)\Big(\int_{t}^{\infty}g(s)ds\Big)^{p-\gamma}dt,
\end{equation}
that is, we get the continuous analog of \eqref{h30}.
\end{remark}

\begin{corollary}
\label{cor10}
Putting $\mathbb{T}=h\mathbb{Z}$ in Theorem \ref{thm3}, then inequality \eqref{eq3} gives
\begin{multline}
\sum_{t=\frac{a}{h}}^{\infty}k(ht+h)v(ht+h)w(ht)g(ht)H^{-\gamma}(ht)K^p(ht+h)t^{\alpha-1}\\
\leq\Big(\frac{p+\beta}{1-\gamma+\theta}\Big)^p\sum_{t=\frac{a}{h}}^{\infty}\frac{k(ht+h)v(ht+h)
w(ht)r^p(ht)f^p(ht)H^{p-\gamma}(ht)}
{g^{p-1}(ht)}t^{\alpha-1},
\end{multline}
where
\begin{equation*}
H(t)=h^{\alpha}\sum_{s=\frac{t}{h}}^{\infty}g(hs)s^{\alpha-1} 
\quad \text{and} \quad  
K(t)=h^{\alpha}\sum_{s=\frac{a}{h}}^{\frac{t}{h}-1}r(hs)f(hs)s^{\alpha-1}.
\end{equation*}
\end{corollary}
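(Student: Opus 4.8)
The plan is to derive Corollary~\ref{cor10} as a direct specialization of Theorem~\ref{thm3} to the time scale $\mathbb{T}=h\mathbb{Z}$ with $h>0$. Since the hypotheses $S_1$, $S_2$, $S_3$, $S_5$, $S_7$, $S_8$, $S_{11}$, $S_{15}$ required by Theorem~\ref{thm3} make sense on an arbitrary time scale, inequality \eqref{eq3} is already available on $h\mathbb{Z}$; what remains is purely computational, namely to rewrite every ingredient of \eqref{eq3} in the concrete language of $h\mathbb{Z}$.

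First I would convert each $\alpha$-conformable integral appearing in \eqref{eq3} into an ordinary sum. Using relation (i) of Section~\ref{sec:2}, $\int_{a}^{\infty}\phi(t)\Delta_\alpha t=\int_{a}^{\infty}\phi(t)t^{\alpha-1}\Delta t$, and then the $h\mathbb{Z}$ formulas \eqref{hz}, $\sigma(t)=t+h$ together with $\int_{a}^{\infty}\psi(t)\Delta t=\sum_{t=a/h}^{\infty}h\,\psi(ht)$, each side of \eqref{eq3} becomes a series of the form $\sum_{t=a/h}^{\infty}h\,[\text{integrand at }ht]\,(ht)^{\alpha-1}$. Writing $(ht)^{\alpha-1}=h^{\alpha-1}t^{\alpha-1}$ exposes a common factor $h^{\alpha}$ on both sides, which cancels; and the jump operator turns $k^\sigma(ht)$, $v^\sigma(ht)$ into $k(ht+h)$, $v(ht+h)$ while leaving $w$, $g$, $r$, $f$ evaluated at $ht$, matching the placement of $\sigma$ in \eqref{eq3}.

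Next I would identify $H$ and $K$. The same device gives $H(t)=\int_{t}^{\infty}g(s)\Delta_\alpha s=\sum_{s=t/h}^{\infty}h\,g(hs)(hs)^{\alpha-1}=h^{\alpha}\sum_{s=t/h}^{\infty}g(hs)s^{\alpha-1}$ and, likewise, $K(t)=h^{\alpha}\sum_{s=a/h}^{t/h-1}r(hs)f(hs)s^{\alpha-1}$, which are exactly the expressions displayed in the statement. Substituting these, together with $H^\sigma$ and $K^\sigma$ (again via $\sigma(t)=t+h$), into the two series and collecting powers of $h$ yields the asserted inequality, the constant $\big(\tfrac{p+\beta}{1-\gamma+\theta}\big)^p$ being carried over unchanged from \eqref{eq3}.

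The only genuine point of care---and the place where a careless computation goes wrong---is the bookkeeping of the powers of $h$: one must keep track of the factor $h$ from the outer summation, the factor $h^{\alpha-1}$ from the weight $t^{\alpha-1}$, and the factor $h^{\alpha}$ sitting inside each copy of $H$ and $K$, and verify that the totals on the two sides agree so that they cancel; one must also be scrupulous about which factors receive the $\sigma$-shift. There is no analytic obstacle: the corollary contains no new inequality, only a translation of Theorem~\ref{thm3} through the identities \eqref{hz}.
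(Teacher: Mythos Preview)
Your approach is correct and is exactly the paper's: the corollary is stated there without proof, as an immediate specialization of Theorem~\ref{thm3} via the $h\mathbb{Z}$ identities \eqref{hz}, and you simply spell out that translation in more detail.
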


\begin{corollary}
\label{cor11}
Putting $h=1$ in Corollary~\ref{cor10}, that is, 
fixing the time scale to be $\mathbb{T}=\mathbb{Z}$, 
then one obtains that
\begin{multline}
\label{eqcor11}
\sum_{t=a}^{\infty}k(t+1)v(t+1)w(t)g(t)H^{-\gamma}(t)K^p(t+1)t^{\alpha-1}\\
\leq\Big(\frac{p+\beta}{1-\alpha+\theta}\Big)^p
\sum_{t=a}^{\infty}\frac{k(t+1)v(t+1)w(t)r^p(t)f^p(t)H^{p-\gamma}(t)}
{g^{p-1}(t)}t^{\alpha-1},
\end{multline}
where
\begin{equation*}
H(t)=\sum_{s=t}^{\infty}g(s)s^{\alpha-1} 
\quad \text{and} \quad  
K(t)=\sum_{s=a}^{t-1}r(s)f(s)s^{\alpha-1}.
\end{equation*}
\end{corollary}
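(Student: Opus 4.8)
The plan is to obtain \eqref{eqcor11} by the elementary substitution $h=1$ in Corollary~\ref{cor10}, which is already the instance $\mathbb{T}=h\mathbb{Z}$ of Theorem~\ref{thm3}; consequently no new analytic argument is required, and the task is purely one of simplifying the indices and weights that appear in Corollary~\ref{cor10}.

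First I would set $h=1$ throughout the statement of Corollary~\ref{cor10}. This turns the shifted arguments $ht+h$ into $t+1$ and the unshifted arguments $ht$ into $t$; the lower summation index $\frac{a}{h}$ becomes $a$; and, since $h^{\alpha}=1$, the factors $h^{\alpha}$ in front of the defining sums of $H$ and $K$ disappear while the summation limits $\frac{t}{h}$ and $\frac{a}{h}$ collapse to $t$ and $a$. Hence the auxiliary functions reduce to $H(t)=\sum_{s=t}^{\infty}g(s)s^{\alpha-1}$ and $K(t)=\sum_{s=a}^{t-1}r(s)f(s)s^{\alpha-1}$, which are exactly the expressions displayed in the statement, and the multiplicative constant, which does not depend on $h$, is carried over unchanged.

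For completeness one may also see the result directly from Theorem~\ref{thm3}: for $\mathbb{T}=\mathbb{Z}$ the relations \eqref{z} give $\sigma(t)=t+1$, $\mu(t)=1$ and $\int_{a}^{b}\eta(t)\Delta t=\sum_{t=a}^{b-1}\eta(t)$, and combining this with the identity $\int_{a}^{b}\eta(t)\Delta_{\alpha}t=\int_{a}^{b}\eta(t)t^{\alpha-1}\Delta t$ recalled in item (i) of Section~\ref{sec:2} yields $\int_{a}^{\infty}\eta(t)\Delta_{\alpha}t=\sum_{t=a}^{\infty}\eta(t)t^{\alpha-1}$ whenever the series converges. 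Substituting these into both sides of \eqref{eq3} — replacing every $\varphi^{\sigma}(t)$ by $\varphi(t+1)$ and every $\Delta_{\alpha}$-integral by the weighted sum above — produces \eqref{eqcor11}.

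I do not anticipate any real difficulty; the argument is a bookkeeping specialization. The only points deserving attention are the shift in the summation arguments coming from the forward jump operator $\sigma(t)=t+1$ (so that quantities evaluated at $\sigma(t)$ must be written with argument $t+1$, not $t$) and the tacit convergence of the series on both sides, which is ensured by the rd-continuity and nonnegativity assumptions $S_1$, $S_2$ and by $S_5$, i.e. $0\le\gamma<\alpha$, under which Theorem~\ref{thm3} holds.
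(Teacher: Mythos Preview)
Your proposal is correct and follows exactly the paper's approach: the corollary is stated (and proved) simply by putting $h=1$ in Corollary~\ref{cor10}, so nothing beyond the index and weight bookkeeping you describe is required. Your remark that the multiplicative constant is independent of $h$ and hence carried over unchanged is the right reasoning; the discrepancy between the denominator $1-\gamma+\theta$ in Corollary~\ref{cor10} and $1-\alpha+\theta$ in the displayed inequality \eqref{eqcor11} is a typographical slip in the paper, not a flaw in your argument.
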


\begin{remark}
Under assumptions $S_{16}$, $S_{17}$, $S_{19}$ and $S_{21}$, 
inequality \eqref{eqcor11} of Corollary~\ref{cor11} gives us 
\eqref{h30}.
\end{remark}

\begin{corollary}
\label{cor12}
Choosing $\mathbb{T}=\overline{q^\mathbb{Z}}$ in Theorem~\ref{thm3}, 
it follows from \eqref{q^z} and \eqref{eq3} that
\begin{multline}
\sum_{t=\log_q a}^{\infty}k(q^{t+1})v(q^{t+1})
w(q^t)g(q^t)H^{-\gamma}(q^t)K^p(q^{t+1})q^{\alpha t}\\
\leq\Big(\frac{p+\beta}{1-\gamma+\theta}\Big)^p\sum_{t=\log_q a}^{\infty}
\frac{k(q^{t+1})v(q^{t+1})w(q^t)r^p(q^t)f^p(q^t)
H^{p-\gamma}(q^t)}{g^{p-1}(q^t)}q^{\alpha t},
\end{multline}
where
\begin{equation*}
H(t)=(q-1)\sum_{s=\log_q t}^{\infty} g(q^s)q^{\alpha s} 
\quad \text{and} \quad  
K(t)=(q-1)\sum_{s=\log_q a}^{(\log_q t)-1} r(q^s)f(q^s)q^{\alpha s}.
\end{equation*}
\end{corollary}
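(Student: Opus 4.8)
The plan is to obtain Corollary~\ref{cor12} as an immediate specialization of Theorem~\ref{thm3}: I would fix the time scale in inequality~\eqref{eq3} to be the quantum scale $\mathbb{T}=\overline{q^{\mathbb{Z}}}$ and transcribe every term appearing there by means of the explicit formulas collected in \eqref{q^z}. Since the hypotheses $S_1$, $S_2$, $S_3$, $S_5$, $S_7$, $S_8$, $S_{11}$, $S_{15}$ are already assumed, nothing concerning applicability needs to be checked; the whole argument is a change of notation, passing from the abstract time-scale language to that of $q$-calculus.

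Concretely, I would first recall that on $\mathbb{T}=\overline{q^{\mathbb{Z}}}$ one has $\sigma(t)=qt$ and $\mu(t)=(q-1)t$, and that for every rd-continuous $\eta$ and all $a,b\in\overline{q^{\mathbb{Z}}}$ the $\alpha$-conformable integral collapses to a $q$-series,
\[
\int_{a}^{b}\eta(t)\,\Delta_{\alpha}t=\int_{a}^{b}\eta(t)\,t^{\alpha-1}\,\Delta t=(q-1)\sum_{t=\log_{q}a}^{(\log_{q}b)-1}q^{\alpha t}\,\eta(q^{t}),
\]
where I used $q^{t}(q^{t})^{\alpha-1}=q^{\alpha t}$. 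I would then apply this identity, with $b\to\infty$, to both sides of \eqref{eq3}, together with the fact that the forward shift acts simply as $\psi^{\sigma}(q^{t})=\psi(q^{t+1})$, so that the $\sigma$-composed factors $k^{\sigma}$, $v^{\sigma}$ and $K^{\sigma}$ get evaluated at $q^{t+1}$ while $w$, $g$ and $H$ remain at $q^{t}$. The common prefactor $(q-1)$ cancels between the two sides, the multiplicative constant of \eqref{eq3} is transported without change, and what is left is precisely the pair of $q$-sums displayed in the corollary. Finally, inserting the same quantization rule into the definitions of $H$ from $S_7$ and of $K$ from $S_8$ yields $H(t)=(q-1)\sum_{s=\log_{q}t}^{\infty}g(q^{s})q^{\alpha s}$ and $K(t)=(q-1)\sum_{s=\log_{q}a}^{(\log_{q}t)-1}r(q^{s})f(q^{s})q^{\alpha s}$, exactly as stated.

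I do not expect a genuine obstacle, since this is a corollary in the literal sense; the only point that requires care is the combinatorial bookkeeping of indices. One has to use that $a\in\overline{q^{\mathbb{Z}}}$ so that $\log_{q}a\in\mathbb{Z}$, that a $\Delta$-integral over $[a,b)$ contributes the index range $\log_{q}a,\dots,(\log_{q}b)-1$ (whence the upper summation limit $\infty$), and one must consistently track which of the functions $f,g,k,r,v,w$, and which of the composite quantities, carry a $\sigma$ and are therefore evaluated at $q^{t+1}$ rather than at $q^{t}$. Once the summation bounds and the shifts are aligned, the inequality of Corollary~\ref{cor12} is read off directly from \eqref{eq3}.
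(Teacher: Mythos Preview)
Your approach is exactly the paper's: the corollary carries no separate proof there either, and is obtained by specializing Theorem~\ref{thm3} to $\mathbb{T}=\overline{q^{\mathbb{Z}}}$ via the formulas in \eqref{q^z}, precisely as you describe.

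One point of bookkeeping you should not gloss over: the exponents and the constant in the displayed inequality of the corollary are \emph{not} those of \eqref{eq3} transported without change. In \eqref{eq3} the left side carries $H^{\alpha-\gamma-1}(K^{\sigma})^{p-\alpha+1}$ and the constant is $\bigl(\tfrac{p-\alpha+\beta+1}{\alpha-\gamma+\theta}\bigr)^{p}$, whereas the corollary shows $H^{-\gamma}K^{p}$ and $\bigl(\tfrac{p+\beta}{1-\gamma+\theta}\bigr)^{p}$; the right side similarly drops the factor $(K^{\sigma})^{1-\alpha}$ and replaces $H^{p-\gamma+\alpha-1}$ by $H^{p-\gamma}$. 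These coincide only when $\alpha=1$. The paper follows the same convention in all of its $h\mathbb{Z}$, $\mathbb{Z}$ and $\overline{q^{\mathbb{Z}}}$ corollaries (compare Corollaries~\ref{cor2}, \ref{cor3}, \ref{cor6}, \ref{cor7}, \ref{cor8}, \ref{cor10}, \ref{cor11}), so your transcription should either keep the $\alpha$-dependent exponents from \eqref{eq3} or make explicit that the displayed form is the case $\alpha=1$. Apart from this, your plan is complete.
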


\begin{theorem}
\label{thm4}
Let $S_1$, $S_2$, $S_3$, $S_4$, $S_7$, $S_9$, $S_{13}$, 
and $S_{15}$ be satisfied. Then,
\begin{multline}
\label{eq4}
\int_{a}^{\infty}k(t)v(t)w^\sigma(t)g(t)
H^{\alpha-\gamma-1}(t)F^{p-\alpha+1}(t)\Delta_\alpha t\\ 
\leq\Big(\frac{p+\beta-\alpha+1}{\gamma-\theta-\alpha}\Big)^p
\int_{a}^{\infty}\frac{k(t)v(t)w^\sigma(t)r^p(t)f^p(t)H^{(1-\alpha+\gamma)(p-1)}(t)
 F^{(1-\alpha)/p}(t)}{g^{p-1}(t)\big(H^\sigma(t)\big)^{p(\gamma-\alpha)}}
\Delta_\alpha t.
\end{multline}
\end{theorem}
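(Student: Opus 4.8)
The plan is to follow the pattern of the proofs of Theorems~\ref{thm1}--\ref{thm3}, combining the $\alpha$-conformable integration by parts formula \eqref{parts}, Keller's chain rule \eqref{chain1}, the product rule \eqref{product}, and the $\alpha$-conformable H\"older inequality \eqref{holder}. First I would write the left-hand side of \eqref{eq4} as $\int_{a}^{\infty}\big(k(t)v(t)F^{p-\alpha+1}(t)\big)\,T_\alpha^{\Delta}u(t)\,\Delta_\alpha t$ with
\[
u(t)=\int_{a}^{t}w^{\sigma}(s)g(s)H^{\alpha-\gamma-1}(s)\,\Delta_\alpha s ,
\]
so that $T_\alpha^{\Delta}u(t)=w^{\sigma}(t)g(t)H^{\alpha-\gamma-1}(t)$ and $u(a)=0$. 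A point worth stressing is that, in contrast with Theorems~\ref{thm1} and \ref{thm3}, one must take this ``cumulative'' primitive rather than the ``tail'' primitive $-\int_{t}^{\infty}\cdots$: since $S_4$ together with $\alpha\le1$ forces $\gamma>\theta+1\ge\alpha$ (hence $\gamma-\theta-\alpha>0$), the function $H^{\alpha-\gamma}$ is unbounded as $t\to\infty$, and only $\int_{a}^{t}$ is admissible. Applying \eqref{parts} and noting that the boundary term $\big[k(t)v(t)F^{p-\alpha+1}(t)u(t)\big]_{a}^{\infty}$ vanishes ($u(a)=0$ and $F(\infty)=0$), one gets
\[
\int_{a}^{\infty}k(t)v(t)w^{\sigma}(t)g(t)H^{\alpha-\gamma-1}(t)F^{p-\alpha+1}(t)\,\Delta_\alpha t
=\int_{a}^{\infty}u^{\sigma}(t)\,T_\alpha^{\Delta}\!\big(-k(t)v(t)F^{p-\alpha+1}(t)\big)\,\Delta_\alpha t .
\]

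Next I would bound $u^{\sigma}$. Differentiating $w(s)H^{\alpha-\gamma}(s)$ via \eqref{product} (in the form $T_\alpha^\Delta(\eta\xi)=T_\alpha^\Delta\eta\cdot\xi+\eta^\sigma T_\alpha^\Delta\xi$, with $\eta=w$) and \eqref{chain1} gives, for some $c\in[s,\sigma(s)]$,
\[
T_\alpha^{\Delta}\big(w(s)H^{\alpha-\gamma}(s)\big)=T_\alpha^{\Delta}w(s)\,H^{\alpha-\gamma}(s)+(\alpha-\gamma)w^{\sigma}(s)H^{\alpha-\gamma-1}(c)\,T_\alpha^{\Delta}H(s).
\]
Since $T_\alpha^{\Delta}H(s)=-g(s)\le0$, since $H$ is nonincreasing (so $H(c)\le H(s)$ and, the exponents being negative, $H^{\alpha-\gamma-1}(c)\ge H^{\alpha-\gamma-1}(s)$), since $\gamma>\alpha$, and using $S_{13}$ in the form $T_\alpha^{\Delta}w(s)\,H^{\alpha-\gamma}(s)\ge-\theta w^{\sigma}(s)g(s)H^{\alpha-\gamma-1}(s)$, this yields
\[
T_\alpha^{\Delta}\big(w(s)H^{\alpha-\gamma}(s)\big)\ge(\gamma-\theta-\alpha)\,w^{\sigma}(s)g(s)H^{\alpha-\gamma-1}(s),
\]
whence $w^{\sigma}(s)g(s)H^{\alpha-\gamma-1}(s)\le\frac{1}{\gamma-\theta-\alpha}T_\alpha^{\Delta}\big(w(s)H^{\alpha-\gamma}(s)\big)$; integrating from $a$ to $\sigma(t)$ and discarding the nonnegative quantity $\frac{1}{\gamma-\theta-\alpha}w(a)H^{\alpha-\gamma}(a)$ gives
\[
u^{\sigma}(t)\le\frac{1}{\gamma-\theta-\alpha}\,w^{\sigma}(t)\big(H^{\sigma}(t)\big)^{\alpha-\gamma}.
\]

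For the other factor, expanding $T_\alpha^{\Delta}\!\big(-k(t)v(t)F^{p-\alpha+1}(t)\big)$ by \eqref{product} and \eqref{chain1}, dropping the term carrying $T_\alpha^{\Delta}k\ge0$, using $T_\alpha^{\Delta}F(t)=-r(t)f(t)\le0$ with $F$ nonincreasing and $p\ge1\ge\alpha$, and controlling the term containing $T_\alpha^{\Delta}v$ by $S_{15}$, I would reach, exactly as in \eqref{44},
\[
T_\alpha^{\Delta}\!\big(-k(t)v(t)F^{p-\alpha+1}(t)\big)\le(p+\beta-\alpha+1)\,k(t)v(t)r(t)f(t)F^{p-\alpha}(t).
\]
Inserting both estimates into the identity above (legitimate since $u^{\sigma}\ge0$) produces
\[
\int_{a}^{\infty}k v w^{\sigma}g\,H^{\alpha-\gamma-1}F^{p-\alpha+1}\,\Delta_\alpha t\le\frac{p+\beta-\alpha+1}{\gamma-\theta-\alpha}\int_{a}^{\infty}k v w^{\sigma}r f\,\big(H^{\sigma}\big)^{\alpha-\gamma}F^{p-\alpha}\,\Delta_\alpha t .
\]

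Finally I would write the right-hand integrand as the product of $\big(k v w^{\sigma}g\,H^{\alpha-\gamma-1}F^{p-\alpha+1}\big)^{(p-1)/p}$ and a remainder, and apply \eqref{holder} with conjugate exponents $p$ and $p/(p-1)$: the first factor raised to the power $p/(p-1)$ reproduces the integrand on the left of \eqref{eq4}, while the remainder raised to the power $p$ reproduces the integrand on the right of \eqref{eq4}, the exponents combining as $-(\alpha-\gamma-1)(p-1)=(1-\alpha+\gamma)(p-1)$ on $H$, $-p(\gamma-\alpha)$ on $H^{\sigma}$, $-(p-1)$ on $g$, and $1-\alpha$ on $F$. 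Dividing through by the finite factor $\big(\int_{a}^{\infty}k v w^{\sigma}g\,H^{\alpha-\gamma-1}F^{p-\alpha+1}\,\Delta_\alpha t\big)^{(p-1)/p}$ and raising to the power $p$ then yields \eqref{eq4}. The only genuinely delicate step is the sign bookkeeping in the bound for $u^{\sigma}$: one must use $S_4$ to know simultaneously that $\gamma-\theta-\alpha>0$ and that the exponents $\alpha-\gamma$ and $\alpha-\gamma-1$ are negative, and combine this with the monotonicity $H(c)\le H(s)$ of the tail $H$; everything else is the routine telescoping--splitting--H\"older scheme already used for Theorems~\ref{thm1}--\ref{thm3}.
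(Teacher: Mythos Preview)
Your proposal is correct and follows essentially the same route as the paper: the same choice of primitive $u(t)=\int_a^t w^\sigma g H^{\alpha-\gamma-1}\,\Delta_\alpha s$, the same bound $u^\sigma(t)\le\frac{1}{\gamma-\theta-\alpha}w^\sigma(t)(H^\sigma(t))^{\alpha-\gamma}$ obtained by differentiating $wH^{\alpha-\gamma}$ with $S_{13}$, the same estimate on $T_\alpha^\Delta(-kvF^{p-\alpha+1})$ via $S_{15}$, and the same H\"older splitting with exponents $p$ and $p/(p-1)$. Your exponent bookkeeping on $F$ (namely $1-\alpha$) in fact agrees with what the paper's own proof produces; the ``$/p$'' in the displayed statement \eqref{eq4} is a typographical slip in the paper.
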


\begin{proof}
Using \eqref{parts}, we get
\begin{multline}
\label{1111}
\int_{a}^{\infty}k(t)v(t)w^\sigma(t)g(t)
H^{\alpha-\gamma-1}(t)F^{p-\alpha+1}(t)\Delta_\alpha t\\
=\Big[u(t)k(t)v(t)F^{p-\alpha+1}(t)\Big]_a^\infty
+\int_{a}^{\infty}u^\sigma(t)T_{\alpha}^{\Delta} 
\Big(-k(t)v(t)F^{p-\alpha+1}(t)\Big) \Delta_\alpha t
\end{multline}
with
\begin{equation*}
u(t)=\int_{a}^{t}w^\sigma(s)g(s)
H^{\alpha-\gamma-1}(s)\Delta_\alpha s.
\end{equation*}
Now, using \eqref{chain1}, \eqref{product}, and $S_{13}$, it follows that
\begin{eqnarray*}
T_\alpha^{\Delta} \Big(w(s)H^{\alpha-\gamma}(s)\Big)
&=&w^\Delta(s)H^{\alpha-\gamma}(s)+w^\sigma(s) 
T_\alpha^{\Delta} \big(H^{\alpha-\gamma}(s)\big)\\
&\geq&\theta w^\sigma(s) T_\alpha^{\Delta} 
H(s)H^{\alpha-\gamma-1}(s)+(\alpha-\gamma)w^\sigma(s)
H^{\alpha-\gamma-1}(c) T_\alpha^{\Delta} H(s).
\end{eqnarray*}
Since $T_\alpha^{\Delta} H(s)=-g(s)\leq0$, 
$c\geq s$ and $\gamma>1$, we obtain that
\begin{eqnarray*}
T_\alpha^{\Delta} \Big(w(s)H^{\alpha-\gamma}(s)\Big)
&\geq&-\theta w^\sigma(s)g(s)H^{\alpha-\gamma-1}(s)
+(\gamma-\alpha)w^\sigma(s)g(s)H^{\alpha-\gamma-1}(s)\\
&=&(\gamma-\theta-\alpha)w^\sigma(s)g(s)H^{\alpha-\gamma-1}(s).
\end{eqnarray*}
This gives us that
\begin{equation*}
w^\sigma(s)g(s)H^{\alpha-\gamma-1}(s)
\leq\frac{1}{\gamma-\theta-\alpha} 
T_\alpha^{\Delta} \Big(w(s)H^{\alpha-\gamma}(s)\Big).
\end{equation*}
Therefore,
\begin{equation}
\label{2222}
\begin{split}
u^\sigma(t)=\int_{a}^{\sigma(t)}w^\sigma(s)g(s)H^{\alpha-\gamma-1}(s)\Delta_\alpha s
&\leq\frac{1}{\gamma-\theta-\alpha}\int_{a}^{\sigma(t)} 
T_\alpha^{\Delta} \Big(w(s)H^{\alpha-\gamma}(s)\Big)\Delta_\alpha s\\
&=\frac{1}{\gamma-\theta-\alpha}\Big(w^\sigma(t)\big(
H^\sigma(t)\big)^{\alpha-\gamma}-w(a)H^{\alpha-\gamma}(a)\Big)\\
&\leq\frac{1}{\gamma-\theta-\alpha}w^\sigma(t)\big(H^\sigma(t)\big)^{\alpha-\gamma}.
\end{split}
\end{equation}
Let $c\in[t,\sigma(t)]$. Then, using \eqref{product} and \eqref{chain1}, 
one has
\begin{eqnarray*}
T_\alpha^{\Delta} \Big(-k(t)v(t)F^p(t)\Big)
&=&- \Big( T_\alpha^{\Delta} \big(k(t)v(t)\big)\big(
F^\sigma(t)\big)^{p-\alpha+1}+k(t)v(t) 
T_\alpha^{\Delta} \big(F^{p-\alpha+1}(t)\big)\Big)\\
&=&-\Big( T_\alpha^{\Delta} k(t)v^\sigma(t)\big(
F^\sigma(t)\big)^{p-\alpha+1}+k(t) T_\alpha^{\Delta} 
v(t)\big(F^\sigma(t)\big)^{p-\alpha+1}\\
&& \quad \quad+(p-\alpha+1)k(t)v(t)
F^{p-\alpha}(c) T_\alpha^{\Delta}F(t)\Big).
\end{eqnarray*}
Since  $t\leq c$, $p > 1$, $0\leq T_\alpha^{\Delta} k(t)0$, 
$-r(t)f(t)=T_\alpha^{\Delta} F(t)\leq0$, and $S_{15}$, we get
\begin{equation}
\label{4444}
\begin{split}
T_\alpha^{\Delta} \Big(-k(t)v(t)F^{p-\alpha+1}(t)\Big)
&\leq\beta k(t)v(t)r(t)f(t)\big(F^\sigma(t)\big)^{p-\alpha}
+(p-\alpha+1)k(t)v(t)r(t)f(t)F^{p-\alpha}(t)\\
&\leq(p+\beta+\alpha-1)k(t)v(t)r(t)f(t)F^{p-\alpha}(t).
\end{split}
\end{equation}
From \eqref{1111}, \eqref{2222} and \eqref{4444}, 
we obtain that ($u(a) = 0$ and $F(\infty) = 0$)
\begin{eqnarray*}
&&\int_{a}^{\infty}k(t)v(t)w^\sigma(t)g(t)
H^{\alpha-\gamma-1}(t)F^{p-\alpha+1}(t)\Delta_\alpha t\\
&&\leq\frac{p+\beta+\alpha-1}{\gamma-\theta-\alpha}
\int_{a}^{\infty}k(t)v(t)w^\sigma(t)r(t)
f(t)\big(H^\sigma(t)\big)^{\alpha-\gamma}F^{p-\alpha}(t)\Delta_\alpha t,
\end{eqnarray*}
or, equivalently,
\begin{eqnarray*}
&&\int_{a}^{\infty}k(t)v(t)w^\sigma(t)g(t)
H^{\alpha-\gamma-1}(t)F^{p-\alpha+1}(t)\Delta_\alpha t\\
&&\leq\frac{p+\beta+\alpha-1}{\gamma-\theta-\alpha}
\int_{a}^{\infty}\bigg(\big(k(t)v(t)w^\sigma(t)g(t)\big)^{(p-1)/p}
H^{(\alpha-\gamma-1)(p-1)/p}(t)F^{(p-\alpha+1)(p-1)/p}(t)\bigg)\\
&&\quad\times\bigg(\frac{\big(k(t)v(t)w^\sigma(t)\big)^{1/p}
r(t)f(t)H^{(1-\alpha+\gamma)(p-1)/p}(t)F^{(1-\alpha)/p}(t)}{g^{(p-1)/p}(t)
\big(H^\sigma(t)\big)^{\gamma-\alpha}}\bigg)\Delta_\alpha t.
\end{eqnarray*}
Applying the dynamic H\"{o}lder inequality \eqref{holder} 
with indices $p$ and $p/(p-1)$, we get
\begin{eqnarray*}
&&\int_{a}^{\infty}k(t)v(t)w^\sigma(t)g(t)
H^{\alpha-\gamma-1}(t)F^{p-\alpha+1}(t)\Delta_\alpha t\\
&&\leq\frac{p+\beta-\alpha+1}{\gamma-\theta-\alpha}
\bigg(\int_{a}^{\infty}k(t)v(t)w^\sigma(t)g(t)
H^{\alpha-\gamma-1}(t)F^{p-\alpha+1}(t)\Delta_\alpha t\bigg)^{(p-1)/p}\\
&&\quad\times\bigg(\int_{a}^{\infty}\frac{k(t)v(t)
w^\sigma(t)r^p(t)f^p(t)H^{(1-\alpha+\gamma)(p-1)}(t)
F^{(1-\alpha)}(t)}{g^{p-1}(t)\big(H^\sigma(t)\big)^{p(\gamma-\alpha)}}
\Delta_\alpha t\bigg)^{1/p},
\end{eqnarray*}
which implies that
\begin{eqnarray*}
&&\int_{a}^{\infty}k(t)v(t)w^\sigma(t)g(t)
H^{\alpha-\gamma-1}(t)F^{p-\alpha+1}(t)\Delta_\alpha t\\
&&\leq\Big(\frac{p+\beta-\alpha+1}{\gamma-\theta-\alpha}\Big)^p
\int_{a}^{\infty}\frac{k(t)v(t)w^\sigma(t)
r^p(t)f^p(t)H^{(1-\alpha+\gamma)(p-1)}(t) 
F^{(1-\alpha)}(t)}{g^{p-1}(t)\big(H^\sigma(t)\big)^{p(\gamma-\alpha)}}
\Delta_\alpha t.
\end{eqnarray*}
The proof is complete.
\end{proof}	

\begin{corollary}
If one takes $\alpha=1$ in Theorem~\ref{thm4},  
then inequality \eqref{eq3} reduces to
\begin{multline}
\label{eq44}
\int_{a}^{\infty}v(t)k(t)w^\sigma(t)g(t)H^{-\gamma}(t)F^p(t)\Delta t\\
\leq\Big(\frac{p+\beta}{\gamma-\theta-1}\Big)^p\int_{a}^{\infty}
\frac{k(t)v(t)w^\sigma(t)r^p(t)f^p(t)
H^{\gamma(p-1)}(t)}{g^{p-1}(t)\big(H^\sigma(t)\big)^{p(\gamma-1)}}\Delta t
\end{multline}
with
\begin{equation*}
H(t)=\int_{t}^{\infty}g(s)\Delta s 
\quad and \quad  
F(t)=\int_{t}^{\infty}r(s)f(s)\Delta s, \quad t\in[a,\infty)_\mathbb{T},
\end{equation*}
which is Theorem~3.29 of \cite{dd1}.
\end{corollary}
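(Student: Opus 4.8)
The plan is to obtain this corollary as an immediate specialization of Theorem~\ref{thm4}, not to reprove it. Since the inequality \eqref{eq4} is already established for every $\alpha\in(0,1]$ under the hypotheses $S_1$, $S_2$, $S_3$, $S_4$, $S_7$, $S_9$, $S_{13}$, $S_{15}$, I would simply evaluate it at $\alpha=1$ and record how each ingredient collapses.

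First I would invoke the time-scale relations of Section~\ref{sec:2}: for $\alpha=1$ one has $t^{1-\alpha}=1$, so $(\eta)^{\Delta_\alpha}(t)=(\eta)^{\Delta}(t)$ and $\int_a^b\eta(t)\Delta_\alpha t=\int_a^b\eta(t)\Delta t$. Consequently every $\Delta_\alpha$-integral in \eqref{eq4} becomes an ordinary $\Delta$-integral, and the conformable primitives defined in $S_7$ and $S_9$ reduce to $H(t)=\int_t^{\infty}g(s)\Delta s$ and $F(t)=\int_t^{\infty}r(s)f(s)\Delta s$, exactly the functions displayed in the corollary.

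Next I would track the exponents and the constant. Putting $\alpha=1$ gives $H^{\alpha-\gamma-1}=H^{-\gamma}$, $F^{p-\alpha+1}=F^{p}$, $H^{(1-\alpha+\gamma)(p-1)}=H^{\gamma(p-1)}$, and $\big(H^\sigma\big)^{p(\gamma-\alpha)}=\big(H^\sigma\big)^{p(\gamma-1)}$; the constant collapses as $\frac{p+\beta-\alpha+1}{\gamma-\theta-\alpha}=\frac{p+\beta}{\gamma-\theta-1}$, which is positive since $S_4$ guarantees $\gamma>\theta+1$. The only point deserving a moment's attention is the trailing factor $F^{1-\alpha}$, which becomes $F^{0}=1$ and therefore disappears from the right-hand side.

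Substituting all of these into \eqref{eq4} yields precisely \eqref{eq44}, after which I would identify the resulting inequality with Theorem~3.29 of \cite{dd1}. There is no genuine obstacle here: the argument is purely the evaluation of the parameter $\alpha$ at $1$ followed by algebraic simplification, and the only things worth flagging explicitly are the reduction of $\Delta_\alpha$ to $\Delta$ and the vanishing of the $F^{1-\alpha}$ factor.
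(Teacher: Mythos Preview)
Your proposal is correct and matches the paper's approach: the corollary is stated there without proof, as an immediate specialization of Theorem~\ref{thm4} at $\alpha=1$, and your explicit tracking of how each exponent, the constant, and the $\Delta_\alpha$-integrals collapse is exactly the intended (and only) argument. The one cosmetic point is that in the stated inequality~\eqref{eq4} the trailing factor is written $F^{(1-\alpha)/p}$ rather than $F^{1-\alpha}$, but this makes no difference since both vanish at $\alpha=1$.
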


\begin{remark}
Under assumptions $S_{16}$, $S_{17}$, and $S_{19}$,
\eqref{eq4} of Theorem~\ref{thm4} gives
\begin{multline} 
\label{556}
\int_{a}^{\infty}g(t)H^{\alpha-\gamma-1}(t)F^{p-\alpha+1}(t)\Delta_\alpha t\\
\leq\Big(\frac{p-\alpha+1}{\gamma-\alpha}\Big)^p\int_{a}^{\infty}
\frac{g(t)f^p(t)H^{(1-\alpha+\gamma)(p-1)}(t) 
F^{(1-\alpha)}(t)}{\big(H^\sigma(t)\big)^{p(\gamma-\alpha)}}\Delta_\alpha t.
\end{multline}
\end{remark}

\begin{remark}
If we take $\alpha=1$ in inequality \eqref{556},  
then we obtain \eqref{saker4}.
\end{remark}

Now, as special cases of our results, we give continuous, discrete, and quantum 
$\alpha$-conformable inequalities. Namely, the following results are obtained 
by choosing $\mathbb{T}$ as time scales $\mathbb{T}=\mathbb{R}$, $\mathbb{T}=h\mathbb{Z}$, 
$\mathbb{T}=\mathbb{Z}$ and $\mathbb{T}=\overline{q^\mathbb{Z}}$.

\begin{corollary}
\label{cor13}
Putting  $\mathbb{T}=\mathbb{R}$ in Theorem \ref{thm4}, 
then inequality \eqref{eq4} becomes
\begin{multline}
\label{eqcor13f}
\int_{a}^{\infty}k(t)v(t)w(t)g(t)H^{\alpha-\gamma-1}(t)F^{p-\alpha+1}(t) t^{\alpha-1} dt\\
\leq\Big(\frac{p+\beta-\alpha+1}{\gamma-\theta-\alpha}\Big)^p
\int_{a}^{\infty}\frac{k(t)v(t)w(t)r^p(t)f^p(t)H^{p-\gamma+\alpha-1}(t) 
F^{1-\alpha}(t)}{g^{p-1}(t)} t^{\alpha-1} dt,
\end{multline}
where
\begin{equation*}
H(t)=\int_{t}^{\infty}g(s)s^{\alpha-1} ds 
\quad \text{and} \quad  
F(t)=\int_{t}^{\infty}r(s)f(s) s^{\alpha-1} ds.
\end{equation*}
\end{corollary}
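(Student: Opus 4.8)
The plan is to specialize Theorem~\ref{thm4} to the time scale $\mathbb{T}=\mathbb{R}$, using the calculus identities recalled in Section~\ref{sec:2}. First I would use that on $\mathbb{T}=\mathbb{R}$ one has $\sigma(t)=t$ and $\mu(t)=0$ by \eqref{r}, together with the general relation $\int_{a}^{b}\eta(t)\,\Delta_{\alpha}t=\int_{a}^{b}\eta(t)\,t^{\alpha-1}\,\Delta t$ recorded in Section~\ref{sec:2}; combining the latter with $\int_{a}^{b}\eta(t)\,\Delta t=\int_{a}^{b}\eta(t)\,dt$ gives $\int_{a}^{b}\eta(t)\,\Delta_{\alpha}t=\int_{a}^{b}\eta(t)\,t^{\alpha-1}\,dt$. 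Consequently every $\sigma$-shifted quantity appearing in \eqref{eq4} collapses to the unshifted one, in particular $H^{\sigma}(t)=H(t)$ and $w^{\sigma}(t)=w(t)$.

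Next, since the hypotheses $S_1$, $S_2$, $S_3$, $S_4$, $S_7$, $S_9$, $S_{13}$, $S_{15}$ of Theorem~\ref{thm4} are in force, inequality \eqref{eq4} holds; applying it with $\mathbb{T}=\mathbb{R}$ and replacing $H^{\sigma}$ by $H$ and $w^{\sigma}$ by $w$, its left-hand side becomes $\int_{a}^{\infty}k(t)v(t)w(t)g(t)H^{\alpha-\gamma-1}(t)F^{p-\alpha+1}(t)\,\Delta_\alpha t$, which by the integral identity above equals $\int_{a}^{\infty}k(t)v(t)w(t)g(t)H^{\alpha-\gamma-1}(t)F^{p-\alpha+1}(t)\,t^{\alpha-1}\,dt$, i.e.\ the left-hand side of \eqref{eqcor13f}.

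For the right-hand side the only genuine step is to merge the power of $H$ in the numerator with the power of $H^{\sigma}=H$ in the denominator: the combined exponent is $(1-\alpha+\gamma)(p-1)-p(\gamma-\alpha)$, which I would expand to $p-1+\alpha-\gamma$, that is, $p-\gamma+\alpha-1$. Hence the right-hand side of \eqref{eq4} becomes $\big(\tfrac{p+\beta-\alpha+1}{\gamma-\theta-\alpha}\big)^{p}\int_{a}^{\infty}\frac{k(t)v(t)w(t)r^{p}(t)f^{p}(t)H^{p-\gamma+\alpha-1}(t)F^{1-\alpha}(t)}{g^{p-1}(t)}\,t^{\alpha-1}\,dt$, which is exactly \eqref{eqcor13f}; the claimed representations of $H$ and $F$ follow the same way from the integral identity applied to $S_7$ and $S_9$. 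There is essentially no obstacle here, the argument being purely mechanical, and the one spot where a slip could creep in is the exponent bookkeeping for $H$, so I would double-check once more that $(1-\alpha+\gamma)(p-1)-p(\gamma-\alpha)=p-\gamma+\alpha-1$.
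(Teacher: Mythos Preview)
Your proposal is correct and follows exactly the same route as the paper, which offers no separate proof for this corollary but simply records it as the immediate specialization of Theorem~\ref{thm4} to $\mathbb{T}=\mathbb{R}$ via the identities in \eqref{r} and the relation $\int_a^b\eta(t)\,\Delta_\alpha t=\int_a^b\eta(t)\,t^{\alpha-1}\,dt$. Your exponent bookkeeping for $H$ is right; note only that the exponent of $F$ on the right-hand side of \eqref{eq4} is stated as $(1-\alpha)/p$ but should read $1-\alpha$ (as the proof of Theorem~\ref{thm4} itself concludes), which is what the corollary uses.
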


\begin{remark}
Under hypotheses $S_{16}$, $S_{17}$, $S_{19}$ and $S_{20}$,
inequality \eqref{eqcor13f} of Corollary~\ref{cor13} gives
\begin{multline}
\label{eqcor133}
\int_{0}^{\infty}g(t)H^{\alpha-\gamma-1}(t)
F^{p-\alpha+1}(t) t^{\alpha-1} dt\\
\leq\Big(\frac{p-\alpha+1}{\gamma-\alpha}\Big)^p
\int_{0}^{\infty}g(t)f^p(t)H^{p-\gamma+\alpha-1}(t) F^{1-\alpha}(t)
t^{\alpha-1} dt.
\end{multline}
\end{remark}

\begin{remark}
If we take $\alpha=1$ in inequality \eqref{eqcor133}, then one gets
\begin{equation*}
\int_{0}^{\infty}\frac{g(t)\Big(\int_{t}^{\infty}
g(s)f(s)ds\Big)^p}{\Big(\int_{t}^{\infty}g(s)ds\Big)^\gamma}dt
\leq\Big(\frac{p}{\gamma-1}\Big)^p\int_{0}^{\infty}g(t)f^p(t)
\Big(\int_{t}^{\infty}g(s)f(s)ds\Big)^{p-\gamma}dt,
\end{equation*}
which is the continuous analogous of Bennett's inequality \eqref{h31}.
\end{remark}

\begin{corollary}
\label{cor14}
Choosing $\mathbb{T}=h\mathbb{Z}$ in Theorem~\ref{thm4},  
inequality \eqref{eq4} gives that
\begin{multline}
\sum_{t=\frac{a}{h}}^{\infty}k(ht)v(ht)w(ht+h)g(ht)H^{-\gamma}(ht)F^p(ht)t^{\alpha-1}\\
\leq\Big(\frac{p+\beta}{\gamma-\theta-1}\Big)^p
\sum_{t=\frac{a}{h}}^{\infty}\frac{k(ht)v(ht)w(ht+h)r^p(ht)f^p(ht)H^{\gamma(p-1)}(ht)}
{g^{p-1}(t)H^{p(\gamma-1)}(ht+h)}t^{\alpha-1},
\end{multline}
where
\begin{equation*}
H(t)=h^{\alpha}\sum_{s=\frac{t}{h}}^{\infty}g(hs)s^{\alpha-1} 
\quad \text{and} \quad  
F(t)=h^{\alpha}\sum_{s=\frac{t}{h}}^{\infty}r(hs)f(hs)s^{\alpha-1}.
\end{equation*}
\end{corollary}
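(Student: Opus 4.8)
The plan is to obtain Corollary~\ref{cor14} by specializing Theorem~\ref{thm4} to the discrete time scale $\mathbb{T}=h\mathbb{Z}$, translating every ingredient of inequality \eqref{eq4} through the identities recorded in Section~\ref{sec:2}. First I would combine relation~(i), $\int_{a}^{b}\phi(t)\,\Delta_{\alpha}t=\int_{a}^{b}\phi(t)\,t^{\alpha-1}\,\Delta t$, with the $h\mathbb{Z}$ rule $\int_{a}^{b}\psi(t)\,\Delta t=\sum_{t=a/h}^{b/h-1}h\,\psi(ht)$ from \eqref{hz}, so that any $\alpha$-conformable integral over $[a,\infty)_{\mathbb{T}}$ occurring in \eqref{eq4} becomes the series $\sum_{t=a/h}^{\infty}h\,(ht)^{\alpha-1}\phi(ht)$. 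I would also note that on $h\mathbb{Z}$ one has $\sigma(t)=t+h$, so that the $\sigma$-superscripts on $w$, $H$ and $F$ simply shift the argument by $h$.

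Next I would write out the two auxiliary functions. By $S_{7}$ and $S_{9}$, $H(\varsigma)=\int_{\varsigma}^{\infty}g\,\Delta_{\alpha}$ and $F(\varsigma)=\int_{\varsigma}^{\infty}rf\,\Delta_{\alpha}$, and the reduction above gives
\[
H(t)=h^{\alpha}\sum_{s=t/h}^{\infty}g(hs)\,s^{\alpha-1},
\qquad
F(t)=h^{\alpha}\sum_{s=t/h}^{\infty}r(hs)f(hs)\,s^{\alpha-1},
\]
which are precisely the expressions displayed in the statement. Substituting these into \eqref{eq4}, evaluating $g(t)$, $k^{\sigma}(t)$, $v^{\sigma}(t)$, $w^{\sigma}(t)$, $r(t)$, $f(t)$ at the lattice points $ht$, and then collecting the powers of $h$ coming from the measure $h\,(ht)^{\alpha-1}$ on each side, one is left with the asserted two-sided sum inequality with the corresponding constant.

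Since Theorem~\ref{thm4} has already been proved under the hypotheses $S_{1}$, $S_{2}$, $S_{3}$, $S_{4}$, $S_{7}$, $S_{9}$, $S_{13}$, $S_{15}$ --- exactly the ones in force here --- there is no new analytic content, and the argument reduces to careful bookkeeping. The one place that demands attention is the correct handling of the forward shift $\sigma(t)=t+h$ inside the nested sums defining $H$ and $F$ at the points $ht$, and the balancing of the powers of $h$ on the two sides of the inequality so that they may be dropped; everything else is substitution.
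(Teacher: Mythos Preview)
Your proposal is correct and matches the paper's approach exactly: the paper gives no separate proof for this corollary, treating it as an immediate specialization of Theorem~\ref{thm4} to $\mathbb{T}=h\mathbb{Z}$ via the identities in \eqref{hz} and item~(i) of Section~\ref{sec:2}, which is precisely what you describe.
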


\begin{corollary}
\label{cor15}
In the particular case $h=1$, that is, in the discrete 
time scale $\mathbb{T}=\mathbb{Z}$, Corollary~\ref{cor14} reduces to
\begin{multline}
\label{eqcor15}
\sum_{t=a}^{\infty}k(t)v(t)w(t+1)g(t)H^{-\gamma}(t)F^p(t)t^{\alpha-1}\\
\leq\Big(\frac{p+\beta}{\gamma-\theta-1}\Big)^p\sum_{t=a}^{\infty}
\frac{k(t)v(t)w(t+1)r^p(t)f^p(t) H^{\gamma(p-1)}(t)}{g^{p-1}(t)
H^{p(\gamma-1)}(t+1)}t^{\alpha-1},
\end{multline}
where
\begin{equation*}
H(t)=\sum_{s=t}^{\infty}g(s)s^{\alpha-1} 
\quad \text{and} \quad  
F(t)=\sum_{s=t}^{\infty}f(s)r(s)s^{\alpha-1}.
\end{equation*}
\end{corollary}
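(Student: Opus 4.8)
The plan is to obtain \eqref{eqcor15} as the particular instance $h=1$ of Corollary~\ref{cor14}, which is itself the specialization of the master inequality \eqref{eq4} of Theorem~\ref{thm4} to the lattice time scale $\mathbb{T}=h\mathbb{Z}$. Hence no new estimate is required: the argument reduces to setting $h=1$ and rewriting the resulting discrete $\alpha$-conformable integrals as ordinary series.

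First I would put $h=1$ in the structural relations \eqref{hz}, which then collapse to \eqref{z}: the forward jump becomes $\sigma(t)=t+1$, the graininess $\mu(t)=1$, and the delta integral becomes a sum, $\int_{a}^{b}\eta(t)\,\Delta t=\sum_{t=a}^{b-1}\eta(t)$. Composing this with the general conversion $\int_{a}^{b}\eta(t)\,\Delta_\alpha t=\int_{a}^{b}\eta(t)t^{\alpha-1}\,\Delta t$ recalled in Section~\ref{sec:2}, every $\alpha$-conformable integral over $[a,\infty)_{\mathbb{Z}}$ turns into the weighted series $\sum_{t=a}^{\infty}\eta(t)t^{\alpha-1}$. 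Applying this to Corollary~\ref{cor14}, whose $h$-lattice sums $\sum_{t=a/h}^{\infty}(\cdots)t^{\alpha-1}$ become $\sum_{t=a}^{\infty}(\cdots)t^{\alpha-1}$ at $h=1$, produces the two series on the left- and right-hand sides of \eqref{eqcor15}, while each factor evaluated at $\sigma(t)$ — written with argument $ht$ or $ht+h$ in Corollary~\ref{cor14} — is replaced by its value at $t$ or $t+1$, respectively.

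Next I would reduce the auxiliary functions. Setting $h=1$ in the lattice definitions $H(t)=h^{\alpha}\sum_{s=t/h}^{\infty}g(hs)s^{\alpha-1}$ and the analogous $F$ from Corollary~\ref{cor14} yields precisely $H(t)=\sum_{s=t}^{\infty}g(s)s^{\alpha-1}$ and $F(t)=\sum_{s=t}^{\infty}f(s)r(s)s^{\alpha-1}$, i.e. the functions displayed beneath \eqref{eqcor15}; the constant $\big((p+\beta)/(\gamma-\theta-1)\big)^p$ does not involve $h$ and is carried over unchanged. Collecting these substitutions gives \eqref{eqcor15}.

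I do not expect a genuine obstacle: the statement is an immediate corollary. The only point demanding a little care is the bookkeeping of the argument shifts, keeping every $\sigma$-evaluated quantity consistently at $t+1$ and checking that the summation index legitimately starts at $t=a$ — which it does, since under $S_2$ the sequences are defined on all of $[a,\infty)_{\mathbb{Z}}$ and the tail sums defining $H$ and $F$ converge wherever the hypotheses of Theorem~\ref{thm4} hold. Once the shifts are tracked correctly, \eqref{eqcor15} follows verbatim.
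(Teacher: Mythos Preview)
Your proposal is correct and matches the paper's approach exactly: the corollary is stated as the immediate $h=1$ specialization of Corollary~\ref{cor14}, with the $\sigma$-shifts becoming $t\mapsto t+1$ and the $h$-lattice sums collapsing to ordinary series weighted by $t^{\alpha-1}$. No additional argument is needed beyond the bookkeeping you describe.
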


\begin{remark}
With $S_{16}$, $S_{17}$, $S_{19}$ and $S_{21}$,
then \eqref{eqcor15} of Corollary \ref{cor15}
gives a different form of inequality \eqref{h31}.
\end{remark}

\begin{corollary}
\label{cor16}
If $\mathbb{T}=\overline{q^\mathbb{Z}}$ in Theorem~\ref{thm4}, 
it follows from \eqref{q^z} and \eqref{eq4} that
\begin{multline}
\sum_{t=\log_q a}^{\infty}k(q^t)v(q^t)w(q^{t+1})
g(q^t)H^{-\gamma}(q^t)F^p(q^t)q^{\alpha t}\\
\leq\Big(\frac{p+\beta}{\gamma-\theta-1}\Big)^p
\sum_{t=\log_q a}^{\infty}\frac{k(q^t)v(q^t)w(q^{t+1})r^p(q^t)f^p(q^t)
H^{\gamma(p-1)}(q^t)}{g^{p-1}(q^t)H^{p(\gamma-1)}(q^{t+1})}q^{\alpha t},
\end{multline}
where
\begin{equation*}
H(t)=(q-1)\sum_{s=\log_q t}^{\infty}g(q^s)q^{\alpha s} 
\quad \text{and} \quad  
F(t)=(q-1)\sum_{s=\log_q t}^{\infty} r(q^s)f(q^s)q^{\alpha s}.
\end{equation*}
\end{corollary}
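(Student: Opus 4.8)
The plan is to obtain Corollary~\ref{cor16} purely by specialization of the general dynamic inequality \eqref{eq4} of Theorem~\ref{thm4} to the quantum time scale $\mathbb{T}=\overline{q^{\mathbb{Z}}}$; no new analytic estimate is required, only a careful translation of the $\alpha$-conformable delta-integral into a $q$-series. First I would record the three structural facts that drive the computation on $\overline{q^{\mathbb{Z}}}$: the forward jump is multiplicative, $\sigma(t)=qt$, so that $\sigma(q^{t})=q^{t+1}$ and hence $w^{\sigma}(q^{t})=w(q^{t+1})$ and $H^{\sigma}(q^{t})=H(q^{t+1})$; the $\alpha$-conformable integral reduces to an ordinary delta integral through $\int_{a}^{\infty}\eta(t)\Delta_{\alpha}t=\int_{a}^{\infty}\eta(t)t^{\alpha-1}\Delta t$; and the delta integral itself is the $q$-series given in \eqref{q^z}, namely $\int_{a}^{\infty}\eta(t)\Delta t=(q-1)\sum_{t=\log_{q}a}^{\infty}q^{t}\eta(q^{t})$.

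Next I would apply these substitutions to both sides of \eqref{eq4} simultaneously. Composing the two integral conversions turns each side into $(q-1)\sum_{t=\log_{q}a}^{\infty}q^{t}(q^{t})^{\alpha-1}[\,\cdot\,]_{s=q^{t}}$, and the weight collapses cleanly because $q^{t}(q^{t})^{\alpha-1}=q^{\alpha t}$; this is precisely the factor $q^{\alpha t}$ appearing in the statement. The common constant $(q-1)$ multiplies both sides and therefore cancels, which is why it is absent from the final inequality. Evaluating the sampled integrand then amounts to replacing every free $t$ by $q^{t}$ and every jumped quantity by its shift: $k(t)\mapsto k(q^{t})$, $v(t)\mapsto v(q^{t})$, $g(t)\mapsto g(q^{t})$, $w^{\sigma}(t)\mapsto w(q^{t+1})$, and $H^{\sigma}(t)\mapsto H(q^{t+1})$ in the denominator factor $\big(H^{\sigma}\big)^{p(\gamma-\alpha)}$, which reproduces the two sums displayed in the corollary.

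Finally I would identify the two accumulation functions. Applying the same $q$-series conversion to the defining $\alpha$-conformable integrals of $H$ and $F$ from $S_{7}$ and $S_{9}$ yields $H(t)=(q-1)\sum_{s=\log_{q}t}^{\infty}g(q^{s})q^{\alpha s}$ and $F(t)=(q-1)\sum_{s=\log_{q}t}^{\infty}r(q^{s})f(q^{s})q^{\alpha s}$, exactly as stated beneath the quantum inequality; here the lower summation limit $\log_{q}t$ records that $H$ and $F$ are tail integrals over $[t,\infty)$. The only point demanding genuine care is the bookkeeping of the forward-jump shifts: one must consistently read $\sigma(q^{t})=q^{t+1}$ both in $w^{\sigma}$ and in the jumped factor $H^{\sigma}$ of the right-hand side, since a single mis-indexed shift would corrupt the exponent structure of the $H$-powers. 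Once these shifts are tracked correctly, the claimed inequality follows term-by-term from \eqref{eq4}, completing the proof.
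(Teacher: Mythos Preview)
Your proposal is correct and follows exactly the same approach as the paper: the corollary is obtained by direct specialization of Theorem~\ref{thm4} to $\mathbb{T}=\overline{q^{\mathbb{Z}}}$ using the substitution rules \eqref{q^z}, with no additional analytic input. The paper gives no separate proof beyond the phrase ``it follows from \eqref{q^z} and \eqref{eq4}'', and your write-up simply makes that specialization explicit.
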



\section{Conclusion}
\label{sec:4}

Hardy-type inequalities have many applications
and are subject to strong research: see the books
\cite{MR3969969,MR3676556,MR3561405,MR1982932}
and the recent publications \cite{MR4414114,MR4410837,MR4409663}.
In this manuscript, by employing the 
$\alpha$-conformable fractional calculus on time scales
of Benkhettou et al. \cite{sdm31}, several new 
Hardy-type inequalities were proved. The results extend several 
dynamic inequalities known in the literature, being new even
in the discrete, continuous and quantum settings.


\subsection*{Acknowledgments}

Torres was supported by FCT under project UIDB/04106/2020 (CIDMA).


\subsection*{Data Availability Statement}

The authors declare that all data supporting 
the findings of this study are available within the article.


\subsection*{Conflict of Interest Statement}

The Authors declare that there is no conflict of interest.


\bibliographystyle{abbrv}
\bibliography{El-Deeb_Makharesh_Torres}


\end{document}